\documentclass[10pt]{amsart}

\usepackage{tikz-cd}

\usepackage[english]{babel}

\usepackage[letterpaper,top=2cm,bottom=2cm,left=3cm,right=3cm,marginparwidth=1.75cm]{geometry}

\usepackage{amsmath}
\usepackage{graphicx}
\usepackage{hyperref}
\usepackage{mathtools}
\usepackage{amsmath,arydshln,multirow}
\usepackage[alphabetic]{amsrefs}
\usepackage{arydshln}
\usepackage{cases}
\usepackage{amsmath}
\usepackage{amsfonts}
\usepackage{bm}
\usepackage{arydshln}
\usepackage{amsfonts,amsmath,amssymb,amscd,bbm,amsthm,mathrsfs,dsfont}
\usepackage{mathrsfs}
\usepackage{pb-diagram}
\usepackage{amssymb}
\usepackage[all,cmtip]{xy}

\usepackage{tikz}

\newtheorem{theorem}{Theorem}[section]

\newtheorem{proposition}[theorem]{Proposition}
\newtheorem{lemma}[theorem]{Lemma}

\theoremstyle{definition}
\newtheorem{definition}[theorem]{Definition}
\newtheorem{example}[theorem]{Example}
\newtheorem{proposition-definition}[theorem]{Proposition-Definition}
\newtheorem{definition-theorem}[theorem]{Definition-Theorem}

\newtheorem{corollary}[theorem]{Corollary}

\theoremstyle{remark}
\newtheorem{remark}[theorem]{Remark}

\newtheorem{question}[theorem]{Question}
\numberwithin{equation}{section}

\def\xx{\mathbf{x}}

\def\TT{\mathbb{T}}

\def\Acal{\mathcal{A}}

\def\mod{\opname{mod}\nolimits}

\newcommand{\opname}[1]{\operatorname{\mathsf{#1}}}
\newcommand{\Gr}{\opname{Gr}\nolimits}

\newcommand{\End}{\opname{End}}

\newcommand{\Ext}{\opname{Ext}}

\newcommand{\Fac}{\opname{Fac}}
\newcommand{\Hom}{\opname{Hom}}
\newcommand{\dom}{\opname{dom}}
\newcommand{\supp}{\opname{supp}}

\newcommand{\Sub}{\opname{Sub}}
\newcommand{\add}{\opname{add}\nolimits}

\newcommand{\confer}{\emph{cf.}\ }

\begin{document}

\title[Newton polytopes in cluster algebras and $\tau$-tilting theory]{Newton polytopes in cluster algebras and $\tau$-tilting theory}


\author{Peigen Cao}
\address{School of Mathematical Sciences, University of Science and Technology of China, Hefei, 230026, People's Republic of China
}
\email{peigencao@126.com}


\dedicatory{}

\subjclass[2020]{13F60, 16G20}

\date{}

\keywords{}

\begin{abstract}
We prove that the cluster monomials in non-initial cluster variables are uniquely determined by the Newton polytopes of their $F$-polynomials for  skew-symmetrizable cluster algebras. Accordingly, we prove that the $\tau$-rigid modules and the left finite multi-semibricks in $\tau$-tilting theory are uniquely determined by the Newton polytopes of these modules. The key tools used in the proofs are the left Bongartz completion, $F$-invariant and partial $F$-invariant in the context of cluster algebras and $\tau$-tilting theory.
\end{abstract}

\maketitle


\tableofcontents

\section{Introduction}
Cluster algebras are a class of commutative algebras equipped with an extra combinatorial structure introduced by Fomin and Zelevinsky \cite{fz_2002}. Such algebras are  generated by a special set of
generators, called {\em cluster variables}, which are grouped into overlapping subsets of fixed size,
called {\em clusters}. A {\em seed} is a pair consisting of a cluster ${\bf x}=(x_1,\ldots,x_n)$ and a skew-symmetrizable integer matrix $B$.  New seeds can be obtained from a given one by a procedure called {\em mutation}.
The sets of cluster variables and clusters of a cluster algebra are determined by an initial seed $({\bf x}, B)$ and iterative mutations.
A {\em cluster monomial} is a monomial in cluster variables from the same cluster. Cluster monomials are the central objects to study in cluster algebras.

The {\em $g$-vectors} and {\em $F$-polynomials} are introduced in \cite{fomin_zelevinsky_2007} to study the cluster monomials of a cluster algebra in terms of the initial seed $({\bf x}, B)$. More precisely, each cluster monomial $u$ can be written as 
\[u={\bf x}^{{\bf g}_u}\cdot F_u(\hat y_1,\ldots,\hat y_n)\;\;\;\in\mathbb Z[x_1^{\pm 1},\ldots,x_{n}^{\pm 1}],
\]
where ${\bf g}_u\in\mathbb Z^n$ and $F_u\in\mathbb Z[y_1,\ldots,y_n]$ are the $g$-vector and $F$-polynomial of $u$, and  each $\hat y_k$ is a Laurent monomial in $x_1,\ldots,x_n$. In particular, the $g$-vectors of the initial cluster variables $x_1,\ldots,x_n$ are given by the columns of $I_n$ and the $F$-polynomials of $x_1,\ldots,x_n$ are the constant $1$. 

Fomin and Zelevinsky \cite{fomin_zelevinsky_2007} conjectured that different cluster monomials have different $g$-vectors, which has been confirmed by Derksen, Weyman, and Zelevinsky \cite{DWZ10} for skew-symmetric cluster algebras and by 
 Gross,  Hacking,  Keel, and Kontsevich for skew-symmetrizable cluster algebras.

$\tau$-tilting theory was introduced by Adachi, Iyama, and Reiten \cite{air_2014}, which completes the classic tilting theory from the viewpoint of mutations. Various of the fundamental concepts in
cluster algebras, such as clusters/seeds, mutations, $g$-vectors, $F$-polynomials etc., were generalized to $\tau$-tilting theory, \confer \cite{air_2014,fei_2019a,CGY-2023,Cao-2023}.   Table \ref{table} summarizes the correspondence between cluster algebras and $\tau$-tilting theory.
\begin{table}[ht!]
\begin{equation*}
\begin{array}{|c|c|}
\hline
 &
\\[-3mm]
\text{Cluster algebras}& \hspace{2mm} \tau\text{-tilting theory}
\\[2mm]
\hline
 &
\\[-3mm]
\text{Seeds}& \hspace{2mm} \text{Basic } \tau\text{-tilting pairs}
\\[2mm]
\hline
 &
\\[-2mm]
\text{Green/ red mutations of seeds}& \hspace{2mm} \text{Right/ left mutations of } \tau\text{-tilting pairs}
\\[2mm]
\hline
 &
\\[-2mm]
\text{Initial cluster variables } & \hspace{2mm} (0,P_1),\ldots,(0,P_n)
\\[2mm]
\hline
 &
\\[-3mm]
\text{Non-initial  cluster variables}& \hspace{2mm} \text{Indecomposable }\tau\text{-rigid modules}
\\[2mm]
\hline
 &
\\[-3mm]
\text{Cluster monomials}& \hspace{2mm} \tau\text{-rigid pairs}
\\[2mm]
\hline
 &
\\[-3mm]
g\text{-vectors, }F\text{-polynomials}& \hspace{2mm} g\text{-vectors, }F\text{-polynomials}
\\[2mm]
\hline
&
\\[-3mm]
F\text{-invariant for cluster monomials}& \hspace{2mm} F\text{-invariant and $E$-invariant for }\tau\text{-rigid pairs}
\\[2mm]
\hline
 &
\\[-3mm]
\text{Left/ right Bongartz completion}& \hspace{2mm} \text{Left/ right Bongartz completion}
\\[2mm]
\hline
 &
\\[-3mm]
\text{Dominant sets for seeds}& \hspace{2mm} \text{Torsion classes for $\tau$-tilting pairs}
\\[2mm]
\hline
\end{array}
\end{equation*}
\caption{Cluster algebras vs. $\tau$-tilting theory \label{table}}
\end{table}

It is known that $\tau$-rigid pairs in $\tau$-tilting theory play the role of cluster monomials in cluster algebras. Adachi, Iyama, and Reiten \cite{air_2014} proved that different $\tau$-rigid pairs have different $g$-vectors, which is analogous to the result that different cluster monomials have different $g$-vectors in cluster algebras.

Before discussing the motivation of this paper, we recall the definition of Newton polytopes for polynomials and modules, which will be used throughout. Throughout, let $A$ be a finite dimensional basic algebra over an algebraically closed field $\mathbf{k}$.

\begin{definition}[Newton polytopes of polynomials and modules] \label{def:polytope}
(i) Let $F({\bf y})=\sum_{{\bf v}\in\mathbb N^n}c_{\bf v}{\bf y}^{\bf v}\in\mathbb Z[y_1,\ldots,y_n]$ be a non-zero polynomial. Its {\em Newton polytope} $\mathsf P(F)$ is defined to be the convex hull of the finite set $\{{\bf v}\in\mathbb N^n\mid c_{\bf v}\neq 0\}$.

(ii) The {\em Newton polytope} $\mathsf P(M)$ of a module $M\in\mod A$ is defined to be the convex hull of the dimension vectors of the quotient modules of $M$.
\end{definition}

Recently, there has been growing interest in the study of $F$-polynomials. For example, Jiarui Fei \cite{fei_2019a,fei_2019b} studied the tropical $F$-polynomials of modules. In particular, he proved that  the Newton polytope of the $F$-polynomial $F_M\in\mathbb Z[y_1,\ldots,y_n]$ of a module $M\in\mod A$ coincides with the Newton polytope of $M$.
The $F$-invariant in cluster algebras is introduced in \cite{Cao-2023} as a generalization of the $E$-invariant \cite{DWZ10} in additive cluster categorification and the $\mathfrak{d}$-invariant \cite{kkko-2018,kkop-2020} in monoidal cluster categorification. It turns out that the $F$-invariant is related to the tropical $F$-polynomials.

Since the $g$-vectors can determine the cluster monomials and $\tau$-rigid pairs, it is natural to ask {\em whether the $F$-polynomials can determine the cluster monomials and $\tau$-rigid pairs?} 

In order to consider this question, it is natural to exclude the initial cluster variables $x_1,\ldots,x_n$ and correspondingly the initial indecomposable $\tau$-rigid pairs $(0,P_1),\ldots,(0,P_n)$, because the $F$-polynomials of these initial objects are $1$. So we only need to consider cluster monomials in non-initial cluster variables and $\tau$-rigid modules.

In \cite{ckq_2024}, Keller, Qin, and the author proved that the cluster monomials in non-initial cluster variables are uniquely determined by their $F$-polynomials. The proof there depends on a notion of valuation pairing on (upper) cluster algebras. Typically, the same proof does not work in $\tau$-tilting theory.

Recently, it was proved in \cite{cao-2025b} that the {\em indecomposable} $\tau$-rigid modules and the left finite bricks in $\mod A$ are uniquely determined by their Newton polytopes, which is equivalent to saying that such modules are uniquely determined by the Newton polytopes of their $F$-polynomials, thanks to Fei's result \cite{fei_2019a}. The proof in \cite{cao-2025b} is based on two key results: (i) all functorially finite torsion classes are semistable torsion classes \cite{asai-2021}; and (ii) the brick-$\tau$-rigid correspondence in \cite{DIJ-17}, which is a bijection between the indecomposable $\tau$-rigid modules and the left finite bricks. In the general case, it is proved in \cite{cao-2025b} that if two rigid modules $U$ and $V$ (not necessarily indecomposable) have the same Newton polytope, then $U\oplus V$ remains $\tau$-rigid (see \cite[Theorem 3.2]{cao-2025b}  or Lemma \ref{lem:sum-rigid} in this paper). However, it is not clear whether $U$ and $V$ are isomorphic or not. The main motivation for this paper is the following question.

\begin{question}
{\em Whether the Newton polytopes of $F$-polynomials determine the cluster monomials in non-initial cluster variables and the $\tau$-rigid modules?}
\end{question}

   Recently, the Newton polytopes of $F$-polynomials and modules were extensively studied in the literature, {\em cf.} \cite{BK-2012,BKT-2014,BCDMTY-2024, fei_2019a,fei_2019b,AHIKM-2022,Asai-Iyama-2020,li-pan-2022,cao-2025b}. In particular, the Newton polytopes of modules over preprojective algebras are used to study MV polytopes in \cite{BK-2012, BKT-2014}, which parametrize Lusztig’s canonical basis, \confer \cite{Kamnitzer-2010}.

Before giving the main results of this paper, we first recall the definitions of $\tau$-rigid modules and left finite multi-semibricks.
\begin{definition}[$\tau$-rigid module, multi-semibrick and left finite module]
(i) A module $M\in\mod A$ is called {\em $\tau$-rigid}, if $\Hom_A(M,\tau M)=0$, where $\tau$ is the Auslander-Reiten translation in $\mod A$.

(ii) A module $M\in \mod A$ is called a {\em multi-semibrick} if
$M$ has a decomposition $M\cong \oplus_{i=1}^rC_i^{a_i}$ such that each $C_i\in\mod A$ is a brick and $\Hom_A(C_i,C_j)=0$ for any $i\neq j$.

(iii) A module $M\in\mod A$ is called {\em left finite}, if the smallest torsion class containing $M$ is a functorially finite torsion class in $\mod A$.
\end{definition}

The main results in this paper are as follows.

\begin{theorem}\label{A-thm}
Let $\mathcal A$ be a skew-symmetrizable cluster algebra with initial seed $({\bf x}_{t_0}, B_{t_0})$. Let $u$ and $v$ be two cluster monomials in non-initial cluster variables.
 If the two $F$-polynomials $F_u^{t_0}$ and $F_{v}^{t_0}$ have the same Newton polytope, then $u=v$.   
\end{theorem}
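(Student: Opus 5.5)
The plan has two stages. First, use the Newton polytope to show that $u$ and $v$ are compatible, i.e.\ they lie in a common cluster. Then, inside that common cluster, use the left Bongartz completion to recover each individual cluster variable from the polytope.

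\emph{Stage 1: compatibility.} The $F$-invariant of \cite{Cao-2023} is governed by tropical $F$-polynomials. Concretely, $(u\,\|\,v)_F$ is expressible through the tropical evaluation of $F_u$ at $g$-vector data of $v$, and vice versa. Tropical evaluation of a polynomial at a vector is a maximum (or minimum) of a linear functional over its support, so it depends only on the Newton polytope $\mathsf P(F_u)$. We want $(u\,\|\,v)_F=0$, since vanishing of the $F$-invariant is equivalent to $u$ and $v$ being compatible.
\begin{itemize}
\item From $\mathsf P(F_u)=\mathsf P(F_v)$, each symmetrized $F$-invariant term involving $u$ can be replaced by the same term with $v$.
\item This gives $(u\,\|\,v)_F=(v\,\|\,v)_F$, and the right-hand side is $0$ because a cluster monomial is compatible with itself.
\end{itemize}
This is the cluster analogue of Lemma~\ref{lem:sum-rigid}, that $U\oplus V$ is $\tau$-rigid. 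So $u$ and $v$ both lie in some cluster $\mathbf x_t$, and since all variables involved are non-initial we may write $u=\mathbf x_t^{\mathbf a}$ and $v=\mathbf x_t^{\mathbf b}$.

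\emph{Stage 2: separating the variables.} We must show $\mathbf a=\mathbf b$. I would choose the seed $t$ as the left Bongartz completion of the (non-initial) variables appearing in $u$ and $v$. In that seed, the relevant variables should have $F$-polynomials whose Newton polytopes admit a distinguished ``vertex decomposition.'' This is the analogue of the fact that for the Bongartz co-completion the modules $M_i$ relate to a torsion class with distinct minimal generators.

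Newton polytopes are multiplicative under products: $\mathsf P(F_u)=\sum_i a_i\,\mathsf P(F_{x_{i;t}})$ as a Minkowski sum. So it suffices to show that the summands $\mathsf P(F_{x_{i;t}})$ for the relevant $i$ are ``Minkowski independent.'' A clean way to do this is through the partial $F$-invariant:
\begin{itemize}
\item Find linear functionals $\lambda_j$, built from the partial $F$-invariant against suitable $g$-vectors, for instance dual $c$-vectors or the $g$-vector of the $j$-th variable in the co-completion.
\item Arrange that the support function of $\mathsf P(F_{x_{i;t}})$ at $\lambda_j$ equals $\delta_{ij}$ times a positive constant, after normalizing by constant terms.
\item Evaluating the support function of $\mathsf P(F_u)=\mathsf P(F_v)$ at each $\lambda_j$ then reads off $a_j=b_j$.
\end{itemize}

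\emph{Main obstacle.} The hardest step is constructing these functionals, i.e.\ showing that the partial $F$-invariant $(x_{i;t}\,\|\,x_{j;t'})_F$ against a suitably chosen mutated variable $x_{j;t'}$ is nonzero exactly when $i=j$. I would try first to take $t'=\mu_j(t)$, the red (left) mutation at $j$, and use the exchange relation. Since $x_{j;t}$ and $x_{j;t'}$ are incompatible, their $F$-invariant is positive, and positivity of the exchange-pair $F$-invariant gives the positive constant. For $i\neq j$, $x_{i;t}$ and $x_{j;t'}$ are compatible, so the invariant vanishes.

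The delicate point is that the $F$-invariant is a symmetrized quantity, while the support-function argument needs the one-sided partial version to be linear in the $F$-polynomial's tropicalization. The left Bongartz choice of $t$ is meant to guarantee that the other one-sided term vanishes, so that the full information sits in the tropical $F$-polynomial of $u$, and hence in its Newton polytope.
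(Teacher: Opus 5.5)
Your Stage 1 is the paper's Lemma~\ref{lem:uu1}, with one small correction. The polytope lets you interchange $F_u^{t_0}$ and $F_v^{t_0}$, but not ${\bf g}_u^{t_0}$ and ${\bf g}_v^{t_0}$. So the justified computation is $(u\mid\mid v)_F=F_v^{t_0}[D{\bf g}_v^{t_0}]+F_u^{t_0}[D{\bf g}_u^{t_0}]=\tfrac12(v\mid\mid v)_F+\tfrac12(u\mid\mid u)_F=0$.

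The genuine gap is in Stage 2. Your functional is $\lambda_j=D{\bf g}^{t_0}_{x_{j;t'}}$ with $t'=\mu_j(t)$. It satisfies $F^{t_0}_{x_{j;t}}[\lambda_j]=d_j>0$ only when the $j$-th column of $C_t^{t_0}$ is non-positive. The identity $(x_{j;t}\mid\mid x_{j;t'})_F=d_j$ is symmetric, and Proposition~\ref{prop:exchange-pair} shows that for a green direction all of $d_j$ sits on the other side: $F^{t_0}_{x_{j;t}}[\lambda_j]=0$, so $\lambda_j$ does not see $x_{j;t}$.

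The left Bongartz completion does not make every support direction red. Definition~\ref{def:completion} only forces the columns indexed \emph{outside} $\supp(uv)$ to be non-negative; columns indexed by $\supp(uv)$ can have either sign. In Example~\ref{ex:A2} (with $D=I_2$), the left Bongartz completion of $\{x_3,x_4\}$ is that cluster itself. Its $c$-vectors are $(0,1)^T$ for $x_3$ and $(-1,-1)^T$ for $x_4$, so $x_3$ is green. Mutating at $x_3$ yields $x_5$, and $F_{x_3}[{\bf g}_{x_5}]=F_{x_4}[{\bf g}_{x_5}]=0$.

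Worse, $\mathsf P(F_{x_3})\subseteq\mathsf P(F_{x_4})$. Hence no functional $\lambda$ whatsoever has $F_{x_4}[\lambda]=0<F_{x_3}[\lambda]$, so the $\delta_{ij}$-type Minkowski independence you aim for is false in general. A better choice of functionals cannot repair this.

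A telltale sign is that your argument never uses that the variables are non-initial. Yet the theorem fails without that hypothesis: take $u=x_{1;t_0}$, $v=x_{2;t_0}$. There the left Bongartz completion is the initial seed, where every direction is green.

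What the left Bongartz completion $[{\bf x}_s]$ of $\supp(uv)$ actually guarantees is \emph{one} red direction $k$ with $x_{k;s}\in\supp(uv)$. Otherwise $C_s^{t_0}$ would be non-negative, and Proposition~\ref{prop:C-positive} would give $[{\bf x}_s]=[{\bf x}_{t_0}]$, contradicting non-initiality. For that $k$ your functional does work and gives $a_k=b_k$ (Lemma~\ref{lem:ak=bk}).

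The paper then cancels $x_{k;s}^{a_k}$ by the Minkowski cancellation law (Lemma~\ref{lem:v-v1}) and inducts on $|\supp(uv)|$. At each step it recomputes the left Bongartz completion of the smaller support. In the example, after removing $x_4$, the completion of $\{x_3\}$ is $\{x_2,x_3\}$, with $c$-vectors $(-1,0)^T$ for $x_3$ and $(1,1)^T$ for $x_2$, so now $x_3$ is red.

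Reading off all exponents at once, as you propose, is exactly what works for multi-semibricks in Theorem~\ref{C-thm}, where every labeling brick corresponds to a left mutation. For cluster monomials and $\tau$-rigid modules this one-at-a-time induction is necessary.
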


\begin{theorem}\label{B-thm}
Let $A$ be a finite dimensional basic algebra over an algebraically closed field $\mathbf{k}$. Let $U$ and $V$ be two $\tau$-rigid modules in $\mod A$. If $U$ and $V$ have the same Newton polytope, then $U\cong V$.  
\end{theorem}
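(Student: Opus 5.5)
We aim to show that $\mathsf P(U)=\mathsf P(V)$ forces $U\cong V$, using the left Bongartz completion and the $F$-invariant announced in the abstract. The first step is Lemma \ref{lem:sum-rigid} (\cite[Theorem 3.2]{cao-2025b}): since $\mathsf P(U)=\mathsf P(V)$, the module $W:=U\oplus V$ is $\tau$-rigid. We may assume $U$ and $V$ are basic. Then $\add U$ and $\add V$ both lie inside $\add W$, and the task is to show that they coincide. Write $W=\bigoplus_{i\in I} W_i$ with pairwise non-isomorphic indecomposable summands. Then $U=\bigoplus_{i\in I_U}W_i$ and $V=\bigoplus_{i\in I_V}W_i$ with $I_U\cup I_V=I$. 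We must show $I_U=I_V$.

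Next, we extract combinatorial information from the polytope. For a $\tau$-rigid module $M$, the vertices of $\mathsf P(M)$ (equivalently of $\mathsf P(F_M)$, by Fei's theorem \cite{fei_2019a}) correspond to quotients of $M$ cut out by generic stability conditions. In particular, the maximal vertex $\underline{\dim}M$ and the minimal vertex $0$ are present. More usefully, the tropical $F$-polynomial of $M$ evaluated at a $g$-vector $\mathbf g$ computes the partial $F$-invariant. Concretely, for every $\tau$-rigid $N$ with $g$-vector $\mathbf g_N$, the quantity $\max_{\mathbf v\in\mathsf P(M)}\langle \mathbf g_N,\mathbf v\rangle$-type expressions recover $\dim\Hom(N,\tau M)$ or $\dim \Hom(M,\tau N)$ (the $E$-invariant), up to a term depending only on $\mathbf g_N$ and $\underline{\dim}M$. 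Since $\mathsf P(U)=\mathsf P(V)$, all these quantities agree for $U$ and $V$. We also have $\underline{\dim}U=\underline{\dim}V$, since this is the unique maximal vertex.

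The core step uses the left Bongartz completion. Take the left Bongartz completion of the $\tau$-rigid pair associated with $U$, namely the $\tau$-tilting pair whose torsion class is the smallest one containing $U$, i.e.\ $\Fac U$. The indecomposable summands of this completion outside $U$ are the projective part $(0,P)$ and the modules in ${}^\perp(\tau U)\cap\Fac U$, so $\Fac U$ is the key invariant. We claim $\Fac U$ is determined by $\mathsf P(U)$: a brick $B$ lies in $\Fac U$ iff $\Hom(U,B)\neq0$ with $B$ in the torsion class, and via \cite{asai-2021} the torsion class $\Fac U$ is a semistable torsion class $\mathcal T_\theta$ for $\theta$ in the interior of the $g$-vector cone of its Bongartz completion. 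These $\theta$ are characterized by the face of $\mathsf P(U)$ maximizing $\theta$ being the single vertex $\underline{\dim}U$. This face structure is read from the polytope, so $\Fac U=\Fac V$. The analogous argument with the dual (right) completion and minimizing faces gives ${}^\perp(\tau U)={}^\perp(\tau V)$. By AIR, a support $\tau$-tilting pair is determined by its torsion class, and $U$ is the Ext-projective part of $\Fac U$ that is not Ext-projective in the larger class ${}^\perp\tau U$ restricted appropriately. More precisely, $\add U=\add P(\Fac U)\cap {}^\perp(\tau U)$-relative summands, modulo summands that are split projectives in $\Fac U$. To remove this ambiguity we use the $F$-invariant: for a summand $W_i$ with $i\in I_U\setminus I_V$, compare $\underline{\dim}U=\underline{\dim}V$. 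The set $\{\underline{\dim}W_i\}$ together with the $g$-vectors of the Bongartz completion forms a basis, because $g$-vectors of a $\tau$-tilting pair form a basis and dimension vectors of summands of $W$ are sign-coherently determined by them. Equality of the sums $\sum_{I_U}\underline{\dim}W_i=\sum_{I_V}\underline{\dim}W_i$ together with $\Fac U=\Fac V$ then forces $I_U=I_V$.

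The main obstacle is the last step: linear independence of the dimension vectors of summands of a $\tau$-rigid module is false in general, so the final comparison must instead use the partial $F$-invariant. My plan there is to induct on $|I|$. Choose $W_i$ that is a "left-minimal" summand in $\Fac W$ (one not in $\Fac$ of the others, obtained from the left Bongartz completion). Show via the partial $F$-invariant $\langle \mathbf g_{W_i}, -\rangle$ read from $\mathsf P$ that $W_i$ appears in $U$ iff it appears in $V$. Then pass to the $\tau$-tilting reduction (Jasso) at $W_i$, where Newton polytopes of the remaining summands transform compatibly, and apply induction.
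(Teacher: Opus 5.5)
\textbf{There is a genuine gap; in fact there are three.}

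\textbf{Multiplicities are lost.} You cannot assume $U$ and $V$ are basic. In general $\mathsf P(U^{\oplus 2})=2\,\mathsf P(U)\neq\mathsf P(U)$, so $\mathsf P(U)=\mathsf P(V)$ gives no direct information about the basic parts. The real content of the theorem beyond the indecomposable case is recovering multiplicities: writing $U=\bigoplus_i\mathcal M_i^{a_i}$ and $V=\bigoplus_i\mathcal M_i^{b_i}$, you must prove $a_i=b_i$, not merely $I_U=I_V$.

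Your torsion-class step cannot do this even if it worked, since $\Fac U$ and ${}^{\perp}(\tau U)$ only see $\add U$. It also does not work as written:
\begin{itemize}
\item The face characterization of the $\theta$'s is false. Take $A=\mathbf k(1\to 2)$ and $U=P_1$, so the left Bongartz completion is $(P_1\oplus S_1,0)$. For every $\theta$ in the interior of the cone spanned by ${\bf g}_{P_1},{\bf g}_{S_1}$, the extreme vertices of $\mathsf P(P_1)$ in directions $\pm\theta$ are $0$ and $\underline{\dim}S_1$, never $\underline{\dim}P_1$.
\item The claim ${}^{\perp}(\tau U)={}^{\perp}(\tau V)$ is unsupported. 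The natural test $F_N[{\bf g}_U]=\hom_A(N,\tau U)$ involves ${\bf g}_U$, which is not a priori determined by $\mathsf P(U)$.
\end{itemize}
The equality $\Fac U=\Fac V$ itself is true and easy. If $\mathcal M$ is the left Bongartz completion of $U$, then $F_V[{\bf g}_{\mathcal M}]=F_U[{\bf g}_{\mathcal M}]=0$, so $V\in\Fac U$ by Corollary \ref{cor:dom-hom}; the reverse inclusion follows symmetrically.

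\textbf{The test vector is wrong.} By Theorem \ref{thm:FE-hom}, $F_U[{\bf g}_{W_i}]=\hom_A(U,\tau W_i)=0$ for every summand $W_i$ of the $\tau$-rigid module $W$, and likewise for $V$. So this functional carries no information. The missing idea is to evaluate at the exchange partner instead.
\begin{itemize}
\item Let $\mathcal M=\bigoplus_i\mathcal M_i$ be the left Bongartz completion of $(U\oplus V)^\flat$. A left mutation $\mu_k(\mathcal M)$ exists because $\Fac M\neq 0$.
\item $\mathcal M_k$ must be a summand of $U\oplus V$. Otherwise $\mu_k(\mathcal M)$ would still contain $(U\oplus V)^\flat$, contradicting the minimality of $\Fac M$.
\item Evaluate at ${\bf g}_{\mathcal M_k'}$. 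Since $\mathcal M_i\oplus\mathcal M_k'$ is $\tau$-rigid for $i\neq k$, you get $F_{\mathcal M_i}[{\bf g}_{\mathcal M_k'}]=0$. Lemma \ref{lem:exchange} gives $F_{\mathcal M_k}[{\bf g}_{\mathcal M_k'}]>0$.
\item Additivity of tropical $F$-polynomials then yields $a_kF_{\mathcal M_k}[{\bf g}_{\mathcal M_k'}]=F_U[{\bf g}_{\mathcal M_k'}]=F_V[{\bf g}_{\mathcal M_k'}]=b_kF_{\mathcal M_k}[{\bf g}_{\mathcal M_k'}]$. Hence $a_k=b_k$, with multiplicities, not just presence.
\end{itemize}

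\textbf{The Jasso reduction step is unproved and unnecessary.} It rests on the assertion that Newton polytopes ``transform compatibly'' under reduction, which you do not prove and which is nontrivial. You do not need it. Minkowski cancellation (Corollary \ref{cor:minkowski-sum} together with Corollary \ref{cor:cancel-law}) strips $M_k^{a_k}$ from both sides. This leaves $\tau$-rigid modules $U',V'$ in $\mod A$ with $\mathsf P(U')=\mathsf P(V')$ and $|U'\oplus V'|<|U\oplus V|$, so you can simply induct on $|U\oplus V|$ without leaving $\mod A$.
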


The left finite multi-semibricks are the dual counterpart of $\tau$-rigid modules in some sense. So it is natural to extend the result on $\tau$-rigid modules to the left finite multi-semibricks.

\begin{theorem}\label{C-thm}
 Let $U$ and $V$ be two left finite multi-semibricks in $\mod A$. If $U$ and $V$ have the same Newton polytope, then $U\cong V$.
\end{theorem}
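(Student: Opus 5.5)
We will prove Theorem \ref{C-thm} by reducing it to Theorem \ref{B-thm} through the brick--$\tau$-rigid correspondence of \cite{DIJ-17}, extended to the semibrick--$\tau$-tilting correspondence of Asai. For a left finite multi-semibrick $U\cong\oplus_{i=1}^r C_i^{a_i}$ the underlying semibrick $\mathcal S_U=\{C_1,\ldots,C_r\}$ is left finite. Put $\mathcal T_U=\mathsf T(U)$, the smallest torsion class containing $U$; it is functorially finite and equals $\mathsf T(\mathcal S_U)$. Hence $\mathcal T_U=\Fac M_U$ for a support $\tau$-tilting module $M_U$. Moreover, the $C_i$ are exactly the tops $M_i/\mathrm{rad}_{\End(M_U)}(M_U,M_i)$ of those indecomposable summands $M_i$ of $M_U$ that are not split projective in $\Fac M_U$.

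The first step shows that $\mathsf P(U)$ determines the torsion class $\mathcal T_U$. Every quotient of $U$ lies in $\mathcal T_U$. Conversely, we will use the semistability description as in \cite{cao-2025b}: by \cite{asai-2021}, $\mathcal T_U$ is a semistable torsion class $\overline{\mathcal T}_\theta$ for a suitable $\theta$, and the bricks $C_i$ are $\theta$-semistable. The aim is to characterise the relevant $\theta$'s purely through $\mathsf P(U)$. Concretely, $\langle\theta,\mathbf v\rangle\ge 0$ on $\mathsf P(U)$ says that all quotients of $U$ have nonnegative $\theta$-pairing. Taking also $\langle\theta,\underline{\dim}\,U\rangle=\min$ forces $U$ to be $\theta$-semistable, and this should pin down $\mathcal T_U$ as the intersection of the corresponding $\overline{\mathcal T}_\theta$. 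So if $\mathsf P(U)=\mathsf P(V)$, the same set of $\theta$ works for both modules, and $\mathsf T(U)=\mathsf T(V)$ follows. This is the step we expect to be the main obstacle. The plan is to mimic the indecomposable brick case of \cite{cao-2025b}, replacing a single brick by the semibrick $\mathcal S_U$, and to use that the vertex $\underline{\dim}\,U$ of $\mathsf P(U)$ is the unique maximal point.

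Once $\mathcal T_U=\mathcal T_V$, the semibricks agree, $\mathcal S_U=\mathcal S_V=\{C_1,\ldots,C_r\}$, because the bijection between functorially finite torsion classes and left finite semibricks is given by $\mathcal T\mapsto$ the set of bricks $S$ in $\mathcal T$ such that every nonzero map $T\to S$ from some $T\in\mathcal T$ is... In practice we simply use Asai's bijection. It remains to recover the multiplicities. We have $U=\oplus C_i^{a_i}$ and $V=\oplus C_i^{b_i}$. Because $\Hom(C_i,C_j)=0$ for $i\neq j$ and each $C_i$ is a brick, the module $C_i$ is a quotient of $U$ exactly when $a_i\ge1$. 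Then $\mathsf P(U)\supseteq\sum_i a_i\,\mathsf P(C_i)$, with vertex $\underline{\dim}\,U=\sum a_i\underline{\dim}\,C_i$. Because $\mathsf P(U)$ is a convex set of dimension vectors of quotients, it contains both $0$ and $\underline{\dim}\,U$ as vertices, so $\underline{\dim}\,U=\underline{\dim}\,V$. Hence $\sum_i a_i\underline{\dim}\,C_i=\sum_i b_i\underline{\dim}\,C_i$.

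To conclude $a_i=b_i$ we will show that the $\underline{\dim}\,C_i$ are linearly independent. They are the labels of the lower walls (the brick labels) of the cone of $M_U$, and so they form part of the basis dual to the $g$-vector basis of a $\tau$-tilting pair, via the standard duality $\langle g^{M_j},\underline{\dim}\,C_i\rangle=\delta_{ij}\cdot d_i$. This gives linear independence, hence $a_i=b_i$ for all $i$, and therefore $U\cong V$.
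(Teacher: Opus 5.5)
Your argument is correct. Steps 1 and 2 coincide with the paper's Lemmas \ref{lem:UV-torsion} and \ref{lem:UV-semibrick}, phrased in stability language. Step 3 takes a genuinely different route.

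On Step 1, it is not really an obstacle. Once $\mathcal T_U=\overline{\mathcal T}_\theta$ for a single $\theta$, the condition $V\in\overline{\mathcal T}_\theta$ reads $\langle\theta,{\bf v}\rangle\geq 0$ for all ${\bf v}\in\mathsf P(V)=\mathsf P(U)$. This holds because $U\in\overline{\mathcal T}_\theta$. Hence $\mathcal T_V\subseteq\mathcal T_U$, and symmetry gives equality. The paper does exactly this with $\theta=-{\bf g}_{\mathcal M}$, proving $\Fac M=\{N\mid F_N[{\bf g}_{\mathcal M}]=0\}$ in Corollary \ref{cor:dom-hom} rather than quoting Asai's semistability theorem. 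You should drop the semistability detour. It is unnecessary, and for this $\theta$ one has $\langle\theta,\underline{\dim}\,C_i\rangle=\hom_A(M_i,C_i)>0$ (see below), so the $C_i$ are not $\theta$-semistable. That only happens for $\theta$ chosen on a suitable face of the cone.

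On Step 3, the two proofs diverge. The paper fixes $k\in I$, performs the left mutation $\mu_k(\mathcal M)$, and evaluates tropical polynomials at ${\bf g}_{\mathcal M_k'}$; Lemma \ref{lem:multi-brick} shows that only $C_k$ contributes. You stay at $\mathcal M$ and use only the top vertex $\underline{\dim}\,U$ of $\mathsf P(U)$ together with a linear pairing. Your ``standard duality'' deserves its one-line proof, which the paper's tools supply. By Lemma \ref{lem:g-MX}, for $\mathcal M_j=(M_j,Q_j)$ and $i\in I$,
\[\langle{\bf g}_{\mathcal M_j},\underline{\dim}\,C_i\rangle=\hom_A(C_i,\tau M_j)-\hom_A(M_j,C_i)+\hom_A(Q_j,C_i).\]
The outer terms vanish because $C_i\in\Fac M=\prescript{\bot}{}{(\tau M)}\cap Q^{\bot}$. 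The middle term vanishes for $j\neq i$ by Proposition \ref{pro:asai-brick}(i), and is nonzero for $j=i$ since $0\neq C_i\in\Fac M_i$. Pairing $\sum a_i\underline{\dim}\,C_i=\sum b_i\underline{\dim}\,C_i$ with ${\bf g}_{\mathcal M_k}$ then gives $a_k\hom_A(M_k,C_k)=b_k\hom_A(M_k,C_k)$, hence $a_k=b_k$.

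What each route buys: yours avoids mutation and Lemma \ref{lem:multi-brick} entirely. It also shows that once the torsion class is known, $\underline{\dim}\,U$ alone fixes the multiplicities. Left finiteness is essential here: distinct regular simple Kronecker modules form a semibrick with equal dimension vectors. The paper's route keeps every step an evaluation of a partial $F$-invariant, so the proof runs in parallel with those of Theorems \ref{A-thm} and \ref{B-thm}.
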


The proofs of Theorem \ref{A-thm} and Theorem \ref{B-thm} are quite similar, while the proof of Theorem \ref{C-thm} is slightly different. Let us make some comments about the proofs of Theorem \ref{A-thm} and Theorem \ref{B-thm}.  Both proofs are based on the reduction arguments.
The key points are summarized as follows:
\begin{itemize}
    \item [(i)] Show that the direct sum $U\oplus V$ remains a $\tau$-rigid module; and the product $uv$ remains a cluster monomial.
    \item[(ii)] 
The proof of Theorem \ref{B-thm} is
by reducing the number $|U\oplus V|$ of iso-classes of indecomposable direct summands of $U\oplus V$. The proof of Theorem \ref{A-thm} is by reducing the number $|\supp(uv)|$ of cluster variables in the support set $\supp(uv)$ of $uv$, which is the set of cluster variables appearing in the cluster monomial $uv$.
\item[(iii)] In order to reduce the number $|U\oplus V|$, we consider the left Bongartz completion $\mathcal M=(M,P)$ of $U\oplus V$, which is the basic $\tau$-tilting pair such that $\Fac M=\Fac (U\oplus V)$. Then we show that any left mutation of $\mathcal M=(M,P)$ can be used to construct two new $\tau$-rigid modules $U'\in \add U $ and $V'\in \add V$ satisfying that  $U'$ and $V'$ have the same Newton polytope but $|U'\oplus V'|<|U\oplus V|$. Moreover, $U\cong V$ if and only if $U'\cong V'$.

\item[(iv)] In order to reduce the number $|\supp(uv)|$, we consider the left Bongartz completion $[{\bf x}_s]$ of the partial cluster $\supp(uv)$. Then we show that any red mutation of the seed $({\bf x}_s, B_s)$ can be used to construct two new cluster monomials $u'$ and $v'$ satisfying that $u'$ and $v'$ have the same Newton polytope but $\supp(u'v')\subsetneq \supp(uv)$. Moreover, $u=v$ if and only if $u'=v'$. 
\end{itemize}

Of course, many preparations are required to ensure that each stage goes smoothly. These will be detailed in the main body of the paper. Among these, the {\em $F$-invariant} and {\em partial $F$-invariant} in cluster algebra and $\tau$-tilting theory will play a crucial role in the proofs, because such invariants are directly related to the Newton polytopes. We give a detailed discussion on $F$-invariant of cluster monomials in Section \ref{sec:F-cluster} and  that of decorated modules in Section \ref{sec:F-tau-tilting}.

This paper consists of two closely related parts: cluster algebras and $\tau$-tilting theory. These two parts can be read in any order. Personally, I first proved the main result in cluster algebras and then realized that a similar approach can be applied to the $\tau$-tilting theory.

\begin{remark}
In this paper, the prefixes ``left" and ``right" in the terminology ``left mutation", ``left Bongartz completion", ``right mutation",  and ``right Bongartz completion" indicate the direction of change under a suitable partial order: {\em ``left” denotes a move to a smaller element, while ``right” denotes a move to a larger one.} This convention aligns with the familiar ordering of integers $(\dots,-2,-1,0,1,2,\ldots)$, where moving left yields smaller numbers and moving right yields larger ones.
\end{remark}

 {\bf Acknowledgement.} 
 The author is partially supported the National Key R\&D Program of China (2024YFA1013801).

\section{Preliminaries on polytopes and cluster algebras}
Throughout, we denote by $\langle{-,-}\rangle:\mathbb R^n\times \mathbb R^n\to \mathbb R$ the standard inner product on $\mathbb R^n$ and by ${\bf e}_1,\ldots,{\bf e}_n$ the standard basis of $\mathbb R^n$.

\subsection{Polytopes, Minkowski sum and tropical polynomials}
A {\em polytope} $\mathsf P$ in $\mathbb R^n$ is the convex hull of a finite (non-empty)
subset of $\mathbb R^n$, which is a bounded closed subset in $\mathbb R^n$. For a polytope  $\mathsf P$ in $\mathbb R^n$, its {\em support function} $h_{\mathsf P}:\mathbb R^n\to \mathbb R$ is defined by
\[ h_{\mathsf P}({\bf r}):=\max\{ \langle{{\bf a},{\bf r}}\rangle\mid {\bf a}\in\mathsf P\}.\]
It is known from \cite[Section 4.2]{BK-2012} or \cite[Section 1.7]{Schneider_2013} that the polytope $\mathsf P$ can be recovered from its support function $h_{\mathsf P}$ by 
\[ \mathsf P=\{{\bf a}\in\mathbb R^n\mid \langle{{\bf a},{\bf r}}\rangle\leq h_{\mathsf P}({\bf r}),\;\forall {\bf r}\in\mathbb R^n\}.\]

Let $\mathsf P_1$ and $\mathsf P_2$ be two polytopes in $\mathbb R^n$. The {\em Minkowski sum} of $\mathsf P_1$ and $\mathsf P_2$ is the polytope in $\mathbb R^n$ given by
\[\mathsf P_1+\mathsf P_2:=\{ {\bf a}+{\bf b}\mid {\bf a}\in \mathsf P_1,\;{\bf b}\in\mathsf P_2\}.\]

\begin{theorem}[{\cite[Theorem 1.7.5]{Schneider_2013}}] \label{thm:h-additive}
Let $\mathsf P_1$ and $\mathsf P_2$ be two polytopes in $\mathbb R^n$. Then \[h_{\mathsf P_1+\mathsf P_2}=h_{\mathsf P_1}+h_{\mathsf P_2}.\]
\end{theorem}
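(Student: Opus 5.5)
We prove the identity $h_{\mathsf P_1+\mathsf P_2}=h_{\mathsf P_1}+h_{\mathsf P_2}$ pointwise: fix an arbitrary ${\bf r}\in\mathbb R^n$ and establish the two inequalities separately. Linearity of $\langle -,{\bf r}\rangle$ will do most of the work. Compactness of polytopes guarantees that all maxima involved are attained.

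For the inequality $h_{\mathsf P_1+\mathsf P_2}({\bf r})\le h_{\mathsf P_1}({\bf r})+h_{\mathsf P_2}({\bf r})$, take any point of $\mathsf P_1+\mathsf P_2$; by definition of the Minkowski sum it has the form ${\bf a}+{\bf b}$ with ${\bf a}\in\mathsf P_1$ and ${\bf b}\in\mathsf P_2$. Then
\[\langle {\bf a}+{\bf b},{\bf r}\rangle=\langle {\bf a},{\bf r}\rangle+\langle {\bf b},{\bf r}\rangle\le h_{\mathsf P_1}({\bf r})+h_{\mathsf P_2}({\bf r}).\]
Taking the maximum over all such points gives the bound.

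For the reverse inequality, choose maximizers ${\bf a}_0\in\mathsf P_1$ and ${\bf b}_0\in\mathsf P_2$, so that $\langle{\bf a}_0,{\bf r}\rangle=h_{\mathsf P_1}({\bf r})$ and $\langle{\bf b}_0,{\bf r}\rangle=h_{\mathsf P_2}({\bf r})$. These exist because each $\mathsf P_i$ is compact and nonempty and the function $\langle -,{\bf r}\rangle$ is continuous. The point ${\bf a}_0+{\bf b}_0$ lies in $\mathsf P_1+\mathsf P_2$, hence
\[h_{\mathsf P_1+\mathsf P_2}({\bf r})\ge h_{\mathsf P_1}({\bf r})+h_{\mathsf P_2}({\bf r}).\]

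One point needs checking for the left-hand side to make sense: $\mathsf P_1+\mathsf P_2$ must itself be a polytope, or at least compact. It is the continuous image of the compact set $\mathsf P_1\times\mathsf P_2$ under addition, so it is compact and the maximum defining $h_{\mathsf P_1+\mathsf P_2}$ is attained. If one also wants it to be a polytope in the paper's sense, write $\mathsf P_1=\mathrm{conv}(S_1)$ and $\mathsf P_2=\mathrm{conv}(S_2)$ with $S_1,S_2$ finite. Then $\mathsf P_1+\mathsf P_2=\mathrm{conv}(S_1+S_2)$, by a routine convex-combination computation using products of barycentric weights. There is no real obstacle here; this compactness/polytope check is the only step needing any care.
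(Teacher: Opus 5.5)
Your proof is correct. The paper does not prove this statement itself but cites Schneider's Theorem 1.7.5, and your argument (two pointwise inequalities from linearity of $\langle -,{\bf r}\rangle$, with maxima attained by compactness, plus the check that $\mathsf P_1+\mathsf P_2=\mathrm{conv}(S_1+S_2)$) is the standard proof behind that result.
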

As a direct consequence, the following cancellation law holds.
\begin{corollary}\label{cor:cancel-law}
    Let $\mathsf P_1, \mathsf P_2$ and $\mathsf Q$ be three polytopes in $\mathbb R^n$. If $\mathsf P_1+\mathsf Q=\mathsf P_2+\mathsf Q$, then $\mathsf P_1=\mathsf P_2$.
\end{corollary}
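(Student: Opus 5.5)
The plan is to pass from polytopes to support functions and apply Theorem \ref{thm:h-additive}. Suppose $\mathsf P_1+\mathsf Q=\mathsf P_2+\mathsf Q$. Taking support functions of both sides and using additivity gives, for every ${\bf r}\in\mathbb R^n$,
\[
h_{\mathsf P_1}({\bf r})+h_{\mathsf Q}({\bf r})=h_{\mathsf P_1+\mathsf Q}({\bf r})=h_{\mathsf P_2+\mathsf Q}({\bf r})=h_{\mathsf P_2}({\bf r})+h_{\mathsf Q}({\bf r}).
\]

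Next, cancel $h_{\mathsf Q}$. Since $\mathsf Q$ is a polytope, it is nonempty and compact, so $h_{\mathsf Q}({\bf r})$ is a finite real number for each ${\bf r}$ (a maximum of a continuous function over a compact set). Subtracting it is therefore legitimate, and we obtain $h_{\mathsf P_1}({\bf r})=h_{\mathsf P_2}({\bf r})$ for all ${\bf r}\in\mathbb R^n$.

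Finally, recover the polytopes from their support functions using the reconstruction formula recalled above:
\[
\mathsf P_i=\{{\bf a}\in\mathbb R^n\mid \langle{\bf a},{\bf r}\rangle\leq h_{\mathsf P_i}({\bf r}),\ \forall {\bf r}\in\mathbb R^n\}.
\]
The right-hand side depends only on the function $h_{\mathsf P_i}$, and these functions coincide, so $\mathsf P_1=\mathsf P_2$.

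No step is a genuine obstacle. The only point needing care is that the support functions are real-valued rather than $\pm\infty$, so that the cancellation in the second step is valid; this follows from compactness and nonemptiness of polytopes.
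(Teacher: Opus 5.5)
Your proof is correct and follows the paper's argument exactly: additivity of support functions (Theorem \ref{thm:h-additive}), cancellation of $h_{\mathsf Q}$, and recovery of a polytope from its support function. The only difference is that you spell out why $h_{\mathsf Q}$ is finite-valued, which the paper leaves implicit.
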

\begin{proof}
 Since $\mathsf P_1+\mathsf Q=\mathsf P_2+\mathsf Q$ and by Theorem \ref{thm:h-additive}, we have
 \[ h_{\mathsf P_1}+h_{\mathsf Q}=h_{\mathsf P_1+\mathsf Q}=h_{\mathsf P_2+\mathsf Q}=h_{\mathsf P_2}+h_{\mathsf Q}.
 \]
 Thus $h_{\mathsf P_1}=h_{\mathsf P_2}$. Since a polytope is uniquely determined its support function, we get $\mathsf P_1=\mathsf P_2$.
\end{proof}

In this paper, we mainly focus on the polytopes defined from polynomials and modules of a finite dimensional algebra $A$.

\begin{definition}[Newton polytopes of polynomials and modules] \label{def:polytope}
(i) Let $F({\bf y})=\sum_{{\bf v}\in\mathbb N^n}c_{\bf v}{\bf y}^{\bf v}\in\mathbb Z[y_1,\ldots,y_n]$ be a non-zero polynomial. Its {\em Newton polytope} $\mathsf P(F)$ is defined to be the convex hull of the finite set $\{{\bf v}\in\mathbb N^n\mid c_{\bf v}\neq 0\}$.

(ii) The {\em Newton polytope} $\mathsf P(M)$ of a module $M\in\mod A$ is defined to be the convex hull of the dimension vectors of the quotient modules of $M$.
\end{definition}

\begin{proposition}[{\cite[Chapter 6, Prop. 1.2]{GKZ-1994}}]
\label{pro:GKZ}
 Let $F_1$ and $F_2$ be two non-zero polynomials in $\mathbb Z[y_1,\ldots,y_n]$. Then 
 $$\mathsf P(F_1F_2)=\mathsf P(F_1)+\mathsf P(F_2),$$
 where $\mathsf P(F_k)$ is the Newton polytope of $F_k$ for $k=1,2$.
\end{proposition}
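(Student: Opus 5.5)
The natural route is through support functions, exactly as in Corollary \ref{cor:cancel-law}. For a nonzero polynomial $F=\sum_{\mathbf v} c_{\mathbf v}\mathbf y^{\mathbf v}$, write $S(F)=\{\mathbf v\mid c_{\mathbf v}\neq 0\}$, so that $\mathsf P(F)=\mathrm{conv}\, S(F)$. The plan has two parts: first prove $\mathsf P(F_1F_2)\subseteq \mathsf P(F_1)+\mathsf P(F_2)$, which is elementary, and then prove the reverse inclusion using vertices selected by generic linear functionals. The reverse inclusion is where the real content lies.

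For the inclusion $\subseteq$: every monomial of $F_1F_2$ has exponent $\mathbf a+\mathbf b$ with $\mathbf a\in S(F_1)$ and $\mathbf b\in S(F_2)$. Hence $S(F_1F_2)\subseteq S(F_1)+S(F_2)\subseteq \mathsf P(F_1)+\mathsf P(F_2)$. The Minkowski sum of two convex sets is convex, so taking the convex hull gives the inclusion.

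For the inclusion $\supseteq$, the key step is the following claim. For a generic $\mathbf r\in\mathbb R^n$, that is, $\mathbf r$ outside finitely many hyperplanes $\langle \mathbf a-\mathbf a',\mathbf r\rangle=0$ with $\mathbf a\neq\mathbf a'$ ranging over $S(F_1)\cup S(F_2)\cup S(F_1)+S(F_2)$, the functional $\langle -,\mathbf r\rangle$ attains its maximum on $S(F_1)$ at a unique point $\mathbf a_0$ and on $S(F_2)$ at a unique point $\mathbf b_0$.

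Given the claim, consider the coefficient of $\mathbf y^{\mathbf a_0+\mathbf b_0}$ in $F_1F_2$. It equals $\sum c^{(1)}_{\mathbf a}c^{(2)}_{\mathbf b}$ over pairs with $\mathbf a+\mathbf b=\mathbf a_0+\mathbf b_0$. Any such pair satisfies $\langle\mathbf a,\mathbf r\rangle+\langle\mathbf b,\mathbf r\rangle=\langle \mathbf a_0,\mathbf r\rangle+\langle\mathbf b_0,\mathbf r\rangle$. Since $\langle\mathbf a,\mathbf r\rangle\le\langle\mathbf a_0,\mathbf r\rangle$ and $\langle\mathbf b,\mathbf r\rangle\le\langle\mathbf b_0,\mathbf r\rangle$, both inequalities must be equalities. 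Uniqueness of the maximizers then forces $\mathbf a=\mathbf a_0$ and $\mathbf b=\mathbf b_0$. So the coefficient is $c^{(1)}_{\mathbf a_0}c^{(2)}_{\mathbf b_0}$. This is nonzero because $\mathbb Z$ is an integral domain, and it is the only place that fact is used. Therefore $\mathbf a_0+\mathbf b_0\in S(F_1F_2)$, which gives
\[h_{\mathsf P(F_1F_2)}(\mathbf r)\ \ge\ \langle \mathbf a_0+\mathbf b_0,\mathbf r\rangle = h_{\mathsf P(F_1)}(\mathbf r)+h_{\mathsf P(F_2)}(\mathbf r)=h_{\mathsf P(F_1)+\mathsf P(F_2)}(\mathbf r),\]
where the last equality is Theorem \ref{thm:h-additive}.

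Support functions of polytopes are continuous, since each is a maximum of finitely many linear functions. Generic $\mathbf r$ are dense, so the inequality extends to all $\mathbf r\in\mathbb R^n$. Combined with the first inclusion, this gives $h_{\mathsf P(F_1F_2)}=h_{\mathsf P(F_1)+\mathsf P(F_2)}$. Since a polytope is recovered from its support function, as recalled above, the two polytopes coincide.

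The main obstacle is the generic-vertex argument: choosing $\mathbf r$ so that the maximizers are unique, and checking that no cancellation occurs in the top coefficient. The rest is formal.
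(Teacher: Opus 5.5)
Your proof is correct. The paper does not prove this statement; it only cites Gelfand--Kapranov--Zelevinsky, so the useful comparison is with the classical argument behind that citation.

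That argument works with vertices. The inclusion $\mathsf P(F_1F_2)\subseteq\mathsf P(F_1)+\mathsf P(F_2)$ is trivial. For the converse, every vertex $\mathbf w$ of the Minkowski sum is exposed by some $\mathbf r$. Such a vertex splits uniquely as $\mathbf w=\mathbf a_0+\mathbf b_0$ with $\mathbf a_0,\mathbf b_0$ vertices of the two summands. Hence the coefficient of $\mathbf y^{\mathbf w}$ in $F_1F_2$ is $c^{(1)}_{\mathbf a_0}c^{(2)}_{\mathbf b_0}\neq 0$.

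Your no-cancellation step is the same idea, organized instead through support functions. You obtain $h_{\mathsf P(F_1F_2)}=h_{\mathsf P(F_1)}+h_{\mathsf P(F_2)}$ at generic $\mathbf r$ and extend it by continuity. You then conclude with Theorem~\ref{thm:h-additive} and the recovery of a polytope from its support function.

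Compared with the vertex route:
\begin{itemize}
\item Your version avoids the fact that faces of a Minkowski sum are sums of faces, at the cost of a density argument.
\item By Proposition~\ref{pro:trop-support}, your version proves the tropical identity of Corollary~\ref{cor-F-prod} directly and deduces the polytope identity from it. This reverses the paper's order, where the corollary is derived from the cited proposition.
\item The vertex route gives slightly more information: the coefficient of $F_1F_2$ at each vertex is the product of the corresponding vertex coefficients. That extra information is not needed in this paper.
\end{itemize}
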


\begin{definition}[Tropical polynomial] \label{def:trop-polynomial}
Let $F({\bf y})=\sum_{{\bf v}\in\mathbb N^n}c_{\bf v}{\bf y}^{\bf v}\in\mathbb Z[y_1,\ldots,y_n]$ be a non-zero polynomial. The {\em tropical polynomial} of $F$ is the map $F[-]:\mathbb R^n\to \mathbb R$ defined by
\[F[{\bf r}]:=\max\{\langle{{\bf v},{\bf r}}\rangle\mid c_{\bf v}\neq 0\}.\]
\end{definition}
 
Tropical polynomials play an important role in defining the $F$-invariant in cluster algebras and $\tau$-tilting theory in Sections \ref{sec:F-cluster}, \ref{sec:F-tau-tilting}. We can see that if ${\bf r}\in\mathbb Z^n$, then $F[{\bf r}]\in \mathbb Z$.
\begin{remark}\label{rmk:cons-1}
We have the following important facts.
\begin{itemize}
\item[(a)] The tropical polynomial $F[-]:\mathbb R^n\to\mathbb R$ is uniquely determined by the Newton polytope $\mathsf P(F)$ of $F$. Actually, it only depends on the vertices of the Newton polytope $\mathsf P(F)$.
    \item [(b)] If ${\bf r}\in\mathbb Z^n$, then $F[{\bf r}]\in \mathbb Z$.
    \item[(c)] If the polynomial $F$ has constant term $1$, then $F[{\bf r}]\in\mathbb Z_{\geq 0}$ for any ${\bf r}\in\mathbb Z^n$.
    \item [(d)] The $F$-polynomials of cluster monomials and modules defined later always have constant term $1$ (see Theorem \ref{thm:GHKK} (iv) and Remark \ref{rmk:F-M-1}).
\end{itemize}
\end{remark}
\begin{example}
  Take  $F=1+y_1+y_1y_2\in\mathbb Z[y_1,y_2]$ and ${\bf r}=\begin{bmatrix}
     -2\\1
 \end{bmatrix}$, then
 \begin{eqnarray}
     F[{\bf r}]=\max\{
     \begin{bmatrix}
         0,0
     \end{bmatrix}\begin{bmatrix}
     -2\\1
 \end{bmatrix}, \begin{bmatrix}
         1,0
     \end{bmatrix}\begin{bmatrix}
     -2\\1
 \end{bmatrix}, \begin{bmatrix}
         1,1
     \end{bmatrix}\begin{bmatrix}
     -2\\1
 \end{bmatrix}
     \}=\max\{0,-2,-1\}=0.
     \nonumber
 \end{eqnarray}
\end{example}

The following result can be checked easily.
\begin{proposition}\label{pro:trop-support}
    Let $F\in\mathbb Z[y_1,\ldots,y_n]$ be a non-zero polynomial and $\mathsf P(F)$ its Newton polytope. Then the tropical polynomial of $F$ and 
    the support function 
    of $\mathsf P(F)$ are the same, i.e.,
 \[ F[{\bf r}]=h_{\mathsf P(F)}({\bf r}),\quad \forall {\bf r}\in\mathbb R^n.\]   
\end{proposition}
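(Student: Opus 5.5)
The plan is to compare both sides directly from the definitions, using only the elementary fact that a linear functional attains the same maximum on a finite set as on its convex hull. Let $S=\{{\bf v}\in\mathbb N^n\mid c_{\bf v}\neq 0\}$, which is finite and non-empty because $F\neq 0$. By Definition \ref{def:polytope}, $\mathsf P(F)$ is the convex hull of $S$. By Definition \ref{def:trop-polynomial}, $F[{\bf r}]=\max_{{\bf v}\in S}\langle{\bf v},{\bf r}\rangle$, while $h_{\mathsf P(F)}({\bf r})=\max_{{\bf a}\in\mathsf P(F)}\langle{\bf a},{\bf r}\rangle$. So it suffices to show that, for each fixed ${\bf r}\in\mathbb R^n$, these two maxima agree.

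\emph{Step 1 (the inequality $F[{\bf r}]\leq h_{\mathsf P(F)}({\bf r})$).} Since $S\subseteq\mathsf P(F)$, the maximum over the larger set is at least the maximum over $S$.

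\emph{Step 2 (the reverse inequality).} Let ${\bf a}\in\mathsf P(F)$. Then ${\bf a}=\sum_{{\bf v}\in S}\lambda_{\bf v}{\bf v}$ with all $\lambda_{\bf v}\geq 0$ and $\sum_{{\bf v}\in S}\lambda_{\bf v}=1$. Linearity of $\langle -,{\bf r}\rangle$ gives
\[\langle{\bf a},{\bf r}\rangle=\sum_{{\bf v}\in S}\lambda_{\bf v}\langle{\bf v},{\bf r}\rangle\leq \Big(\sum_{{\bf v}\in S}\lambda_{\bf v}\Big)F[{\bf r}]=F[{\bf r}].\]
Taking the maximum over ${\bf a}\in\mathsf P(F)$ yields $h_{\mathsf P(F)}({\bf r})\leq F[{\bf r}]$. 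This maximum exists because $\mathsf P(F)$ is compact, and indeed it is attained at a point of $S$.

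Combining Steps 1 and 2 gives the identity for every ${\bf r}\in\mathbb R^n$. There is no real obstacle here. The only point needing care is the convex-combination bound in Step 2, which holds precisely because the coefficients $\lambda_{\bf v}$ are non-negative and sum to $1$.
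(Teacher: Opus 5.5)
Your argument is correct. Comparing the maximum of $\langle -,{\bf r}\rangle$ over the finite exponent set with its maximum over the convex hull, via the convex-combination bound, is the routine check the paper intends when it says the result ``can be checked easily''; the paper gives no written proof.
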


\begin{corollary}\label{cor-F-prod}
Let $F_1,F_2\in\mathbb Z[y_1,\ldots,y_n]$ be two non-zero polynomials. Then for any ${\bf r}\in\mathbb R^n$, we have $(F_1F_2)[{\bf r}]=F_1[{\bf r}]+F_2[{\bf r}]$.
\end{corollary}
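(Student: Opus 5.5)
The corollary follows by chaining three results already stated: Proposition \ref{pro:trop-support}, Proposition \ref{pro:GKZ} and Theorem \ref{thm:h-additive}. The idea is to turn the tropical polynomial into a support function, turn the product of polynomials into a Minkowski sum of Newton polytopes, and then use additivity of support functions under Minkowski sums.

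Fix ${\bf r}\in\mathbb R^n$. Since $F_1$ and $F_2$ are non-zero, their product $F_1F_2$ is non-zero, because $\mathbb Z[y_1,\ldots,y_n]$ is an integral domain. So all three tropical polynomials are defined.

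By Proposition \ref{pro:trop-support} applied to $F_1F_2$, we have $(F_1F_2)[{\bf r}]=h_{\mathsf P(F_1F_2)}({\bf r})$. By Proposition \ref{pro:GKZ}, $\mathsf P(F_1F_2)=\mathsf P(F_1)+\mathsf P(F_2)$. Theorem \ref{thm:h-additive} then gives
\[h_{\mathsf P(F_1)+\mathsf P(F_2)}({\bf r})=h_{\mathsf P(F_1)}({\bf r})+h_{\mathsf P(F_2)}({\bf r}).\]
Applying Proposition \ref{pro:trop-support} once more, now to $F_1$ and to $F_2$, identifies the right-hand side with $F_1[{\bf r}]+F_2[{\bf r}]$. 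This proves the claim.

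There is no real obstacle, since the cited results do all the work. The only point needing care is that the Newton polytopes involved are genuine non-empty polytopes, so that the support functions make sense; this holds because each of $F_1$, $F_2$ and $F_1F_2$ is non-zero.
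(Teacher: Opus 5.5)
Your proof is correct and matches the paper's argument: Proposition~\ref{pro:GKZ} gives the Minkowski sum, Theorem~\ref{thm:h-additive} gives additivity of support functions, and Proposition~\ref{pro:trop-support} identifies support functions with tropical polynomials. Your observation that $F_1F_2\neq 0$, because $\mathbb Z[y_1,\ldots,y_n]$ is an integral domain, is a small point the paper leaves implicit.
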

\begin{proof}
By Proposition \ref{pro:GKZ}, we know that  $\mathsf P(F_1F_2)=\mathsf P(F_1)+\mathsf P(F_2)$. Then by Theorem \ref{thm:h-additive}, we have
\[h_{\mathsf P(F_1F_2)}=h_{\mathsf P(F_1)+\mathsf P(F_2)}=h_{\mathsf P(F_1)}+h_{\mathsf P(F_2)}.\] Then the desired result follows from Proposition \ref{pro:trop-support}.
\end{proof}

\subsection{Cluster algebras}
We first recall Fomin-Zelevinsky's  matrix mutation \cite{fz_2002}. 

\begin{definition}[Matrix mutation]Let $A=(a_{ij})$ be an $m\times n$ integer matrix. For any integer $k$ with $k\leq m$ and $k\leq n$, the  {\em mutation} of $A$
   in direction $k$ is
defined to be the new integer matrix $\mu_k(A)=A'=(a_{ij}')$ given by
\begin{eqnarray}\label{eqn:b-mutation}
a_{ij}^\prime&=&\begin{cases}-a_{ij}, & \text{if}\;i=k\;\text{or}\;j=k,\\
 a_{ij}+[a_{ik}]_+[a_{kj}]_+-[-a_{ik}]_+[-a_{kj}]_+,&\text{otherwise},\end{cases}\nonumber
\end{eqnarray}
where $[a]_+:=\max\{a,0\}$ for any $a\in\mathbb R$.
\end{definition}


Now we fix a positive integer $n$ and denote by $[1,n]:=\{1,2,\ldots,n\}$. An $n\times n$ integer matrix $B$ is said to be {\em skew-symmetrizable}, if there exists a diagonal integer matrix $D=diag(d_1,\ldots,d_n)$ with each $d_i>0$ such that $DB$ is skew-symmetric. Such a diagonal matrix $D$ is called a {\em skew-symmetrizer} of $B$.

\begin{proposition}[{\cite{fz_2002}}]
 (i) For any integer matrix $A=(a_{ij})_{m\times n}$, we have $\mu_k^2(A)=A$.

 (ii) If $B=(b_{ij})_{n\times n}$ is skew-symmetrizable, then $B':=\mu_k(B)$ is still skew-symmetrizable and the two matrices $B, B'$ share the same skew-symmetrizers.
\end{proposition}

 A {\em seed}  in  $\mathbb F:=\mathbb Q(z_1,\ldots,z_n)$ is a pair $({\bf x},B)$, where
\begin{itemize}
    \item ${\bf x}=(x_1,\ldots,x_n)$ is an ordered set of free generators of $\mathbb F$ over $\mathbb Q$;
    \item $B=(b_{ij})$ is an $n\times n$ skew-symmetrizable matrix. 
\end{itemize}

\begin{definition}[Seed mutation]  Let $({\bf x},B)$ be a seed in $\mathbb F$. The {\em mutation}  of  $({\bf x},B)$ in direction $k\in[1,n]$ is the new seed  $({\bf x}', B')=\mu_k({\bf x}, B)$ given by $ B'=\mu_k(B)$ and
\begin{eqnarray}
\label{eqn:x-mutation}
 x_i^\prime=\begin{cases}x_i,&
 i\neq k,\\
 x_k^{-1}\cdot (\prod_{j=1}^nx_j^{[b_{jk}]_+}+\prod_{j=1}^nx_j^{[-b_{jk}]_+}),&i= k.\end{cases}\nonumber
\end{eqnarray}

\end{definition}
It can be checked that  $\mu_k$ is an involution. Let $\mathbb T_n$ denote the $n$-regular tree. We
 label the edges of $\mathbb T_n$ by $1,\ldots, n$ such that the $n$ different edges adjacent to the same vertex of $\mathbb T_n$ receive different labels.
 
\begin{definition}[Cluster pattern]  A {\em cluster pattern}  $\mathcal S_X=\{({\bf x}_t, B_t)\mid t\in \mathbb T_n\}$   is an assignment of a seed $({\bf x}_t,  B_t)$ in $\mathbb F$ to every vertex $t$ of $\mathbb T_n$ such that $({\bf x}_{t'}, B_{t'})=\mu_k({\bf x}_t,  B_t)$  whenever
	\begin{xy}(0,1)*+{t}="A",(10,1)*+{t'}="B",\ar@{-}^k"A";"B" \end{xy} in $\TT_n$. 
\end{definition}

Usually, we fix a vertex $t_0\in\mathbb T_n$ as the rooted vertex of $\mathbb T_n$. The seed of a cluster pattern at the rooted vertex $t_0$ is called an {\em initial seed}. We call
${\bf x}_t$ and $B_t$ the {\em cluster} and {\em exchange matrix} at the vertex $t\in\mathbb T_n$ and write
 ${\bf x}_t=(x_{1;t},\ldots,x_{n;t})$ and $B_t=(b_{ij}^t)$. Elements in clusters are called {\em cluster variables}. 

The {\em cluster algebra} $\mathcal A$ associated to a cluster pattern $\mathcal S_X=\{({\bf x}_t, B_t)\mid t\in \mathbb T_n\}$ is the $\mathbb Z$-subalgebra of $\mathbb F=\mathbb Q(z_1,\ldots,z_n)$ given by
 $$\mathcal A=\mathbb Z[x_{1;t},\ldots,x_{n;t}\mid t\in\mathbb T_n].$$

 A {\em cluster monomial} $u$ of $\mathcal A$ is a monomial in cluster variables from the same cluster, i.e., $$u={\bf x}_t^{\bf h}=x_{1;t}^{h_1}\cdots x_{n;t}^{h_n}$$ for some vertex $t\in\mathbb T_n$ and ${\bf h}=(h_1,\ldots,h_n)^T\in\mathbb N^n$.
\begin{theorem}[Laurent phenomenon and separation formula
\cite{fz_2002,fomin_zelevinsky_2007}]
Let $\mathcal A$ be a cluster algebra with initial seed
$({\bf x}_{t_0}, B_{t_0})$. Then the following statements hold.

\begin{itemize}
    \item [(i)] Any cluster monomial $u$ can be written as a Laurent polynomial in  $\mathbb Z[x_{1;t_0}^{\pm 1},\ldots,x_{n;t_0}^{\pm 1}]$.
    \item [(ii)]  Denote by   $\hat y_{k;t_0}={\bf x}_{t_0}^{B_{t_0}{\bf e}_k}$. The Laurent polynomial in (i) has a canonical expression
\begin{eqnarray}\label{eqn:x-gF}
   u={\bf x}_{t_0}^{{\bf g}_{u}^{t_0}}F_{u}^{t_0}(\hat y_{1;t_0},\ldots,\hat y_{n;t_0})\;\;\;\in\;\; \mathbb Z[x_{1;t_0}^{\pm 1},\ldots,x_{n;t_0}^{\pm 1}],\nonumber
\end{eqnarray}
where $g_{u}^{t_0}\in\mathbb Z^n$ and $F_{u}^{t_0}({\bf y})\in\mathbb Z[y_1,\ldots,y_n]$ are canonically defined from principal cluster algebras \cite[(6.4) \& (3.3)]{fomin_zelevinsky_2007}.
\end{itemize}
\end{theorem}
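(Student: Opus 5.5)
This statement is quoted from \cite{fz_2002,fomin_zelevinsky_2007}, so the plan is to reproduce the Fomin--Zelevinsky argument. Since a cluster monomial is a product of cluster variables from one cluster, and products of Laurent polynomials are Laurent, both (i) and (ii) reduce to the case of a single cluster variable $x_{i;t}$. The multiplicativity needed in (ii) holds because $g$-vectors add and $F$-polynomials multiply under products of cluster variables from a common cluster.

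For (i), I would argue by induction along the path from $t_0$ to $t$ in $\mathbb T_n$ using the caterpillar lemma of \cite{fz_2002}. The approach is to show that every cluster variable lies in $\mathbb Z[x_{1;t_0}^{\pm1},\ldots,x_{n;t_0}^{\pm1}]$ by comparing three consecutive mutations $\mu_k\mu_j\mu_k$. The key inputs are two facts:
\begin{itemize}
\item exchange polynomials in adjacent seeds are coprime, which uses skew-symmetrizability to control the signs $b_{jk}$ versus $b_{kj}$;
\item the Laurent ring is a UFD, so divisibility by $x_k$ can be checked after inverting the other variables.
\end{itemize}
I expect this coprimality and gcd bookkeeping to be the main technical obstacle. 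Everything else is formal.

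For (ii), I would pass to the cluster algebra with principal coefficients at $t_0$, extending $B_{t_0}$ to $\begin{bmatrix}B_{t_0}\\ I_n\end{bmatrix}$. Step (i), applied with frozen variables $y_1,\ldots,y_n$, gives that each cluster variable is some $X_{i;t}\in\mathbb Z[x^{\pm1}][y]$; here polynomiality in $y$ follows from the same induction, since the $y$'s never get inverted. The steps are then:
\begin{enumerate}
\item Define the $\mathbb Z^n$-grading $\deg x_i={\bf e}_i$ and $\deg y_j=-B_{t_0}{\bf e}_j$. Every exchange binomial is then homogeneous, so by induction each $X_{i;t}$ is homogeneous. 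Set ${\bf g}_{u}^{t_0}$ to be its degree.
\item Set $F_u^{t_0}:=X_{i;t}|_{x_1=\cdots=x_n=1}$.
\item Homogeneity forces $X_{i;t}={\bf x}^{{\bf g}}F(\hat y_1,\ldots,\hat y_n)$, where $\hat y_j=y_j{\bf x}^{B_{t_0}{\bf e}_j}$, because substituting $y_j\mapsto \hat y_j$ makes every monomial $y$-degree-zero.
\item Specialize to an arbitrary coefficient semifield via the separation formula
\[
x_{i;t}=\frac{{\bf x}^{{\bf g}}F|_{\mathcal F}(\hat y)}{F|_{\mathbb P}(y)},
\]
which I would prove by checking that both sides satisfy the same mutation recursion. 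This check uses that $F$ is subtraction-free: it is a ratio of subtraction-free expressions by the exchange relations, so evaluation in a semifield makes sense.
\end{enumerate}

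For the coefficient-free algebra $\mathcal A$ here, $\mathbb P$ is trivial, so the denominator is $1$ and $\hat y_j$ becomes ${\bf x}_{t_0}^{B_{t_0}{\bf e}_j}$. This yields exactly the displayed expression. Canonicity follows because ${\bf g}$ and $F$ are defined intrinsically from the principal-coefficient algebra.
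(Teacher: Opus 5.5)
The paper gives no proof of this statement; it only quotes Fomin--Zelevinsky, and your outline (caterpillar lemma, principal coefficients, $\mathbb Z^n$-grading, separation formula) follows their route. However, the key input you name for (i) is false in the coefficient-free setting where you apply it.

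The three-step comparison along $t\overset{i}{-}t'\overset{j}{-}t''\overset{i}{-}t'''$ needs more than a general coprimality statement. Let $P=P_{t,i}$, let $Q$ be the exchange polynomial in direction $j$ at $t'$, and let $Q_0=Q|_{x_i=0}$. The argument needs $P$ and $Q_0$ to be coprime, since this is what makes $P$ coprime to the new variable $x_{j;t''}$ in the Laurent ring. Sign considerations only dispose of the case $b^t_{ij}\neq 0$. There $x_i$ occurs in exactly one monomial of $Q$, so $Q_0$ is a monomial.

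If $b^t_{ij}=0$, then $Q=Q_0=P_{t,j}$, and without coefficients this can equal $P_{t,i}$. Take the skew-symmetric matrix $B=\begin{bmatrix}0&0&1\\0&0&1\\-1&-1&0\end{bmatrix}$. Both exchange polynomials in directions $1$ and $2$ are $1+x_3$, and $\mu_1\mu_2\mu_1(\mathbf x)=\mu_2(\mathbf x)$. Writing $x_k'=(1+x_3)/x_k$ for the new variable of $\mu_k(\mathbf x)$, the element
\[\frac{x_1x_2}{1+x_3}=\frac{x_2}{x_1'}=\frac{x_1}{x_2'}\]
is Laurent in both $\mu_1(\mathbf x)$ and $\mu_2(\mathbf x)$ but not in $\mathbf x$. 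So Laurentness ``from both sides'' gives nothing, and your induction step breaks.

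The repair is to reorder the argument.
\begin{itemize}
\item Run the caterpillar lemma only for principal coefficients, over the ground UFD $\mathbb Z[y_1,\ldots,y_n]$.
\item There the extended matrix $\begin{bmatrix}B_t\\ C^{t_0}_t\end{bmatrix}$ has full column rank for every $t$, because mutation preserves rank.
\item Hence, when $b^t_{ij}=0$, the binomials $P_{t,i}$ and $P_{t,j}$ have linearly independent exponent vectors and are coprime.
\end{itemize}
This rank argument, not skew-symmetrizability, is the missing idea. Your steps for (ii) then go through. Statement (i) for $\mathcal A$ then follows from (ii) by specializing $y\mapsto 1$, since $\mathbf x^{\mathbf g}F(\hat y)$ is visibly a Laurent polynomial; it does not need a separate proof.

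Two smaller points in (ii):
\begin{itemize}
\item Homogeneity of the exchange binomials at non-initial seeds is not automatic. It must be carried through the same induction, which amounts to proving $G^{t_0}_tB_t=B_{t_0}C^{t_0}_t$ (equivalently, that every $\hat y_{j;t}$ has degree $0$).
\item In step 3 the reason is that a monomial $\mathbf x^{\mathbf a}\mathbf y^{\mathbf c}$ of degree $\mathbf g$ satisfies $\mathbf a=\mathbf g+B_{t_0}\mathbf c$. Hence it equals $\mathbf x^{\mathbf g}\hat{\mathbf y}^{\mathbf c}$.
\end{itemize}
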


\begin{definition}[$g$-vector, $G$-matrix and $F$-polynomial] Let $u$ be a cluster monomial of $\mathcal A$ and keep the notations above.
\begin{itemize}
\item [(i)] The integer vector ${\bf g}_{u}^{t_0}\in\mathbb Z^n$ is called the {\em $g$-vector} of $u$ with respect to (the seed at) vertex $t_0$. 
\item [(ii)]The  matrix 
    $G_t^{t_0}=({\bf g}_{x_{1;t}}^{t_0},\ldots,{\bf g}_{x_{n;t}}^{t_0})$ 
    is called the {\em $G$-matrix} of $({\bf x}_t, B_t)$ with respect to vertex $t_0$.
\item [(iii)] The polynomial $F_{u}^{t_0}\in\mathbb Z[y_1,\ldots,y_n]$ is called the {\em $F$-polynomial} of $u$ with respect to  vertex $t_0$.
\end{itemize}
\end{definition}

\begin{example}[Cluster algebra of type $A_2$]
\label{ex:A2}
   Take $B=\begin{bmatrix}
    0&1\\-1&0
\end{bmatrix}$ and ${\bf x}=(x_1,x_2)$. 
It is easy to  check that the  cluster algebra $\mathcal A$ defined by the initial seed $({\bf x}, B)$  has only five (unlabeled) clusters \[\{x_1,x_2\},\;\{x_2,x_3\},\;\{x_3,x_4\},\;\{x_4,x_5\},\;\{x_5,x_1\},\quad \text{where}\]
\[x_3:=   \frac{x_2+1}{x_1}, \;x_4:=   \frac{x_1+x_2+1}{x_1x_2}, \;x_5:=  \frac{x_1+1}{x_2}.\]
The canonical expressions of the three non-initial cluster variables with respect to the initial seed $({\bf x}, B)$ are given as follows:
    \[x_3={x_1^{-1}x_2\cdot(1+\widehat y_1),}\;\;\;  x_4={x_1^{-1}\cdot (1+\widehat y_1+\widehat y_1\widehat y_2),}\;\;\;
    x_5={x_2^{-1}\cdot (1+\widehat y_2)},\]
    where ${\hat y}_1={\bf x}^{B{\bf e}_1}=x_2^{-1}$ and ${\hat y}_2={\bf x}^{B{\bf e}_2}=x_1$.
\end{example}

Let $\mathcal A$ be a cluster algebra whose seed at vertex $t\in\mathbb T_n$ is denoted by $({\bf x}_t,B_t)$. Let $t_0$ and $t$ be two vertices of $\mathbb T_n$ and $\overleftarrow{\mu}$ the mutation sequence corresponding to the unique path from the vertex $t_0$ to $t$ in $\mathbb T_n$.
 We apply the mutation sequence $\overleftarrow{\mu}$ to $\begin{pmatrix} B_{t_0}\\ I_{n}\end{pmatrix}$, then the
 resulting matrix $\overleftarrow{\mu}\begin{pmatrix} B_{t_0}\\ I_{n}\end{pmatrix}$ takes the form $\begin{pmatrix} B_t\\  C_t^{t_0}\end{pmatrix}$ for some $n\times n$ integer matrix $C_{t}^{t_0}$.
 
 \begin{definition}[$C$-matrix and $c$-vector]
 Keep the above notations. We call the $n\times n$ integer matrix $C_t^{t_0}$ the {\em $C$-matrix} of $({\bf x}_t, B_t)$ with respect to vertex $t_0\in\mathbb T_n$, whose columns are called {\em $c$-vectors}.
\end{definition}

\begin{theorem}[{\cite{GHKK18},\cite{NZ12}}] \label{thm:GHKK}
Let $\mathcal A$ be a cluster algebra with initial seed $({\bf x}_{t_0}, B_{t_0})$. The following statements hold.
\begin{itemize}
    \item [(i)] For any vertex $t\in \mathbb{T}_n$, we have $(G_t^{t_0})^TDC_t^{t_0}D^{-1}=I_n$, where $D$ is a skew-symmetrizer for the exchange matrices of $\mathcal A$.
    \item[(ii)]  Each row vector of a $G$-matrix $G_t^{t_0}$ is either non-negative or non-positive.
    \item [(iii)] Each column vector of a $C$-matrix $C_t^{t_0}$ is either non-negative or non-positive.
\item[(iv)] The $F$-polynomial
$F_{u}^{t_0}({\bf y})$ of a cluster monomial $u$ is a polynomial in $\mathbb Z_{\geq 0}[y_1,\ldots,y_n]$ with constant term $1$.
\end{itemize}
\end{theorem}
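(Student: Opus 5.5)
Since this is a foundational result quoted from \cite{GHKK18} and \cite{NZ12}, I would not reprove it from scratch. The plan is to isolate the one genuinely deep input, sign-coherence, and derive everything else from it by the formal mutation recursions of \cite{fomin_zelevinsky_2007} and the duality results of \cite{NZ12}. The order is (iii) first, then (iv) for the constant term, then (i) and (ii) by duality, and finally positivity in (iv) by a separate deep input.

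\textbf{Step 1: statement (iii).} I would derive sign-coherence of $c$-vectors from the scattering diagram approach of \cite{GHKK18}. One builds the consistent cluster scattering diagram $\mathfrak D$ attached to $B_{t_0}$ in $\mathbb R^n$. One then shows that the chambers reached from the positive chamber $\mathcal C^+$ by wall crossings form the cluster complex. The chamber corresponding to $t$ is the cone spanned by the columns of $G_t^{t_0}$. The walls bounding it have normal vectors which, after rescaling by $D$, are the $c$-vectors. Every wall of $\mathfrak D$ has a normal vector lying in $\mathbb N^n$ or $-\mathbb N^n$, because of how $\mathfrak D$ is constructed from the incoming walls $\mathbf e_k^\perp$ via Kontsevich--Soibelman. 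Hence each column of $C_t^{t_0}$ is either non-negative or non-positive. This step is where all the difficulty lies.

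\textbf{Step 2: constant term in (iv).} By \cite[Prop.~5.6]{fomin_zelevinsky_2007}, the condition that every $F$-polynomial has constant term $1$ is equivalent to sign-coherence of $c$-vectors. The argument is an induction along the tree using the recursion for $F$-polynomials. Sign-coherence ensures that at each mutation exactly one of the two terms in the exchange relation tropicalizes to $0$, so the constant term propagates as $1$.

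\textbf{Step 3: statements (i) and (ii).} Assuming (iii) for both $B_{t_0}$ and its Langlands dual $-B_{t_0}^T$, I would follow \cite{NZ12}. First verify the tropical duality $(G_t^{t_0})^TDC_t^{t_0}D^{-1}=I_n$ by induction on the distance from $t_0$ to $t$. Under mutation at $k$, both $G$ and $C$ change by explicit piecewise-linear formulas. Sign-coherence of the $k$-th $c$-vector, with sign $\varepsilon$, lets one write both changes as multiplication by matrices $J_k+[\varepsilon B]$-type corrections, one on each side. These corrections are mutually inverse-transposes after conjugation by $D$, so the identity is preserved. Row sign-coherence of $G_t^{t_0}$ then follows from (i) together with column sign-coherence of the $C$-matrices of the dual pattern. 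Concretely, (i) identifies the rows of $G_t^{t_0}$, up to $D$, with the $c$-vectors of the pattern with initial matrix $-B_{t_0}^T$, read at the seed $t_0$ relative to initial seed $t$.

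\textbf{Step 4: positivity in (iv).} Positivity of the coefficients needs the second deep input from \cite{GHKK18}. Cluster monomials are theta functions, and theta functions are generating functions of broken lines whose coefficients are positive, by positivity of wall-crossing functions. Specializing to principal coefficients gives $F_u^{t_0}\in\mathbb Z_{\ge0}[\mathbf y]$.

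The main obstacle is Step 1, the construction and consistency of $\mathfrak D$, together with the identification of its chambers with $G$-cones. Everything else is bookkeeping with mutation formulas.
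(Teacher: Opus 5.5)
Your overall route is the paper's: the paper gives no proof of this theorem and simply quotes \cite{GHKK18} and \cite{NZ12}. Steps 1, 2 and 4 are fair summaries of those cited inputs.

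Your induction for (i) is also sound. To put the $g$-vector recursion in matrix form you need the unconditional identity $G_tB_t=B_{t_0}C_t$ from \cite{fomin_zelevinsky_2007}. With that, and with $\varepsilon$ the sign of the $k$-th $c$-vector, one gets $G_{t'}=G_tP$ and $C_{t'}=C_tQ$, where $P=J_k+[-\varepsilon B_t]_+^{\bullet k}$ and $Q=J_k+[\varepsilon B_t]_+^{k\bullet}$ (keeping only the $k$-th column, resp.\ row, of the positive part). Skew-symmetrizability gives $P^T=DQD^{-1}$, and $(DQD^{-1})^2=I$, so $P^TDQD^{-1}=I$ and the identity propagates.

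The step that fails as written is your derivation of (ii).

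\textbf{What (i) actually gives.} Statement (i) only says $G_t^{t_0}=D^{-1}\bigl((C_t^{t_0})^{-1}\bigr)^TD$. So it turns row sign-coherence of $G_t^{t_0}$ into column sign-coherence of $(C_t^{t_0})^{-1}$. Nothing in (i), nor in the mutation recursions along paths starting at $t_0$, shows that $(C_t^{t_0})^{-1}$ is (a $D$-conjugate of) a $C$-matrix of some cluster pattern.

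\textbf{What is missing.} That identification is a second, separate duality from \cite{NZ12}: $G_t^{t_0}=(\check C^{t}_{t_0})^T$, equivalently $C_t^{t_0}=(\check G^{t}_{t_0})^T$. Here $\check C,\check G$ are computed in the pattern with initial matrix $B_{t_0}^T$, with the base point moved from $t_0$ to $t$. The paper itself quotes this identity as \cite[(1.13)]{NZ12} in the proof of Proposition~\ref{prop:C-positive}. Because it compares tropical data from two different initial seeds, it is not a formal consequence of (i), and you must cite or prove it.

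\textbf{An alternative fix.} You can read (ii) directly off your Step 1. The coordinate hyperplanes ${\bf e}_i^\perp$ carry the incoming walls of $\mathfrak D$, so no $G$-cone can cross them. That is exactly row sign-coherence.

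\textbf{Sign convention.} With this paper's conventions the dual pattern is the $B^T$ one, not $-B^T$. In type $A_2$ with $t=\mu_1(t_0)$, the rows of $G_t^{t_0}$ are $(-1,0)$ and $(1,1)$. These are (transposes of) the $c$-vectors at $t_0$ of the $B_t^T$-pattern based at $t$. The $(-B_t^T)$-pattern, which here equals $B_t$, gives $(-1,0)^T$ and $(0,1)^T$ instead. Since (iii) holds for every skew-symmetrizable matrix, this slip is harmless once the correct duality is invoked, and no $D$-twist is needed in it.
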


\begin{definition}[Green and red mutation] 
Let $\mathcal A$ be a cluster algebra with initial seed $({\bf x}_{t_0}, B_{t_0})$. A seed mutation $\mu_k({\bf x}_t, B_t)$ in $\mathcal A$ is called a {\em green mutation}, if the $k$-th column of the $C$-matrix $C_t=C_t^{t_0}$ is a non-negative vector. Otherwise, it is called a {\em red mutation}.
\end{definition}

Denote by $[{\bf x}_t]$ the cluster ${\bf x}_t=(x_{1;t},\cdots,x_{n;t})$ up to permutations, that is, $[\mathbf{x}_t]=\{x_{1;t},\cdots,x_{n;t}\}$.
\begin{proposition}\label{prop:C-positive}
  Let $\mathcal A$ be a cluster algebra initial seed $({\bf x}_{t_0}, B_{t_0})$. If the $C$-matrix $C_t^{t_0}$ is a non-negative matrix, then $[{\bf x}_t]=[{\bf x}_{t_0}]$.
\end{proposition}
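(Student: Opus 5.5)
The plan is to use the duality between $G$-matrices and $C$-matrices in Theorem \ref{thm:GHKK}(i), together with the known fact that the $G$-matrix determines the cluster. Suppose $C_t^{t_0}$ is non-negative. By Theorem \ref{thm:GHKK}(i), $(G_t^{t_0})^T = D (C_t^{t_0})^{-1} D^{-1}$. Both $C_t^{t_0}$ and $G_t^{t_0}$ are integer matrices whose inverses are also integer matrices. Indeed, $(C_t^{t_0})^{-1}=D^{-1}(G_t^{t_0})^TD$ is integral because entries scale by ratios $d_j/d_i$, while the product with $C$ gives $I_n$. A cleaner route is the known formula $(C_t^{t_0})^{-1}=C_{t_0}^{t}$ up to the transposition/skew-symmetrizer twist; I will avoid that and argue directly instead.

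First I show that $C:=C_t^{t_0}$ is a permutation matrix. By sign-coherence (Theorem \ref{thm:GHKK}(iii)) and non-negativity, every column is a non-zero non-negative integer vector, since $C$ is invertible. Its inverse is $D^{-1}(G_t^{t_0})^TD$. By Theorem \ref{thm:GHKK}(ii) every row of $G_t^{t_0}$ is sign-coherent, so every column of $(G_t^{t_0})^T$ is sign-coherent, and hence so is every column of $C^{-1}$ (conjugation by a positive diagonal matrix preserves signs).

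Now write $C^{-1}C=I_n$ and fix a column $\mathbf{c}$ of $C$. For each row $\mathbf{r}$ of $C^{-1}$, the product $\langle \mathbf r,\mathbf c\rangle$ is $0$ or $1$. Rows of $C^{-1}$ need not be sign-coherent, so instead I use $CC^{-1}=I_n$ with columns: a column $\mathbf{w}$ of $C^{-1}$ is sign-coherent and $C\mathbf w=\mathbf e_j$. If $\mathbf w\le 0$, then $C\mathbf w\le 0$ because $C\ge 0$, which is a contradiction. So $\mathbf w\ge 0$, and hence $C^{-1}\ge 0$.

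A non-negative matrix with a non-negative inverse is a monomial matrix, that is, a permutation matrix times a positive diagonal matrix. Since both $C$ and $C^{-1}$ are integral, the diagonal entries are $1$, so $C$ is a permutation matrix $P_\sigma$. Then $G_t^{t_0}=(D C^{-1}D^{-1})^T$ is also a permutation matrix. Here I check that $D P_\sigma^{T} D^{-1}$ has entries $d_i/d_{\sigma(i)}$, which must be integers, and likewise for the inverse; so they equal $1$, and $G_t^{t_0}$ is a permutation matrix.

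Thus the $g$-vectors of $x_{1;t},\dots,x_{n;t}$ are $\mathbf e_{\sigma(1)},\dots,\mathbf e_{\sigma(n)}$, which are exactly the $g$-vectors of the initial cluster variables. Since different cluster monomials (in particular, different cluster variables) have different $g$-vectors, as recalled in the introduction from \cite{DWZ10} and GHKK \cite{GHKK18}, each $x_{i;t}$ equals $x_{\sigma(i);t_0}$. Hence $[\mathbf x_t]=[\mathbf x_{t_0}]$.

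The main obstacle is the step $C^{-1}\ge 0$: it uses sign-coherence of the columns of $C^{-1}$, which is inherited from the row sign-coherence of $G$, combined with the positivity of $C$.
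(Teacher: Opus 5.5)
Your proof is correct in outline, and its first half takes a genuinely different route from the paper.

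\textbf{The paper's route.} The paper passes to the dual cluster algebra $\check{\mathcal A}$ with exchange matrix $B_{t_0}^T$. It uses the Nakanishi--Zelevinsky identity $C_t^{t_0}=(\check G_{t_0}^t)^T$ to turn non-negativity of $C$ into non-negativity of a $G$-matrix of $\check{\mathcal A}$. It then applies the scattering-diagram fact that a non-negative $G$-matrix is the positive chamber, and it does this twice: once in $\check{\mathcal A}$ to show that $C_t^{t_0}$ is a permutation matrix, and once in $\mathcal A$ after deducing $G_t^{t_0}\ge 0$ from Theorem \ref{thm:GHKK}(i).

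\textbf{Your route.} You replace the dual algebra and the first chamber argument with linear algebra. Row sign-coherence of $G_t^{t_0}$ makes every column of $C^{-1}=D^{-1}(G_t^{t_0})^TD$ sign-coherent. Then $C\mathbf w=\mathbf e_j$ with $C\ge 0$ forces each such column to be non-negative. The monomial-matrix lemma then gives that $C$ is a permutation matrix. You finish with injectivity of $g$-vectors on cluster variables instead of the chamber structure.

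\textbf{Comparison.} Your argument uses only Theorem \ref{thm:GHKK}(i),(ii) plus $g$-vector injectivity, and never leaves $\mathcal A$. The paper's argument is shorter once the scattering-diagram picture is taken for granted, and it makes the geometric content (the positive chamber) explicit. Both ultimately rest on GHKK.

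\textbf{A justification that needs repair.} Your reason for the integrality of $C^{-1}$ (``entries scale by ratios $d_j/d_i$'') does not work. The entries $g_{ji}d_j/d_i$ of $D^{-1}(G_t^{t_0})^TD$ are not obviously integers. The correct one-line reason is to take determinants in Theorem \ref{thm:GHKK}(i). This gives $\det G_t^{t_0}\cdot \det C_t^{t_0}=1$ with both factors integers, so both are $\pm 1$, and hence $C^{-1}$ and $(G_t^{t_0})^{-1}$ are integral.

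This is exactly what you need in two places:
\begin{itemize}
\item to force the diagonal part of the monomial matrix $C$ to be the identity;
\item for your ``likewise for the inverse'' step, which shows $d_i=d_{\sigma(i)}$ and hence that $G_t^{t_0}$ is a permutation matrix.
\end{itemize}
For the second point there is also an alternative: the positive integers $d_i/d_{\sigma(i)}$ multiply to $1$ over each cycle of $\sigma$, so each equals $1$. With this fix the argument is complete.
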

\begin{proof}
 Let $\check{\mathcal A}$ be the cluster algebra with initial seed $({\bf z}_{t_0}, B_{t_0}^T)$ and we write $({\bf z}_t, B_t^T)$ for its seed at vertex $t$. We use $\check G_{t_2}^{t_1}$ to denote the $G$-matrix of $({\bf z}_{t_2}, B_{t_2}^T)$ with respect to vertex $t_1$. By \cite[(1.13)]{NZ12}, we have $C_{t}^{t_0}=(\check G_{t_0}^t)^T$. Since $C_t^{t_0}$ is a non-negative matrix, we know that $\check G_{t_0}^t$ is a non-negative matrix. Now we take $({\bf z}_t, B_t^T)$ as the initial seed of $\check{\mathcal A}$. The non-negative $G$-matrix $\check G_{t_0}^t$ corresponds to the positive chamber in the scattering diagram \cite{GHKK18} of $\check{\mathcal A}$. This implies 
 $[{\bf z}_{t_0}]=[{\bf z}_t]$. So $\check G_{t_0}^t$ is a permutation matrix. Thus $C_{t}^{t_0}$ is also a permutation matrix. Therefore, $(C_{t}^{t_0})^{-1}$ is a permutation matrix. In particular, it is a non-negative matrix.
Then by Theorem \ref{thm:GHKK} (i), we see that the $G$-matrix $G_t^{t_0}=D(C_{t}^{t_0})^{-1}D^{-1}$ of $\mathcal A$ is a non-negative matrix. So it corresponds to the positive chamber in the scattering diagram of $\mathcal A$. Thus $[{\bf x}_t]=[{\bf x}_{t_0}]$.
\end{proof}

\subsection{Bongartz completion in cluster algebras} A {\em partial cluster} of $\mathcal A$ is a subset of some cluster of $\mathcal A$. Recall that we denote by $[\mathbf{x}_t]=\{x_{1;t},\cdots,x_{n;t}\}$.  

\begin{definition}[Left and right Bongartz completion, \cite{CGY-2023}]
\label{def:completion}
Let $\mathcal A$ be a cluster algebra and $U$ a partial cluster of $\Acal$.
\begin{itemize}
\item[(i)] A cluster $[{\bf x}_s]$ is called the {\em left Bongartz completion} of $U$ with respect to a vertex $t_0\in\mathbb T_n$ if the following two conditions hold.
\begin{itemize}
 \item [(a)] $U$ is a subset of $[\xx_s]$;
 \item[(b)] The $i$-th column of the $C$-matrix $C_s^{t_0}$ is a non-negative vector for any $i$ such that $x_{i;s}\notin U$.
\end{itemize}
\item[(ii)] A cluster $[{\bf x}_s]$ is called the {\em right Bongartz completion} of $U$ with respect to vertex $t_0\in\mathbb T_n$ if the following two conditions hold.
\begin{itemize}
 \item [(a$'$)] $U$ is a subset of $[\xx_s]$;
 \item[(b$'$)] The $i$-th column of the $C$-matrix $C_s^{t_0}$ is a non-positive vector for any $i$ such that $x_{i;s}\notin U$.
\end{itemize}
\end{itemize}
\end{definition}
\begin{remark}
  The left Bongartz completion and right Bongartz completion  are called the Bongartz completion and Bongartz co-completion in \cite{CGY-2023}. 
\end{remark}

Notice that both the existence and uniqueness of left and right Bongartz completion in cluster algebras are not clear from their own definitions. Let us look at the case  $U=\emptyset$. In this case,  we have the following facts:
\begin{itemize}
    \item A cluster $[{\bf x}_s]$ is the left Bongartz completion of $U=\emptyset$ with respect to vertex $t_0$ if and only if the $C$-matrix $C_s^{t_0}$ is a non-negative matrix, which implies that $[{\bf x}_s]=[{\bf x}_{t_0}]$, by Proposition \ref{prop:C-positive}.
    \item A cluster $[{\bf x}_s]$ is the right Bongartz completion of $U=\emptyset$ with respect to vertex $t_0$ if and only if the $C$-matrix $C_s^{t_0}$ is a non-positive matrix. Notice that such a $C$-matrix exists if and only if the exchange matrix $B_{t_0}$ has a green-to-red sequence in the sense of \cite{Muller16}. In particular, this implies that the right Bongartz completion might not exist in general.
\end{itemize}

\begin{theorem}[{\cite[Theorem 4.15]{CGY-2023}}] \label{thm:CGY-completion}
Let $\mathcal A$ be a cluster algebra with initial seed $({\bf x}_{t_0}, B_{t_0})$.
Then for any partial cluster $U$ of $\mathcal A$, there exists a unique cluster $[{\bf x}_s]$ such that $[{\bf x}_s]$ is the left Bongartz completion of $U$ with respect to vertex $t_0$.
\end{theorem}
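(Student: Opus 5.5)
We will work in the $g$-vector fan, using the sign-coherence and duality statements of Theorem \ref{thm:GHKK}, and translate condition (b) of Definition \ref{def:completion} into a geometric condition on a single point. For a vertex $s$ with cone $\mathcal C_s=\mathbb R_{\geq 0}$-span of the columns of $G_s^{t_0}$, Theorem \ref{thm:GHKK}(i) gives $(G_s^{t_0})^{-1}=D^{-1}(C_s^{t_0})^TD$. Hence the $i$-th coordinate of a vector ${\bf w}$ in the basis of $g$-vectors of $[{\bf x}_s]$ is
\[
\lambda_i({\bf w})=d_i^{-1}\langle {\bf c}_{i;s},D{\bf w}\rangle,
\]
where ${\bf c}_{i;s}$ is the $i$-th column of $C_s^{t_0}$. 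Put ${\bf g}_U:=\sum_{x\in U}{\bf g}_x^{t_0}$ and consider the test point
\[
{\bf q}_\varepsilon:={\bf g}_U+\varepsilon D^{-1}(1,\ldots,1)^T
\]
for small $\varepsilon>0$. The key step is the equivalence: for a cluster $[{\bf x}_s]\supseteq U$, $[{\bf x}_s]$ is the left Bongartz completion of $U$ if and only if ${\bf q}_\varepsilon\in\mathcal C_s$ for all sufficiently small $\varepsilon>0$. Indeed, for $x_{i;s}\in U$ the coordinate $\lambda_i({\bf q}_\varepsilon)$ is close to a positive number. For $x_{i;s}\notin U$ we have $\lambda_i({\bf q}_\varepsilon)=\varepsilon d_i^{-1}\langle{\bf c}_{i;s},(1,\ldots,1)^T\rangle$, which by sign-coherence (Theorem \ref{thm:GHKK}(iii)) is $\geq 0$ exactly when ${\bf c}_{i;s}$ is non-negative. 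As a sanity check, for $U=\emptyset$ this recovers Proposition \ref{prop:C-positive}.

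\emph{Uniqueness} then follows from the GHKK fact (used already in the proof of Proposition \ref{prop:C-positive}) that distinct clusters have $g$-cones with disjoint interiors. Two left Bongartz completions $[{\bf x}_s],[{\bf x}_{s'}]$ both contain ${\bf q}_\varepsilon$ for all small $\varepsilon$. By the computation above, ${\bf q}_\varepsilon$ has all coordinates strictly positive for non-zero $c$-vectors, so it lies in the interior of $\mathcal C_s$ and of $\mathcal C_{s'}$. Hence $[{\bf x}_s]=[{\bf x}_{s'}]$.

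\emph{Existence} is the main obstacle, since the $g$-fan need not be complete and it is not a priori clear that ${\bf q}_\varepsilon$ lies in any cluster cone. My plan is a descent argument. Start from any cluster $[{\bf x}_t]\supseteq U$. While some index $k$ with $x_{k;t}\notin U$ has ${\bf c}_{k;t}\leq 0$, perform the red mutation $\mu_k$. This keeps $U$ in the cluster, and by the formula for $\lambda_k$ it crosses the wall ${\bf c}_{k;t}^\perp$ towards the side containing ${\bf q}_\varepsilon$. To show termination, I would first try reducing to the rank $n-|U|$ cluster algebra obtained by freezing $U$, so that only finitely many relevant walls meet a small neighbourhood of ${\bf q}_\varepsilon$ inside the star of the cone of $U$. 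If that fails, I would instead use a GHKK-style argument in the scattering diagram: the segment from an interior point of $\mathcal C_t$ to ${\bf q}_\varepsilon$ stays in the star of $\mathrm{cone}(U)$ and crosses only finitely many cluster walls before reaching a chamber containing ${\bf q}_\varepsilon$.
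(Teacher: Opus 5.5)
The paper does not prove this statement; it quotes it from \cite[Theorem 4.15]{CGY-2023}. The first half of your argument is sound. The identity $(G_s^{t_0})^{-1}=D^{-1}(C_s^{t_0})^TD$, the formula for $\lambda_i$, and the equivalence between condition (b) and ${\bf q}_\varepsilon\in\mathcal C_s$ all check out. Uniqueness then follows from the fan property, exactly as in the proof of Proposition \ref{prop:C-positive}.

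\textbf{The gap is existence.} By your own equivalence, existence is the assertion that ${\bf q}_\varepsilon$ lies in some cluster cone, and neither of your mechanisms establishes this.

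\emph{The red descent need not terminate.} Take $B_{t_0}=\begin{pmatrix}0&2\\-2&0\end{pmatrix}$ and $U=\emptyset$. The seed $t_1$ with $g$-vectors $(-1,0)^T,(-2,1)^T$ corresponds to the Kronecker tilting module $P_1\oplus\tau^{-1}P_2$. Its $c$-vectors are $(-1,-2)^T$ and $(0,1)^T$, so it has exactly one red direction. The same is true at every seed reached from it by red mutations: these are the preprojective tilting modules, each with a unique left mutation. Their cones accumulate on the ray $\mathbb R_{>0}(-1,1)^T$, which lies in no cluster cone. So your algorithm started at $t_1$ is forced and never reaches $t_0$, even though every step crosses a wall ``towards'' ${\bf q}_\varepsilon$.

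\emph{The fallbacks do not help.} In the same example, every segment from an interior point of $\mathcal C_{t_1}$ to ${\bf q}_\varepsilon\in\mathbb R^2_{>0}$ meets that ray. It therefore leaves the star of $\mathrm{cone}(U)$ and crosses infinitely many walls. The claim ``only finitely many walls near ${\bf q}_\varepsilon$'' just restates that ${\bf q}_\varepsilon$ is interior to a chamber. Freezing $U$ does not reduce you to the case $U=\emptyset$ in smaller rank either. After freezing, the columns of $C_s^{t_0}$ behave as a sign-coherent coefficient block, not as principal coefficients at a seed of the frozen algebra.

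\textbf{A way to close the gap: move the reference seed instead of the cluster.}

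\emph{Key observation.} Let $t_0'$ be joined to $t_0$ by an edge labelled $j$. Using $\mu_j^2=\mathrm{id}$, $C_s^{t_0'}$ is the bottom block obtained by applying the path from $t_0$ to $s$ to $\mu_j\begin{pmatrix}B_{t_0'}\\ I_n\end{pmatrix}=\begin{pmatrix}B_{t_0}\\ M\end{pmatrix}$, where $M$ differs from $I_n$ only in row $j$. Each bottom row is mutated by a rule involving only that row and the exchange matrix. Hence $C_s^{t_0'}$ and $C_s^{t_0}$ agree outside row $j$. By sign-coherence and invertibility, a non-negative column of one is non-negative in the other unless it is supported in row $j$, and at most one column is.

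\emph{Induction along a path.} Choose $t^*$ with $U\subseteq[{\bf x}_{t^*}]$. Since $C_{t^*}^{t^*}=I_n$, the cluster $[{\bf x}_{t^*}]$ satisfies (a) and (b) with respect to $t^*$. Now move the reference seed from $t^*$ to $t_0$ one edge at a time.
\begin{itemize}
\item At each step, at most one non-$U$ column becomes a negative multiple of ${\bf e}_j$.
\item Mutate the cluster in that direction; this keeps $U$, since the direction lies outside $U$. The bad column becomes a positive multiple of ${\bf e}_j$.
\item The mutation changes the other columns only in row $j$, because the mutated $c$-vector is supported in row $j$.
\item The other non-$U$ columns are non-negative and, by invertibility, not multiples of ${\bf e}_j$. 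So each keeps a positive entry off row $j$ and stays non-negative by sign-coherence.
\end{itemize}
After finitely many steps you obtain a cluster satisfying (a) and (b) with respect to $t_0$. Together with your uniqueness argument, this proves the theorem.
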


\begin{remark}
Note that the right Bongartz completion in cluster algebras might not exist in general. But if it exists, it is unique, by \cite[Corollary 5.5]{CGY-2023}. For this paper, we only need to use the left Bongartz completion, which always exists by Theorem \ref{thm:CGY-completion}. 
\end{remark}

Although the remainder of this subsection is not used in the rest of the paper, it is included to provide an analogue of Proposition \ref{pro-minmax} in $\tau$-tilting theory.

Recall that for a nonzero polynomial
$F({\bf y})=\sum_{{\bf v}\in\mathbb N^n}c_{\bf v}{\bf y}^{\bf v}\in\mathbb Z[y_1,\ldots,y_n]$, its tropical polynomial $F[-]:\mathbb R^n\to \mathbb R$ is defined by
\[F[{\bf r}]:=\max\{\langle{{\bf v},{\bf r}}\rangle\mid c_{\bf v}\neq 0\}.\]
  Denote by $\mathcal X$ the set of  cluster variables of $\mathcal A$. Given a cluster monomial $u={\bf x}_t^{\bf h}=\prod x_{i;t}^{h_i}$ in seed $({\bf x}_t, B_t)$, we define its {\em dominant set} with respect to the initial seed $({\bf x}_{t_0}, B_{t_0})$ as follows:
\[\dom^{t_0}(u)=\{z\in\mathcal X\mid F_z^{t_0}[D{\bf g}_u^{t_0}]=0\}.\]
Since the $F$-polynomials of initial cluster variables are $1$, we see that the initial cluster variables are always contained in $\dom^{t_0}(u)$.

\begin{definition}[Dominant set of a seed]
    Let  $({\bf x}_t, B_t)$ be a seed of $\mathcal A$. The {\em dominant set} $\dom^{t_0}[t]$ of $({\bf x}_t, B_t)$ with respect to the initial seed $({\bf x}_{t_0}, B_{t_0})$ is defined to be the dominant set of the  multiplicity free cluster monomial $u_t=\prod_{i=1}^nx_{i;t}$ with full support,  that is,
    \[
    \dom^{t_0}[t] \coloneqq   \dom^{t_0}(u_t)=\{z\in\mathcal X\mid F_z^{t_0}[D{\bf g}_{u_t}^{t_0}]=0\}.
    \]
\end{definition}
\begin{remark}
    The dominant sets are introduced in \cite{Cao-2023} as a replacement of the torsion classes in $\tau$-tilting theory. The non-initial cluster variables in the dominant set $\dom^{t_0}[t]$ correspond to the indecomposable $\tau$-rigid modules contained in the torsion class $\Fac M=\prescript{\bot}{}({\tau M})\cap P^\bot$ for a $\tau$-tilting pair $(M,P)$ in $\tau$-tilting theory,   \confer \cite[Proposition 7.23]{Cao-2023} or Corollary \ref{cor:dom-hom} in this paper.
\end{remark}

\begin{theorem}[\cite{cao-2026a}]
\label{thm:dom-set}
Let $\mathcal A$ be a cluster algebra with initial seed $({\bf x}_{t_0}, B_{t_0})$ and $U$ a partial cluster of $\mathcal A$. Then the following statements hold.
\begin{itemize}
    \item [(i)]    
    A cluster $[{\bf x}_s]$ is the left Bongartz completion of $U$ with respect to vertex $t_0$ if and only if $U\subseteq [{\bf x}_s]$ and  $\dom^{t_0}[s]\subseteq \dom^{t_0}[t]$ for any cluster $[{\bf x}_t]$ with $U\subseteq [{\bf x}_t]$.
    
   \item [(ii)]  A cluster $[{\bf x}_s]$ is the right Bongartz completion of $U$ with respect to vertex $t_0$ if and only if $U\subseteq [{\bf x}_s]$ and $\dom^{t_0}[t]\subseteq \dom^{t_0}[s]$ for any cluster $[{\bf x}_t]$ with $U\subseteq [{\bf x}_t]$. 
   \item[(iii)] Suppose that $[{\bf x}_{s^-}]$ is the left Bongartz completion and $[{\bf x}_{s^+}]$ is the right Bongartz completion of $U$ with respect to vertex $t_0$. Let $[{\bf x}_t]$ be a cluster of $\mathcal A$. Then $U\subseteq [{\bf x}_t]$ if and only if 
   \[ \dom^{t_0}[s^-]\subseteq \dom^{t_0}[t]\subseteq \dom^{t_0}[s^+].\]
\end{itemize}
\end{theorem}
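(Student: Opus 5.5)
The plan is to turn statements about dominant sets into statements about the position of $D{\bf g}_{u_t}^{t_0}$ in the $g$-vector fan (equivalently, the chambers of the scattering diagram of \cite{GHKK18}). Then I would compare seeds through chains of green and red mutations.

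\textbf{Step 1 (monotonicity under one mutation).} I would first prove the following: if $\mu_k({\bf x}_t,B_t)=({\bf x}_{t'},B_{t'})$ is a green mutation, then $\dom^{t_0}[t]\subsetneq\dom^{t_0}[t']$, and dually a red mutation strictly shrinks the dominant set. The tool is Corollary \ref{cor-F-prod} together with sign-coherence (Theorem \ref{thm:GHKK}). Since $F_z[D(\cdot)]$ is piecewise linear and non-negative, the function $F_z[D(\cdot)]$ vanishes on a union of cones of the fan; I would check that it is either identically zero or positive on the interior of each maximal cone. For a green mutation at $k$, the wall between the two cones $G_t$ and $G_{t'}$ is crossed in the direction of the positive $c$-vector ${\bf c}_{k;t}$. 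Tropical positivity then shows that the zero locus can only grow when this wall is crossed. Strictness comes from $z=x_{k;t'}$: it lies in $\dom^{t_0}[t']$ because it is compatible with $t'$, but not in $\dom^{t_0}[t]$ because the green $c$-vector makes $F_{x_{k;t'}}$ positive there.

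\textbf{Step 2 (parts (i) and (ii)).} Let $[{\bf x}_s]$ be the left Bongartz completion of $U$ given by Theorem \ref{thm:CGY-completion}, and let $[{\bf x}_t]\supseteq U$ be any cluster. I would show that $t$ is connected to $s$ by a sequence of red mutations in directions outside $U$. To do this I would pass to the cluster algebra obtained by freezing $U$, where every seed contains $U$. In that algebra, $s$ becomes the seed whose $C$-matrix restricted to the non-$U$ directions is non-negative. Reversing the path and using the (reduced) analogue of Proposition \ref{prop:C-positive}, the mutations from $s$ toward $t$ can be taken to be green. Step 1 then gives $\dom^{t_0}[s]\subseteq\dom^{t_0}[t]$.

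For the converse, suppose $s'$ contains $U$ and also has the minimality property. Then $\dom^{t_0}[s']=\dom^{t_0}[s]$. I would show that the dominant set determines the cluster, since $[{\bf x}_t]$ consists of the variables in $\dom^{t_0}[t]$ that become non-dominant after a single mutation. This gives $s'=s$. Part (ii) is the dual argument, using non-positive columns and the assumed existence of the right completion.

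\textbf{Step 3 (part (iii)).} The implication ``$\Rightarrow$'' follows from (i) and (ii). For ``$\Leftarrow$'', I would introduce the co-dominant set $\{z\mid F_z^{t_0}[-D{\bf g}_{u_t}^{t_0}]\text{ is minimal}\}$, or equivalently use the opposite seed. I would then show that a variable $x$ lies in $[{\bf x}_t]$ if and only if it is dominant for $t$ and co-dominant for $t$, in the same way that $M$ is Ext-projective in $\Fac M$. The lower inclusion $\dom^{t_0}[s^-]\subseteq\dom^{t_0}[t]$ puts $U$ in the dominant side, and the upper inclusion $\dom^{t_0}[t]\subseteq\dom^{t_0}[s^+]$ gives co-dominance of $U$. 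Together these force $U\subseteq[{\bf x}_t]$.

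\textbf{Main obstacle.} The hardest step is the reduction in Step 2 together with the converse in Step 3. It requires a cluster-theoretic replacement for $\tau$-tilting reduction, namely that freezing $U$ is compatible with $C$-matrices and with the dominance order, and a dominant/co-dominant characterization of cluster membership. My first attempt would be to use the scattering diagram: the cones containing the face spanned by ${\bf g}_U$ form the star of that face, and within that star the green/red orientation is inherited from the global one.
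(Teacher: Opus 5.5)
The paper only quotes this theorem and gives no proof of it, so I assess your argument on its own. Write $DG_t$ for the cone spanned by the columns of $DG^{t_0}_t$, and $\mathbf c_{j;t}$ for the $j$-th column of $C^{t_0}_t$.

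\textbf{Step 1 is sound, once made precise.} By Theorem~\ref{thm:GHKK}(i) we have $\langle D\mathbf g^{t_0}_{x_{i;t}},\mathbf c_{j;t}\rangle=d_j\delta_{ij}$. Hence crossing from $DG_t$ into $DG_{t'}$ means moving in the direction $-\mathbf c_{k;t}\le 0$. The function $F_z^{t_0}[-]$ is coordinatewise non-decreasing (its exponents are non-negative), non-negative, and linear on each cone $DG_t$ (Proposition~\ref{pro:F-additive}, plus homogeneity and continuity). So vanishing persists across a green wall, and strictness is Proposition~\ref{prop:exchange-pair}.

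\textbf{Step 2 has a genuine gap.} You claim that every seed containing $U$ is joined to the left Bongartz completion $s$ by green mutations in directions outside $U$. This is false outside finite type. Take $B_{t_0}=\begin{pmatrix}0&2\\-2&0\end{pmatrix}$ and $U=\emptyset$, so $s=t_0$. Applying $\mu_2$ and then $\mu_1$ gives two green mutations, ending at a seed $t_2$ with $C^{t_0}_{t_2}=-I_2$. The exchange graph is a bi-infinite path, so every walk from $t_0$ to $t_3=\mu_2(t_2)$ crosses the edge $t_2\to t_3$, and that edge is red. Step~1 then only gives $\dom^{t_0}[t_0]\subsetneq\dom^{t_0}[t_2]\supsetneq\dom^{t_0}[t_3]$, which does not compare $\dom^{t_0}[t_0]$ with $\dom^{t_0}[t_3]$. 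This is the cluster counterpart of the fact that the preprojective $\tau$-tilting pairs of the Kronecker algebra cannot be reached from $(0,A)$ by right mutations. The same obstruction appears for non-empty $U$, for example when the seeds containing $U$ form a Kronecker-type path. So (i) and (ii) are not established.

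Separately, your recovery of $[\mathbf x_t]$ from $\dom^{t_0}[t]$ is wrong. In Example~\ref{ex:A2} with $D=I_2$, take the cluster $\{x_4,x_5\}$. There $\mathbf g^{t_0}_{u_t}=(-1,-1)^T$, so every cluster variable is dominant. Mutating $x_4$ to $x_1$ gives $\mathbf g=(1,-1)^T$, and $F_{x_3}[(1,-1)^T]=1$. So $x_3$ is also expelled, although $x_3\notin\{x_4,x_5\}$.

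\textbf{How to repair it.} Apply your Step~1 mechanism once, locally at the common face, rather than along a path. Put $\mathbf p=\sum_{x\in U}D\mathbf g^{t_0}_x$.

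For (i): $DG_s=\{\mathbf r\mid\langle\mathbf r,\mathbf c_{j;s}\rangle\ge 0\ \forall j\}$, and the only inequalities tight at $\mathbf p$ are those with $x_{j;s}\notin U$, whose $c$-vectors are non-negative. Hence $\mathbf p+\mathbf w\in DG_s$ for all small $\mathbf w\in\mathbb R^n_{\ge0}$. Let $z\in\dom^{t_0}[s]$, let $U\subseteq[\mathbf x_t]$, and let $\mathbf q\in DG_t$ be near $\mathbf p$. Taking $\mathbf w$ to be the positive part of $\mathbf q-\mathbf p$, monotonicity gives $0\le F^{t_0}_z[\mathbf q]\le F^{t_0}_z[\mathbf p+\mathbf w]=0$. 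So $F^{t_0}_z[-]$ vanishes at interior points of $DG_t$, hence on all of $DG_t$ by linearity, i.e. $z\in\dom^{t_0}[t]$.

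For (ii), with $s$ now the right completion: $\mathbf p-\epsilon\mathbf 1$ is interior to $DG_s$, because the non-$U$ $c$-vectors are non-positive and non-zero. For $z\in\dom^{t_0}[t]$ we get $0\le F_z^{t_0}[\mathbf p-\epsilon\mathbf 1]\le F_z^{t_0}[\mathbf p]=0$.

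The converses need no injectivity of $\dom$. Suppose $s'\supseteq U$ is minimal (resp. maximal) but some $x_{j;s'}\notin U$ has a red (resp. green) $c$-vector. Then $\mu_j(s')$ still contains $U$, and by strict Step~1 its dominant set is strictly smaller (resp. larger), a contradiction. Sign-coherence then yields the column condition of Definition~\ref{def:completion}.

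Finally, (iii) needs no co-dominant sets. For $x\in U$, the chain $x\in\dom^{t_0}[s^-]\subseteq\dom^{t_0}[t]$ gives $F^{t_0}_x[D\mathbf g^{t_0}_{x_{i;t}}]=0$. Also $x_{i;t}\in\dom^{t_0}[t]\subseteq\dom^{t_0}[s^+]$ together with $x\in[\mathbf x_{s^+}]$ gives $F^{t_0}_{x_{i;t}}[D\mathbf g^{t_0}_x]=0$. Hence $(x\mid\mid x_{i;t})_F=0$ for all $i$, and Theorem~\ref{thm:F-compatible} forces $x\in[\mathbf x_t]$.
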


\section{$\tau$-tilting theory}
\subsection{Decorated modules and $\tau$-tilting pairs}
 We fix a finite dimensional basic algebra $A$ over an algebraically closed field $\mathbf{k}$.  Denote by $\mod A$ the category of finitely generated left $A$-modules, and by $\tau$ the Auslander-Reiten translation in $\mod A$.  The  isomorphism classes of indecomposable projective modules in $\mod A$ are denoted by $P_1,\ldots,P_n$.

Given two modules $M, N\in\mod A$, we denote by
\begin{itemize}
\item $\hom_A(M,N):=\dim_{\mathsf k} \Hom_A(M,N)$.
\item $|M|$ the number of non-isomorphic indecomposable direct summands of $M$.
\item $\add M$  the additive closure of $M$ in $\mod A$.
\item $\Fac M$ the subcategory of $\mod A$ consisting of the quotient modules of the modules in $\add M$.
\item $\Sub M$ the subcategory of $\mod A$ consisting of the submodules of the modules in $\add M$.
\item $\prescript{\bot}{}{M} \coloneqq   \{X\in \mod A  \mid \Hom_{A}(X,M)=0\}$.
\item $M^{\bot} \coloneqq   \{Y\in \mod A  \mid \Hom_{A}(M,Y)=0\}$.

\end{itemize}

A pair $\mathcal M=(M,P)$ of modules in $\mod A$ is called a {\em decorated module} of $A$, if $P$ is a projective $A$-module. The modules $M$ and $P$ are respectively called the {\em positive part} and {\em negative part} of $\mathcal M$.
A decorated $A$-module $\mathcal M=(M,P)$ is called {\em negative}, if $M=0$.

\begin{remark}
Note that the negative parts of decorated modules in our definition are projective $A$-modules, whereas the negative parts of decorated modules used in \cite{DWZ10} are semisimple $A$-modules. Clearly, these two types of decorated $A$-modules are in bijection with each other. 
\end{remark}

Let $\mathcal M=(M,P)$ be a decorated $A$-module.
Let $M=\oplus_{i\in I} M_i^{a_i}$ and $P=\oplus_{j\in J}P_j^{b_j}$ be the indecomposable direct sum decompositions of $M$ and $P$. Then we write $\mathcal M=\oplus_{k\in I\sqcup J}\mathcal M_k$, where 
\[
\mathcal M_k=
\begin{cases}
 (M_k,0),& \text{if }k\in I,\\
 (0,P_k),&\text{if } k\in J.
\end{cases}
\]
A decorated $A$-module $\mathcal M=(M,P)$ is called {\em basic}, if both $M$ and $P$ are basic $A$-module. The {\em direct sum} of two decorated $A$-modules  $\mathcal M=(M,P)$ and $\mathcal N=(N,Q)$ are defined as follows:
\[\mathcal M\oplus \mathcal N:=(M\oplus N,P\oplus Q).\]

A module $M\in\mod A$ is called {\em $\tau$-rigid}, if $\Hom_A(M,\tau M)=0$.

\begin{definition}[$\tau$-rigid pair and $\tau$-tilting pair]
Let $\mathcal M=(M,P)$ be a decorated $A$-module.
\begin{itemize}

\item[(i)]  $\mathcal M=(M,P)$ is called \emph{$\tau$-rigid} if $M$ is $\tau$-rigid and $\Hom_A(P,M)=0$. 
\item[(ii)]  $\mathcal M=(M,P)$ is called \emph{$\tau$-tilting} (resp. {\em almost $\tau$-tilting}) if $\mathcal M=(M,P)$ is $\tau$-rigid and  \[|M|+|P|=|A|\;\;\; (\text{resp.}\;\;  |M|+|P|=|A|-1).\]
\end{itemize}
\end{definition}
We always consider modules, decorated modules up to isomorphism. In a basic $\tau$-tilting pair $(M,P)$, it is known from \cite[Proposition 2.3]{air_2014} that $P$ is uniquely determined by $M$.

\begin{theorem}[\cite{air_2014}*{Theorems 2.12, 2.18}] \label{thm-air-mutation}
Let $\mathcal U=(U,Q)$ be a basic $\tau$-rigid pair. Then the following statments hold.
\begin{itemize}
    \item [(i)] We have $\Fac U\subseteq  \prescript{\bot}{}{(\tau U)}\cap Q^\bot$. The equality holds if and only if $\mathcal U=(U,Q)$ is $\tau$-tilting.
    \item[(ii)] Suppose that $\mathcal U=(U,Q)$ is almost $\tau$-tilting. Then $\Fac U\subsetneq \prescript{\bot}{}{(\tau U)}\cap Q^\bot$ and
there exist exactly two basic
$\tau$-tilting pairs $\mathcal M=(M,P)$ and $\mathcal M'=(M^\prime,P^\prime)$ containing $\mathcal U=(U,Q)$ as a direct summand. Moreover,
$$\{\Fac M,\;\Fac M^\prime\}=\{\Fac U,\;\prescript{\bot}{}{(\tau U)}\cap Q^\bot\}.$$
In particular, either $\Fac M\subsetneq \Fac M^\prime$ or
$\Fac M^\prime\subsetneq\Fac M$ holds.
\end{itemize}
\end{theorem}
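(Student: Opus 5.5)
The plan is to reduce both parts to the correspondence between functorially finite torsion classes and the Ext-projective objects in them, using the Auslander–Reiten formula as the basic tool.

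\emph{Inclusion in (i).} For the inclusion $\Fac U\subseteq \prescript{\bot}{}{(\tau U)}\cap Q^\bot$, take $X\in\Fac U$ with an epimorphism $U^m\twoheadrightarrow X$. Any nonzero map $X\to\tau U$ composes to a nonzero map $U^m\to\tau U$, contradicting $\Hom_A(U,\tau U)=0$. The same argument with $\Hom_A(Q,U)=0$, together with projectivity of $Q$ (maps $Q\to X$ lift to $Q\to U^m$), gives $X\in Q^\bot$.

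\emph{Bongartz completion.} Next set $\mathcal T:=\prescript{\bot}{}{(\tau U)}\cap Q^\bot$. I would show that $\mathcal T$ is a functorially finite torsion class, by the AR formula $\Ext^1_A(U,X)\cong D\overline{\Hom}_A(X,\tau U)$ and a universal extension of $A/\langle e_Q\rangle$ by $U$. I would then show that its Ext-projectives $P(\mathcal T)$ form a support $\tau$-tilting module $T$ containing $U$, with $P(\mathcal T)$ and $Q$ together giving $|A|$ summands. For the count I would invoke the standard fact that the Ext-projectives of a sincere-on-$\mathcal T$ functorially finite torsion class generate it and that their number equals the number of simples not annihilated by $\mathcal T$. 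If $(U,Q)$ is already $\tau$-tilting, this count forces $T=U$, so $\mathcal T=\Fac T=\Fac U$. Conversely, if $\Fac U=\mathcal T$, then $U$ is an Ext-projective generator of $\mathcal T$, hence $\add U=\add P(\mathcal T)$, and the count shows that $(U,Q)$ is $\tau$-tilting.

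\emph{Part (ii).} Now let $(U,Q)$ be almost $\tau$-tilting. By (i) the inclusion $\Fac U\subsetneq\mathcal T$ is strict. Two $\tau$-tilting completions are immediate. The first is the Bongartz completion $P(\mathcal T)$ from the previous step, together with $Q$. The second comes from $\Fac U$, a functorially finite torsion class whose Ext-projectives $P(\Fac U)=U$ are one short of a $\tau$-tilting count; here the missing summand is an extra projective in the negative part, namely the $P_j$ with $\Hom_A(P_j,\Fac U)=0$ not already in $Q$. These two completions are distinct, since their $\Fac$'s differ.

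\emph{Main obstacle: no third completion.} The hard step is to show that any completion $(M,P)=(U\oplus X,Q)$ or $(U,Q\oplus P_j)$ has $\Fac M\in\{\Fac U,\mathcal T\}$. Since $\Fac M=\prescript{\bot}{}{(\tau M)}\cap P^\bot$ sits between $\Fac U$ and $\mathcal T$, it suffices to show there is no torsion class strictly in between of the form $\Fac M$. I would take the indecomposable complement $X$ with $X\notin\Fac U$. I would then argue that $X$ is Ext-projective in $\mathcal T$: for $Y\in\mathcal T$, any nonzero map $Y\to\tau X$ would, via the minimal left $\add U$-approximation $X\to U'\to Y'\to0$ (the exchange sequence), produce a nonzero map into $\tau U$ or violate $\tau$-rigidity of $U\oplus X$. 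Hence $X\in P(\mathcal T)$, and so $\add M=\add P(\mathcal T)$. If instead $X\in\Fac U$, then $\Fac M=\Fac U$. Since a basic $\tau$-tilting pair is recovered from $\Fac M$ as its Ext-projectives (and $P$ is then determined by $M$), this gives exactly two completions. It also gives the equality $\{\Fac M,\Fac M'\}=\{\Fac U,\mathcal T\}$, and the strict inclusion follows from (i).
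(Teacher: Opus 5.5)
The paper does not prove this statement; it quotes it from AIR. Measured against the standard argument, your framework is the right one: Bongartz completion, Ext-projectives, and the bijection with functorially finite torsion classes. However, one claim is false, and the decisive step of (ii) is not established.

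\emph{The false claim.} In (ii) you assert $P(\Fac U)=U$, and that the completion with $\Fac=\Fac U$ is obtained by adding to $Q$ the $P_j$ with $\Hom_A(P_j,\Fac U)=0$. The same belief drives the converse of (i), where you pass from ``$U$ is an Ext-projective generator of $\mathcal T$'' to $\add U=\add P(\mathcal T)$. Both fail in general.
\begin{itemize}
\item Take the paper's example $A=\mathbf{k}(1\to 2)$ and the almost $\tau$-tilting pair $(U,Q)=(P_1,0)$. Then $\Fac P_1=\add(P_1\oplus S_1)$.
\item $S_1$ is Ext-projective there, since $\Ext^1_A(S_1,P_1)\cong D\overline{\Hom}_A(P_1,P_2)=0$ and $\Ext^1_A(S_1,S_1)=0$.
\item Hence $P(\Fac U)=P_1\oplus S_1\neq U$, and no $P_j$ is orthogonal to $\Fac P_1$.
\item The completion with $\Fac=\Fac U$ is $(P_1\oplus S_1,0)$. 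It adds a non-projective module to the positive part, not a projective to the negative part. This is exactly why the left Bongartz completion is in general larger than $U$.
\end{itemize}
The repair for (ii) is to take $(P(\Fac U),P')$, where $\add P'$ is spanned by the $P_j$ with $\Hom_A(P_j,\Fac U)=0$. This pair is $\tau$-tilting by the torsion-class bijection. It contains $(U,Q)$ because $U$ is Ext-projective in $\Fac U\subseteq{}^{\perp}(\tau U)$ and $\Hom_A(Q,\Fac U)=0$.

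For the converse of (i) you must actually use the hypothesis $\Fac U={}^{\perp}(\tau U)\cap Q^{\perp}$. Let $X$ be an indecomposable Ext-projective of $\mathcal T$, and let $0\to K\to U_0\to X\to 0$ come from a right $\add U$-approximation.
\begin{itemize}
\item A pushout argument gives $\Ext^1_A(U,K')=0$ for every quotient $K'$ of $K$.
\item By the Auslander--Smal{\o} criterion, $\Hom_A(K,\tau U)=0$. Also $\Hom_A(Q,K)=0$.
\item Hence $K\in\mathcal T=\Fac U$, so the sequence splits and $X\in\add U$.
\end{itemize}

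\emph{The unproved step.} The step you call the main obstacle is not carried out. Your mechanism is that a nonzero map $Y\to\tau X$ with $Y\in\mathcal T$ yields, via the left $\add U$-approximation of $X$, a nonzero map into $\tau U$ or a failure of $\tau$-rigidity. This never uses that $(U\oplus X,Q)$ is $\tau$-tilting, and without that hypothesis the conclusion is false. Take $U=Q=0$ and $X=S_1$ over $\mathbf{k}(1\to 2)$:
\begin{itemize}
\item $X\notin\Fac U$ and $\mathcal T=\mod A$.
\item $\mathrm{id}\colon P_2=\tau X\to\tau X$ is a nonzero map from an object of $\mathcal T$. It produces no map into $\tau U=0$ and does not contradict $\tau$-rigidity of $S_1$.
\item Yet $S_1$ is not Ext-projective in $\mod A$.
\end{itemize}
So almost-completeness has to enter somewhere. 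A correct reduction is as follows. Let $(U\oplus X_0,Q)$ be the Bongartz completion, so $\mathcal T=\Fac(U\oplus X_0)$ with a single summand $X_0$ outside $\add U$. It suffices to prove $\Hom_A(X_0,\tau X)=0$. Then $X_0\in\Fac(U\oplus X)\subseteq\mathcal T$ is Ext-projective in the smaller class, so $X_0\in\add(U\oplus X)$, and therefore $X_0\cong X$.

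This vanishing is the real content of the uniqueness in AIR's Theorem 2.18. It needs an argument exploiting that $\End_A(U\oplus X_0)/[\add U]$ is local, with $\mathcal T\cap U^{\perp}$ equivalent to modules over it. For instance, $\tau$-tilting reduction then leaves no torsion class strictly between $\Fac U$ and $\mathcal T$. Nothing in your sketch supplies this.
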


\begin{definition}[Left and right mutation]
Keep the notations in Theorem \ref{thm-air-mutation}. The operation $(M,P)\mapsto(M^\prime,P^\prime)$ is called a \emph{mutation} of $(M,P)$. If $\Fac M\subsetneq \Fac M^\prime$ holds, we call $(M^\prime, P^\prime)$ a {\em right mutation} of $(M,P)$. If $\Fac M^\prime \subsetneq \Fac M$ holds, we call $(M^\prime, P^\prime)$ a {\em left mutation} of $(M,P)$.
\end{definition}

\begin{proposition}[{\cite[Theorem 2.35]{air_2014}}] \label{pro-air}
Let $\mathcal M=(M,P)$ and $\mathcal N=(N,Q)$ be two basic $\tau$-tilting pairs with $\Fac N\subsetneqq\Fac M$. Then there exists a left mutation $\mathcal M'=(M^\prime,P^\prime)$ of $\mathcal M=(M,P)$ such that $\Fac N \subseteq\Fac M'\subsetneq \Fac M$.
\end{proposition}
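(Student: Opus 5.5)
We prove Proposition \ref{pro-air} by induction on the size of the interval, using the finite structure of $\tau$-tilting pairs and the characterization of torsion classes from Theorem \ref{thm-air-mutation}. Since $\Fac N\subsetneq \Fac M$, we have $M\notin \Fac N$; otherwise $\add M\subseteq \Fac N$ and hence $\Fac M\subseteq \Fac N$. Write $M=\oplus_{i} M_i$ as a sum of indecomposables. Our aim is to find an indecomposable summand $X$ of $\mathcal M$ such that the almost $\tau$-tilting pair $\mathcal U=\mathcal M/X$ satisfies $\Fac N\subseteq \Fac U$. Once we have it, Theorem \ref{thm-air-mutation}(ii) gives the other completion $\mathcal M'$ with $\{\Fac M,\Fac M'\}=\{\Fac U,\prescript{\bot}{}{(\tau U)}\cap Q^\bot\}$. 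Because $\mathcal U=(U,Q)$ is a summand of $\mathcal M$, Theorem \ref{thm-air-mutation}(i) applied to $\mathcal M$ gives $\Fac M=\prescript{\bot}{}{(\tau M)}\cap P^\bot\subseteq \prescript{\bot}{}{(\tau U)}\cap Q^\bot$.

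We now split according to which of the two torsion classes $\Fac M$ equals. If $\Fac M=\Fac U$, then $\mathcal M'$ is a right mutation and is not what we want. So we must choose $X$ with $\Fac U\subsetneq \Fac M$, that is, $X=(M_i,0)$ with $M_i\notin\Fac(M/M_i)$. In that case $\Fac M=\prescript{\bot}{}{(\tau U)}\cap Q^\bot$ and $\Fac M'=\Fac U\supseteq\Fac N$, which is exactly the required left mutation.

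The key step is therefore to produce an index $i$ such that $M_i\notin \Fac(M/M_i)$ and $\Fac N\subseteq \Fac(M/M_i)$. The plan is to use the torsion pair $(\Fac N, N^{\bot})$ together with the canonical sequence $0\to tM\to M\to fM\to 0$. Here $fM\neq 0$, since $M\notin\Fac N$. Take a minimal left $\add(M)$-approximation and consider the summands $M_i$ that admit a nonzero map to $N^{\bot}$, or more precisely the summands not in $\Fac N$. Among these we want one that is "split-projective" in $\Fac M$, meaning not generated by the other summands. We expect this to follow from the Bongartz-type fact that $\mathcal T:=\Fac N$ being contained in $\Fac M$ forces $\mathcal T\subseteq \Fac(M')$, where $M'$ is the sum of the summands of $M$ lying in $\mathcal T$ together with $P(\mathcal T)$. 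Concretely, we would compare $M$ with the Ext-projectives of $\mathcal T$: since $N\in\Fac M$ and $M$ is Ext-projective in $\Fac M$, every summand of $N$ is generated by $M$. A minimal-degree argument on the approximation $M^{a}\to N$ should then show that some summand $M_i\notin\mathcal T$ is redundant for generating $N$ while being nonredundant in $M$.

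The main obstacle is this existence step, namely finding $i$ with $\Fac N\subseteq\Fac(M/M_i)\subsetneq\Fac M$. If the direct argument stalls, the fallback is the original AIR route. One uses the order-theoretic fact that the poset of support $\tau$-tilting pairs ordered by $\Fac$ has, for $\mathcal N<\mathcal M$, a left mutation of $\mathcal M$ that stays $\geq\mathcal N$. This is proved by choosing $X$ among the summands of $\mathcal M$ not in $\add$ of the Bongartz-type completion of $\mathcal N$ in $\Fac M$, and verifying via the Hom-vanishing $\Hom(N,\tau M_i)=0$ (from $N\in\Fac M\subseteq\prescript{\bot}{}{(\tau M)}$) that $N\in \prescript{\bot}{}{(\tau U)}\cap Q^\bot$. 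Then $\Fac N\subseteq$ the smaller of the two torsion classes, which is $\Fac U$ once $X$ is chosen as in the previous step.
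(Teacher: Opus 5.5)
Your reduction is right: a left mutation $\mathcal M'$ with $\Fac N\subseteq\Fac M'$ corresponds exactly to an indecomposable summand $M_k$ of $M$ with $M_k\notin\Fac(M/M_k)$ and $N\in\Fac(M/M_k)$, and then $\Fac M'=\Fac(M/M_k)$. However, this is only a restatement of the proposition, and the proposal never produces such a $k$.

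\begin{itemize}
\item The ``Bongartz-type fact'' is vacuous. By Theorem~\ref{thm:air-torsion}(iii), $\mathcal P(\Fac N)=N$. So the module you build from the summands of $M$ lying in $\Fac N$ together with $\mathcal P(\Fac N)$ simply has $\Fac$ equal to $\Fac N$. It says nothing about which summand of $M$ to drop.
\item ``Split projective and outside $\Fac N$'' does not suffice. Take the path algebra of $1\to 2$ with $\mathcal M=(P_1\oplus P_2,0)$ and $\mathcal N=(S_1,P_2)$. Both $P_1$ and $P_2$ are split projective in $\Fac M=\mod A$ and lie outside $\Fac N=\add S_1$. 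But $\mu_{P_1}(\mathcal M)=(P_2,P_1)$ has $\Fac P_2=\add P_2\not\ni S_1$, so only $P_2$ works. The choice of $k$ must genuinely use $\mathcal N$, and the ``minimal-degree argument'' meant to do this is not supplied.
\item The fallback is circular. The ``order-theoretic fact'' you quote is the proposition itself. The Hom-vanishing you verify holds for every choice of $X$, since it follows from $N\in\Fac M$. Writing $\mathcal U=(U,Q_U)$, it only places $N$ in the larger class $\prescript{\bot}{}{(\tau U)}\cap Q_U^{\bot}\supseteq\Fac M$; nothing transfers it to the smaller class $\Fac U$.
\item No induction on the interval is available. $[\Fac N,\Fac M]$ can be infinite, e.g.\ $0\subsetneq\mod A$ for the Kronecker algebra.
\end{itemize}

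The paper does not prove this statement; it quotes \cite[Theorem 2.35]{air_2014}. If you want an argument in the spirit of this paper, $g$-vectors supply the missing selection rule for $k$.

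\begin{itemize}
\item Since the $g$-vectors of $\mathcal M$ form a basis \cite{air_2014}, write $\mathbf g_{\mathcal N}=\sum_j c_j\mathbf g_{\mathcal M_j}$.
\item By Theorem~\ref{thm:FE-hom} and Corollary~\ref{cor:additive}, $F_N[-]$ is convex, nonnegative, and linear on the cone of $g$-vectors of any $\tau$-tilting pair. It is zero on the cone of $\mathcal M$ (as $N\in\Fac M$) and at $\mathbf g_{\mathcal N}$. Hence it vanishes on the segment from a generic interior point $\sum_j a_j\mathbf g_{\mathcal M_j}$ to $\mathbf g_{\mathcal N}$.
\item Some $c_j<0$; otherwise $F_M[\mathbf g_{\mathcal N}]=0$, i.e.\ $M\in\Fac N$. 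Let $k$ index the facet through which the segment leaves the cone of $\mathcal M$; then $c_k<0$.
\item The same linearity shows that distinct $\tau$-tilting cones have disjoint interiors. Hence $\mathbf g_{\mathcal M'_k}$ lies beyond that facet, and just past it the segment is interior to the cone of $\mathcal M'=\mu_k(\mathcal M)$. Thus $F_N$ vanishes on all $g$-vectors of $\mathcal M'$, and $N\in\Fac M'$ by Corollary~\ref{cor:dom-hom}.
\item $\mathcal M'$ is a left mutation. Otherwise let $C$ be the labeling brick of the left mutation $\mathcal M'\to\mathcal M$. Lemma~\ref{lem:g-MX} and Lemma~\ref{lem:multi-brick} give $\langle\mathbf g_{\mathcal M_j},\underline{\dim}C\rangle=0$ for $j\neq k$ and $>0$ for $j=k$. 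So $\langle\mathbf g_{\mathcal N},\underline{\dim}C\rangle<0$. But this number equals $\hom_A(C,\tau N)+\hom_A(Q,C)-\hom_A(N,C)\geq 0$, because $N\in\Fac M=\Fac(M/M_k)$ and $C\in(M/M_k)^{\bot}$.
\end{itemize}
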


\subsection{$g$-vectors and $F$-polynomials of decorated modules}
Let $M$ be a module in $\mod A$ and let 
\[\bigoplus_{i=1}^nP_i^{b_i}\rightarrow
\bigoplus_{i=1}^nP_i^{a_i}\rightarrow M\rightarrow 0
\]
be the minimal projective presentation of $M$ in $\mod A$. The
vector $$\delta_M \coloneqq   (a_1-b_1,\ldots,a_n-b_n)^T\in\mathbb Z^n$$ is called the {\em $\delta$-vector} of $M$ and the vector ${\bf g}_M \coloneqq   -\delta_M$ is called the {\em $g$-vector} of $M$.

    For a decorated $A$-module $\mathcal M=(M,P)$, we define its $\delta$-vector and $g$-vector as follows:
\[ 
    \delta_{\mathcal M} \coloneqq   \delta_M-\delta_P,\;\;\; {\bf g}_{\mathcal M} \coloneqq   -\delta_{\mathcal M}={\bf g}_{M}-{\bf g}_P.
\]

With this definition, we can see that the $g$-vector ${\bf g}_{(0,P_k)}$ of $(0,P_k)$ is the $k$-th column of $I_n$. This also corresponds to the $g$-vector of the $k$-th initial cluster variable $x_k$ in cluster algebras. 
\begin{remark}
    Note that the $\delta$-vectors defined here coincide with the $g$-vectors used in \cite{air_2014}. For the considerations on the cluster algebras side, the $g$-vectors defined here are the negative of the $\delta$-vectors. 
\end{remark}

\begin{definition}[$F$-polynomial and dual $F$-polynomial]
(i) The {\em $F$-polynomial} $F_M$ of a module $M\in\mod A$ is defined to be
\[ F_M=\sum_{{\bf v}\in\mathbb N^n}\chi(\Gr_{\bf v}(M)){\bf y}^{\bf v}\in\mathbb Z[y_1,\ldots,y_n],
\]
where $\Gr_{\bf v}(M)$ is the {\em quotient module Grassmannian} of $M$ with dimension vector ${\bf v}$ and $\chi$ is the Euler–Poincar\'{e} characteristic.

(ii) The {\em dual $F$-polynomial} $\check F_M$ of a module $M\in\mod A$ is defined to be
\[ \check F_M=\sum_{{\bf v}\in\mathbb N^n}\chi(\check\Gr_{\bf v}(M)){\bf y}^{\bf v}\in\mathbb Z[y_1,\ldots,y_n],
\]
where $\check \Gr_{\bf v}(M)$ is the  {\em submodule Grassmannian} of $M$ with dimension vector ${\bf v}$ and $\chi$ is the Euler–Poincar\'{e} characteristic.

(iii)  The {\em $F$-polynomial} of a decorated $A$-module $\mathcal M=(M,P)$ is defined by $F_{\mathcal M}:= F_M$.
\end{definition}

\begin{remark}\label{rmk:F-M-1}
Since the zero module is a quotient module of $M\in \mod A$,  the polynomial $F_M$ has constant term $1$. Similarly, the dual $F$-polynomial $\check{F}_M$ also has constant term $1$. Thus we have
\[ F_M[{\bf r}], \;\check F_M[{\bf r}]\in\mathbb Z_{\geq 0},\;\;\;\forall {\bf r}\in\mathbb Z^n,
\]
where $F_M[-],\check F_M[-]:\mathbb R^n\to \mathbb R$ are the tropical polynomials defined in Definition \ref{def:trop-polynomial}.
\end{remark}

For a negative decorated $A$-module $\mathcal M=(0,P)$, we clearly have 
$F_{\mathcal M}=1$.

\begin{proposition}[{\cite[Proposition 3.2]{DWZ10}}] \label{pro:DWZ-FmFn}
Let $M$ and $N$ be two modules in $\mod A$. Then \[F_{M\oplus N}=F_M\cdot F_N.\]
\end{proposition}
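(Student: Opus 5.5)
The identity $F_{M\oplus N}=F_M\cdot F_N$ should follow from a torus action on the quotient Grassmannian together with the fact that Euler characteristic is multiplicative on products. Fix $\mathbf v\in\mathbb N^n$. A quotient of $M\oplus N$ of dimension vector $\mathbf v$ is the same thing as a submodule $L\subseteq M\oplus N$ of dimension vector $\underline{\dim}(M\oplus N)-\mathbf v$. So $\Gr_{\mathbf v}(M\oplus N)$ is identified with a submodule Grassmannian, which is a projective variety.

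The first step is to let the one-dimensional torus $\mathbf k^\times$ act on $M\oplus N$ by module automorphisms $\lambda\cdot(m,n)=(m,\lambda n)$. This induces an algebraic action on $\Gr_{\mathbf v}(M\oplus N)$. Since Euler characteristic is well behaved for torus actions on complex varieties (or for the appropriate $\ell$-adic/constructible version over $\mathbf k$), we get
\[\chi(\Gr_{\mathbf v}(M\oplus N))=\chi(\Gr_{\mathbf v}(M\oplus N)^{\mathbf k^\times}),\]
because every non-fixed orbit is a copy of $\mathbf k^\times$, which has Euler characteristic $0$.

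The second step is to describe the fixed locus. A submodule $L$ is $\mathbf k^\times$-stable exactly when it is graded, i.e. $L=(L\cap M)\oplus(L\cap N)$. Indeed, stability under $\lambda$ for infinitely many $\lambda$ lets one separate the two components of each element by a Vandermonde argument. Consequently the fixed locus is the disjoint union, over $\mathbf v_1+\mathbf v_2=\mathbf v$, of the products $\Gr_{\mathbf v_1}(M)\times\Gr_{\mathbf v_2}(N)$. These pieces are open and closed, since the dimension vectors of the components vary discretely.

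The third step uses multiplicativity of $\chi$ on products and additivity on disjoint unions:
\[\chi(\Gr_{\mathbf v}(M\oplus N))=\sum_{\mathbf v_1+\mathbf v_2=\mathbf v}\chi(\Gr_{\mathbf v_1}(M))\,\chi(\Gr_{\mathbf v_2}(N)).\]
This is precisely the coefficient of $\mathbf y^{\mathbf v}$ in $F_M\cdot F_N$, which gives the identity.

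The main obstacle is the justification that $\chi$ of a variety with a $\mathbf k^\times$-action equals $\chi$ of its fixed points, especially when $\mathbf k$ is a general algebraically closed field. I would handle this by appealing to the standard Białynicki-Birula-type localization, or to compactly supported $\ell$-adic Euler characteristic together with a stratification into free $\mathbf k^\times$-orbit families. I would follow \cite{DWZ10} here, using whatever convention for $\chi$ they adopt, and I would not reprove this step.
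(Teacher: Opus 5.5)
Your proposal is correct. The paper gives no proof of this statement and simply cites \cite{DWZ10}, and your argument is the standard torus-localization proof used there: the $\mathbf k^\times$-action $\lambda\cdot(m,n)=(m,\lambda n)$ by module automorphisms, a fixed locus equal to $\bigsqcup_{\mathbf v_1+\mathbf v_2=\mathbf v}\Gr_{\mathbf v_1}(M)\times\Gr_{\mathbf v_2}(N)$, and the identity $\chi(X)=\chi(X^{\mathbf k^\times})$. The only nontrivial input is that localization identity, which you rightly flag and defer to the literature; its rigorous form uses a constructible stratification of the non-fixed locus into families of $\mathbf k^\times$-orbits (with $\ell$-adic compactly supported $\chi$ over a general algebraically closed field), not just the orbit-by-orbit heuristic.
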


\begin{proposition}
    [{\cite[Theorem 1.4]{fei_2019a}}] \label{lem:Fei-vertex}
  For each module $M\in\mod A$, we have $\mathsf P(F_M)=\mathsf P(M)$, where $\mathsf P(F_M)$ is 
  the Newton polytope of the $F$-polynomial $F_M$ and $\mathsf P(M)$ is the Newton polytope of module $M$.
\end{proposition}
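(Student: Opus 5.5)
The plan is to compare the two finite point sets directly. Write $S(M)\subseteq\mathbb N^n$ for the set of dimension vectors of quotient modules of $M$, so that $\mathsf P(M)$ is the convex hull of $S(M)$. Write $\supp F_M=\{{\bf v}\mid \chi(\Gr_{\bf v}(M))\neq 0\}$, so that $\mathsf P(F_M)$ is the convex hull of $\supp F_M$. I will prove two things: (a) $\supp F_M\subseteq S(M)$, and (b) every vertex of $\mathsf P(M)$ lies in $\supp F_M$. Together these give $\mathsf P(F_M)\subseteq \mathsf P(M)$ and $\mathsf P(M)\subseteq\mathsf P(F_M)$.

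Step (a) is immediate. If ${\bf v}\notin S(M)$, then $\Gr_{\bf v}(M)$ is empty, so its Euler characteristic is $0$.

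For step (b), fix a vertex ${\bf v}$ of $\mathsf P(M)$. Since $S(M)$ is finite, there is ${\bf r}\in\mathbb R^n$ such that ${\bf v}$ is the \emph{unique} maximizer of $\langle -,{\bf r}\rangle$ on $S(M)$. I claim that $\Gr_{\bf v}(M)$ is a single point, so that $\chi(\Gr_{\bf v}(M))=1\neq 0$. Identify quotients with kernels, and suppose $M/L_1$ and $M/L_2$ both have dimension vector ${\bf v}$. The exact sequence $0\to L_1\cap L_2\to L_1\oplus L_2\to L_1+L_2\to 0$ gives
\[
\underline{\dim}\,M/(L_1\cap L_2)+\underline{\dim}\,M/(L_1+L_2)=\underline{\dim}\,M/L_1+\underline{\dim}\,M/L_2=2{\bf v}.
\]
Both vectors on the left lie in $S(M)$, so each pairs with ${\bf r}$ to at most $\langle{\bf v},{\bf r}\rangle$. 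Since their sum pairs to exactly $2\langle{\bf v},{\bf r}\rangle$, both attain the maximum. By uniqueness of the maximizer, both dimension vectors equal ${\bf v}$. Hence $\dim L_1\cap L_2=\dim L_1$, so $L_1\cap L_2=L_1$, and symmetrically $L_1\cap L_2=L_2$, giving $L_1=L_2$. Finally, ${\bf v}\in S(M)$ means $\Gr_{\bf v}(M)$ is nonempty, so it is exactly one point.

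Combining the steps: $\mathsf P(F_M)\subseteq\mathsf P(M)$ follows from (a). Conversely, every vertex of $\mathsf P(M)$ lies in $\supp F_M$ by (b), so $\mathsf P(M)\subseteq\mathsf P(F_M)$. This proves equality.

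The only delicate point is step (b). It needs the submodule lattice argument together with a strictly generic direction ${\bf r}$ that isolates the vertex. That direction exists because ${\bf v}$ is an extreme point of the convex hull of a finite set.
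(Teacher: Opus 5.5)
Your proof is correct. The paper gives no argument for this statement: it quotes Fei \cite[Theorem 1.4]{fei_2019a} and uses it as a black box, so your proposal is an independent, self-contained route.

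The split you chose is the right one. Step (a) is trivial because empty Grassmannians have $\chi=0$. Step (b) only has to control the \emph{vertices} of $\mathsf P(M)$. This matters because non-vertex points of $S(M)$ can have $\chi(\Gr_{\bf v}(M))=0$, and you never need to say anything about them.

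Your treatment of (b) is the standard modular-lattice argument and is airtight. You take an exposing functional ${\bf r}$ and combine it with the identity $\underline{\dim}(L_1\cap L_2)+\underline{\dim}(L_1+L_2)=\underline{\dim}L_1+\underline{\dim}L_2$. This gives the slightly sharper fact that every vertex of $\mathsf P(M)$ occurs in $F_M$ with coefficient exactly $1$. It uses nothing beyond $\chi(\emptyset)=0$ and the Euler characteristic of a point being $1$, so any non-reduced structure on $\Gr_{\bf v}(M)$ is irrelevant.

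The citation buys the paper brevity. Your argument makes the paper's uses of this fact elementary. In the proof of Theorem \ref{thm:FE-hom}, two steps depend on it:
\begin{itemize}
\item The upper bound on $F_N[{\bf g}_{\mathcal M}]$, obtained by ranging over all quotients $N_0$, is exactly your step (a).
\item The lower bound $\langle {\bf g}_{\mathcal M},\underline{\dim}N_f\rangle\le F_N[{\bf g}_{\mathcal M}]$ is justified there only by ``$N_f$ is a quotient of $N$''. It is exactly your step (b), since a linear functional attains its maximum over $\mathsf P(N)$ at a vertex.
\end{itemize}
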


\begin{corollary}\label{cor:minkowski-sum}
    Let $M$ and $N$ be two modules in $\mod A$. Then $\mathsf P(M\oplus N)=\mathsf P(M)+\mathsf P(N)$.
\end{corollary}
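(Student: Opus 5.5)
The plan is to combine the two results just recalled: Fei's identification of Newton polytopes (Proposition \ref{lem:Fei-vertex}), the multiplicativity of $F$-polynomials under direct sums (Proposition \ref{pro:DWZ-FmFn}), and the behaviour of Newton polytopes under products (Proposition \ref{pro:GKZ}). Everything is reduced to the corresponding polynomial statement. No torsion-theoretic input is needed.

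First, I would apply Proposition \ref{lem:Fei-vertex} to the module $M\oplus N$, which gives $\mathsf P(M\oplus N)=\mathsf P(F_{M\oplus N})$. Next, by Proposition \ref{pro:DWZ-FmFn} we have $F_{M\oplus N}=F_M\cdot F_N$, and hence $\mathsf P(M\oplus N)=\mathsf P(F_MF_N)$.

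To invoke Proposition \ref{pro:GKZ}, the polynomials $F_M$ and $F_N$ must be non-zero. By Remark \ref{rmk:F-M-1}, both have constant term $1$, since the zero quotient always occurs and its Grassmannian is a point. So Proposition \ref{pro:GKZ} applies and yields $\mathsf P(F_MF_N)=\mathsf P(F_M)+\mathsf P(F_N)$. Applying Proposition \ref{lem:Fei-vertex} once more to $M$ and $N$ separately converts the right-hand side into $\mathsf P(M)+\mathsf P(N)$, which is the claimed identity.

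I do not expect a real obstacle, because all the substantive work is contained in Fei's theorem. The only point needing care is the non-vanishing hypothesis in Proposition \ref{pro:GKZ}, which is handled by the constant-term observation above. Proposition \ref{pro:GKZ} is stated for integer coefficients, and $F_M,F_N\in\mathbb Z[y_1,\ldots,y_n]$, so it applies directly.
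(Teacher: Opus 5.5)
Your proposal is correct and matches the paper's proof: use $F_{M\oplus N}=F_MF_N$ (Proposition \ref{pro:DWZ-FmFn}) and Proposition \ref{pro:GKZ}, then pass between module and polynomial Newton polytopes via Fei's theorem (Proposition \ref{lem:Fei-vertex}). Your explicit check that $F_M,F_N\neq 0$, because each has constant term $1$, is a small detail the paper leaves implicit.
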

\begin{proof}
By Propositon \ref{pro:DWZ-FmFn} and Propositon \ref{pro:GKZ}, we know that \[\mathsf P(F_{M\oplus N})=\mathsf P(F_{M}F_{N})=\mathsf P(F_M)+\mathsf P(F_N).\] Then the desired result follows from Proposition \ref{lem:Fei-vertex}.
\end{proof}

\begin{example}
    Let $A$ be the path algebra of the quiver $1\rightarrow 2$. We have the exact sequence 
    \[0\rightarrow P_2\rightarrow P_1\rightarrow S_1\rightarrow 0\] in $\mod A=\add (P_1\oplus P_2\oplus S_1)$. There are $5$ basic  $\tau$-tilting pairs in $\mod A$ given as follows:
\[
\xymatrix{
&(0,P_1\oplus P_2)\ar[ld]_{\mu_{(0,P_1)}}\ar[rd]^{\mu_{(0,P_2)}}&\\
(S_1,P_2) \ar[d]_{\mu_{(0,P_2)}}&&(P_2,P_1) \ar[d]^{\mu_{(0,P_1)}}\\
(S_1\oplus P_1,0)\ar[rr]^{\mu_{(S_1,0)}}&&\mathcal (P_1\oplus P_2,0)}
\]
It is easy to check that 
\begin{gather*}
F_{S_1}=1+y_1,\;\;F_{P_1}=1+y_1+y_1y_2,\;\;F_{P_2}=1+y_2,\nonumber\\
   {\bf g}_{S_1}=\begin{bmatrix}
       -1\\ 1
   \end{bmatrix},\;\;
   {\bf g}_{P_1}=\begin{bmatrix}
       -1\\ 0
   \end{bmatrix},\;\;
   {\bf g}_{P_2}=\begin{bmatrix}
       0\\ -1
   \end{bmatrix}.
\end{gather*}
From the viewpoint of categorification of cluster algebras, the indecomposable $\tau$-rigid modules $S_1, P_1, P_2$ correspond to the non-initial cluster variables  $$x_3={x_1^{-1}x_2\cdot(1+\widehat y_1),}\;\;\;  x_4={x_1^{-1}\cdot (1+\widehat y_1+\widehat y_1\widehat y_2),}\;\;\;
    x_5={x_2^{-1}\cdot (1+\widehat y_2)}$$ 
    in Example \ref{ex:A2}.
\end{example}

\subsection{Bongartz completion in $\tau$-tilting theory}
 A \emph{torsion pair} $(\mathcal{T},\mathcal{F})$ in  $\mod A$ is a pair of subcategories of $\mod A$ satisfying that
\begin{itemize}
\item[(i)] $\Hom_{A}(T,F)=0$ for any $T\in \mathcal{T}$ and $F\in \mathcal{F}$;
\item[(ii)] for any object $X\in \mod A$, there exists a short exact sequence
 $$0\rightarrow X_t\rightarrow X\rightarrow X_f\rightarrow0$$ with $X_t\in\mathcal{T}$ and $X_f\in \mathcal{F}$. Thanks to the condition (i), such a sequence is unique up to isomorphisms. This short exact sequence is called the \emph{canonical sequence} of $X$ with respect to $(\mathcal{T},\mathcal{F})$.
\end{itemize}

Notice that in a torsion pair  $(\mathcal T,\mathcal F)$, we always have $\mathcal F=\mathcal T^\bot$ and $\mathcal T= \prescript{\bot}{}{\mathcal F}$.  The subcategory $\mathcal T$ (resp., $\mathcal F$) in a torsion pair $(\mathcal T,\mathcal F)$ is called a \emph{torsion class} (resp., \emph{torsion-free class}) in $\mod A$. 

It is known that a subcategory of $\mod A$ is a torsion class if and only if it is closed under extensions and quotients.
A torsion class $\mathcal T$ is said to be {\em functorially finite}, if there exists a module $M\in\mod A$ such that $\mathcal T=\Fac M$.

A module $M$ in a subcategory $\mathcal C$ of $\mod A$ is said to be {\em Ext-projective} in $\mathcal C$, if 
$\Ext_A^1(M, X)=0$ for any $X\in\mathcal C$. We denote by $\mathcal P(\mathcal C)$ the direct sum of one copy of each of the indecomposable Ext-projective objects in $\mathcal C$ up to isomorphism.

\begin{theorem}[{\cite[Proposition 1.2, Theorem 2.7]{air_2014}}] \label{thm:air-torsion}
The following statements hold.
\begin{itemize}
    \item [(i)] There is a map $\Psi$ 
from $\tau$-rigid pairs to functorially finite torsion classes in $\mod A$
given by $(M,P)\mapsto \Fac M$.

\item[(ii)] The above map $\Psi$ is a bijection if we restrict it to basic $\tau$-tilting pairs.

\item[(iii)] Let $(M,P)$ be a basic $\tau$-tilting pair. Then $M=\mathcal P(\Fac M)$.
\end{itemize}
\end{theorem}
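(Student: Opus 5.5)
The plan is to prove the three parts in the order (i), (iii), (ii). Part (ii) follows formally once we know that a basic $\tau$-tilting pair can be recovered from its torsion class, and that every functorially finite torsion class is of the form $\Fac M$.

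For (i), let $(M,P)$ be a $\tau$-rigid pair. The first step is to show that $\Ext^1_A(M,X)=0$ for every $X\in\Fac M$. By the Auslander--Reiten formula, $\Ext^1_A(M,X)\cong D\overline{\Hom}_A(X,\tau M)$, where $\overline{\Hom}$ denotes morphisms modulo those factoring through injectives. So it suffices to show $\Hom_A(X,\tau M)=0$. Any $X\in\Fac M$ is a quotient $M^a\twoheadrightarrow X$, so a map $X\to\tau M$ composes to a map $M^a\to\tau M$, which vanishes by $\tau$-rigidity; hence the map itself is zero. Thus $\Fac M\subseteq \prescript{\bot}{}{(\tau M)}$ and $\Ext^1_A(M,\Fac M)=0$.

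Next I apply the standard Auslander--Smal{\o} argument. Let $0\to X\to Y\to Z\to 0$ be exact with $X,Z\in\Fac M$. Choose an epimorphism $M^b\to Z$. Since $\Ext^1_A(M,X)=0$, it lifts to a map $M^b\to Y$. Combining this with an epimorphism $M^a\to X$ gives a surjection $M^{a+b}\to Y$, so $Y\in\Fac M$. Hence $\Fac M$ is closed under extensions; it is clearly closed under quotients, so it is a torsion class. It is functorially finite by the paper's definition, since it equals $\Fac M$. This gives the map $\Psi$.

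For (iii), write $\mathcal T=\Fac M$ with $(M,P)$ basic $\tau$-tilting. By (i), every summand of $M$ is Ext-projective in $\mathcal T$, so $M\in\add\mathcal P(\mathcal T)$. For the converse I use a Bongartz-type construction. Take a minimal left $\add M$-approximation $f\colon A\to M_0$. Its image is the torsion part $tA$ of $A$ with respect to $\mathcal T$, and its cokernel $M_1$ lies in $\Fac M$. The subsequence $tA\to M_0\to M_1\to 0$ shows the following: if $N$ is Ext-projective in $\mathcal T$, then the surjection $A^c\to N$ factors through $tA$, hence through $M_0$ up to a splitting, so $N\in\add M$.

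An alternative, cleaner route is by counting. Show that the indecomposable Ext-projectives of $\mathcal T$, together with the indecomposable projectives $P_i$ with $\Hom_A(P_i,\mathcal T)=0$, form a $\tau$-rigid pair with exactly $n$ summands. This pair contains $(M,P)$, which already has $|M|+|P|=n$ summands, so the two coincide and $M=\mathcal P(\mathcal T)$. Note that $\Hom_A(P,\mathcal T)=0$ because $\Hom_A(P,M)=0$.

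For (ii), injectivity follows from (iii) together with the fact, quoted from \cite[Proposition 2.3]{air_2014}, that $P$ is determined by $M$. For surjectivity, let $\mathcal T$ be functorially finite. I would take $N=\mathcal P(\mathcal T)$ and let $Q$ be the sum of the $P_i$ with $\Hom_A(P_i,\mathcal T)=0$. I then show three things: $N$ is $\tau$-rigid, via the Auslander--Reiten formula applied to $\Ext^1_A(N,\Fac N)=0$; $\Fac N=\mathcal T$, using a left approximation $A\to T_0$ with $T_0$ in $\mathcal T$ together with Ext-projectivity; and $|N|+|Q|=n$.

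The main obstacle is this last count. My plan is to build the Bongartz-type sequence $A\xrightarrow{f}T_0\to T_1\to0$, with $f$ a minimal left $\mathcal T$-approximation, and to show that $T_0\oplus T_1$ lies in $\add N$. I would then count summands by comparing $g$-vectors: the $g$-vectors of the summands of $N$ and $Q$ should span $\mathbb Z^n$, because $\mathbf g$ of $A/tA$-related terms is expressed through these summands. Since the $g$-vectors of a $\tau$-rigid pair are linearly independent, this forces exactly $n$ summands.
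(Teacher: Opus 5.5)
\textbf{There is a genuine gap, and it is the summand count, which is the real content of (ii) and (iii).} Your part (i) is correct, and so is deducing injectivity in (ii) from (iii). The paper only cites \cite{air_2014} here, so I compare with AIR's argument below.

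\emph{Your direct route to $\mathcal P(\Fac M)\in\add M$ fails.}
\begin{itemize}
\item The image of a left $\Fac M$-approximation $f\colon A\to M_0$ is $A/\operatorname{ann}(\Fac M)$. This is a quotient of $A$, not the torsion submodule $tA$. For the path algebra of $1\to 2$ and $(M,P)=(S_1,P_2)$, the map $f\colon A\to S_1$ has image $S_1$, while $tA=0$.
\item More seriously, factoring $A^c\to N$ through $f$ only gives a surjection $M_0^c\to N$. To conclude $N\in\add M$ this surjection must split. Ext-projectivity of $N$ gives a splitting only if the kernel lies in $\Fac M$, and you never show that. In classical tilting theory this is exactly where $\Fac T=T^{\perp_1}$ is used.
\end{itemize}

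\emph{The counting route rests on two unproved claims.}
\begin{itemize}
\item Spanning: $g$-vectors are not additive on short exact sequences, so the appeal to ``$A/tA$-related terms'' proves nothing. The true statement is in $K^b(\operatorname{proj}A)$: the cone of the lift $A\to\mathbf P_{T_0}$ of $f$ is $\mathbf P_{T_1}\oplus Q''[1]$ for some projective $Q''$. You would still have to show $\Hom_A(Q'',\mathcal T)=0$.
\item Independence: linear independence of the $g$-vectors of a $\tau$-rigid pair is \cite[Theorem 5.1]{air_2014}. AIR derive it from Bongartz completion, which rests on the very bijection you are proving. Citing it is therefore circular, and you give no independent argument.
\item A smaller point: $\Ext^1_A(N,\Fac N)=0\Rightarrow\Hom_A(N,\tau N)=0$ needs more than the AR formula, which only kills $\overline{\Hom}$. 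One also uses that the inclusion into $\tau N$ of the image of a nonzero map $N\to\tau N$ cannot factor through an injective, since $\tau N$ has no injective summands.
\end{itemize}

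\emph{The missing idea is the bound $|X|+|R|\le n$ for every $\tau$-rigid pair $(X,R)$.} AIR, following Auslander--Smal{\o} and Smal{\o}, obtain it by reducing to classical tilting theory.
\begin{itemize}
\item Let $\bar A=A/\operatorname{ann}X$. Over $\bar A$, $X$ stays $\tau$-rigid, since $\tau_{\bar A}X\subseteq\tau_AX$.
\item $X$ is faithful over $\bar A$, so $D\bar A\in\Fac X$. Hence $\operatorname{pd}_{\bar A}X\le 1$ and $X$ is a partial tilting $\bar A$-module.
\item Bongartz's lemma gives $|X|\le|\bar A|$. Here $|\bar A|$ is the number of $i$ with $\Hom_A(P_i,X)\neq 0$, which is at most $n-|R|$.
\end{itemize}
With this bound your alternative argument for (iii) closes at once: $(\mathcal P(\Fac M),P)$ is $\tau$-rigid and contains $(M,P)$, which already has $n$ summands.

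For surjectivity in (ii), apply the same reduction with $\bar A=A/\operatorname{ann}\mathcal T$. Your approximation sequence becomes $0\to\bar A\to T_0\to T_1\to 0$, because the image of $f$ is exactly $\bar A$. So $T_0\oplus T_1$ is a tilting $\bar A$-module with $|\bar A|=n-|Q|$ summands. This is the required lower bound, and the upper bound above gives equality.
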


\begin{proposition}[\cite{air_2014}*{Proposition 2.9, Theorem 2.10}]\label{pro-minmax}
 Let $(U,Q)$ be a basic $\tau$-rigid pair in $\mod A$. Then
 \begin{itemize}
 \item[(i)]  $\Fac U$ and  $\prescript{\bot}{}({\tau U})\cap Q^\bot$ are functorially finite torsion classes in $\mod A$.
 \item[(ii)]  $(U,Q)$ is a direct summand of a $\tau$-tilting pair $(M,P)$ if and only if 
 \[\Fac U\subseteq \Fac M\subseteq \prescript{\bot}{}({\tau U})\cap Q^\bot.\]
 \end{itemize}
\end{proposition}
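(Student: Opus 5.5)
For part (i), I would show that each of the two subcategories is closed under quotients and extensions, and then check functorial finiteness separately.

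The class $\prescript{\bot}{}{(\tau U)}\cap Q^\bot$ is closed under quotients and extensions because $\Hom_A(-,\tau U)$ and $\Hom_A(Q,-)$ are left exact, and $Q$ is projective. For $\Fac U$, closure under quotients is clear. For extensions, the Auslander--Reiten formula gives $\Ext^1_A(U,X)\cong D\overline{\Hom}_A(X,\tau U)$. Since $\Hom_A(U,\tau U)=0$, we get $\Hom_A(X,\tau U)=0$ for every $X\in\Fac U$, hence $\Ext^1_A(U,\Fac U)=0$. Now take a short exact sequence $0\to Y\to E\to X\to 0$ with $X,Y\in\Fac U$. A surjection $U^a\to X$ lifts to $E$ because $\Ext^1_A(U,Y)=0$. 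Together with a surjection $U^b\to Y$, this gives a surjection $U^{a+b}\to E$, so $E\in\Fac U$. Finally, $\Fac U$ is functorially finite by the definition used in the paper.

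The expected main obstacle is functorial finiteness of $\prescript{\bot}{}{(\tau U)}\cap Q^\bot$. My plan is to pass to $B=A/\langle e\rangle$, where $e$ is the idempotent with $Q\cong Ae$, so that $Q^\bot=\mod B$. I would then use the AIR comparison $\prescript{\bot}{}{(\tau_A U)}\cap \mod B=\prescript{\bot}{}{(\tau_B U)}$, which holds because $U\in\mod B$ and $\tau_B U$ is a submodule-type truncation of $\tau_A U$. By Auslander--Smal\o, $\prescript{\bot}{}{(\tau_B U)}$ is functorially finite in $\mod B$, hence in $\mod A$. Equivalently, Bongartz's construction inside $\mod B$ produces a module $T$ with $\Fac T$ equal to this class.

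For part (ii), the forward direction is short. If $(U,Q)$ is a direct summand of $(M,P)$, then $\Fac U\subseteq\Fac M$. Moreover, Theorem \ref{thm-air-mutation}(i) gives $\Fac M=\prescript{\bot}{}{(\tau M)}\cap P^\bot$, and this is contained in $\prescript{\bot}{}{(\tau U)}\cap Q^\bot$ since $U\in\add M$ and $Q\in\add P$.

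For the converse, set $\mathcal T=\Fac M$. Since $\mathcal T\subseteq\prescript{\bot}{}{(\tau U)}$, the Auslander--Reiten formula gives $\Ext^1_A(U,\mathcal T)=0$. Since also $U\in\Fac U\subseteq\mathcal T$, the module $U$ is Ext-projective in $\mathcal T$. Theorem \ref{thm:air-torsion}(iii) then gives $U\in\add M$.

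It remains to show $Q\in\add P$. Because $M\in\mathcal T\subseteq Q^\bot$, we have $\Hom_A(Q,M)=0$. I would use the fact from \cite[Proposition 2.3]{air_2014} that, for a basic $\tau$-tilting pair $(M,P)$, the projective $P$ is the maximal projective with $\Hom_A(P,M)=0$. This fact is the second point needing care. Given it, $Q\in\add P$, and so $(U,Q)$ is a direct summand of $(M,P)$.
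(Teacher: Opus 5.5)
The paper does not prove this statement itself but quotes it from \cite{air_2014}. Your argument is correct and is essentially the standard AIR proof. For (i), you reduce to $B=A/\langle e\rangle$ via $\prescript{\bot}{}{(\tau_A U)}\cap\mod B=\prescript{\bot}{}{(\tau_B U)}$ and then apply Auslander--Smal\o{}; for (ii), you use Ext-projectivity of $U$ in $\Fac M$ together with $M=\mathcal P(\Fac M)$ and the maximality of $P$.

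Three points are worth recording:
\begin{itemize}
\item[(a)] The comparison identity genuinely needs the ideal to be generated by an idempotent. In that case $\tau_B U$ is exactly the largest $B$-submodule of $\tau_A U$, whereas for a general ideal only one inclusion holds.
\item[(b)] Auslander--Smal\o{} must be applied to a $\tau_B$-rigid module. Your identity supplies this, since $U\in\prescript{\bot}{}{(\tau_A U)}\cap\mod B$.
\item[(c)] The maximality of $P$ follows at once from the bound $|M|+|P|\le|A|$ for $\tau$-rigid pairs. An indecomposable projective $P'\notin\add P$ with $\Hom_A(P',M)=0$ would make $(M,P\oplus P')$ a $\tau$-rigid pair with $|A|+1$ summands.
\end{itemize}
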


\begin{definition}[Left and right Bongartz completion]\label{def:comp}
Let $(U,Q)$ be a basic $\tau$-rigid pair in $\mod A$ and  $\Psi$ the bijection in Theorem \ref{thm:air-torsion} (ii).
\begin{itemize}
\item[(i)] The {\em left Bongartz completion} (or {\em Bongartz co-completion}) of $(U,Q)$ is defined to be the basic $\tau$-tilting pair $(M^-,P^-)$ such that $\Fac M^-=\Fac U$.

 \item[(ii)] The {\em right Bongartz completion}  (or {\em Bongartz completion}) of $(U,Q)$ is defined to be the basic $\tau$-tilting pair 
 $(M^+,P^+)$ such that $\Fac M^+=\prescript{\bot}{}({\tau U})\cap Q^\bot$.
\end{itemize}
\end{definition}
\begin{remark}\label{rmk:left-completion}
 It is easy to see that the left Bongartz completion $(M^-,P^-)$ of $(U,Q)$
 can be characterized by the following two conditions:
 \begin{itemize}
     \item [(a)] $(U,Q)$ is a direct summand of $(M^-,P^-)$;
     \item[(b)] $\Fac M^-\subseteq \Fac M$ for any basic $\tau$-tilting pair $(M,P)$ such that $(U,Q)$ is a direct summand of $(M,P)$.
 \end{itemize}
 The right Bongart completion $(M^+,P^+)$ of $(U,Q)$ can be characterized in a similar way.
\end{remark}

\section{$F$-invariant and Newton polytopes in cluster algebras}

\subsection{$F$-invariant of cluster monomials}\label{sec:F-cluster}

$F$-invariant in cluster algebras is introduced by the author in \cite{Cao-2023}, which generalizes the $E$-invariant \cite{DWZ10} in additive categorification of cluster algebras and the $\mathfrak{d}$-invariant \cite{kkko-2018,kkop-2020} in monoidal categorification of cluster algebras. 

Recall that for a nonzero polynomial
$F({\bf y})=\sum_{{\bf v}\in\mathbb N^n}c_{\bf v}{\bf y}^{\bf v}\in\mathbb Z[y_1,\ldots,y_n]$, its tropical polynomial $F[-]:\mathbb R^n\to \mathbb R$ is defined by
\[F[{\bf r}]:=\max\{\langle{{\bf v},{\bf r}}\rangle\mid c_{\bf v}\neq 0\}.\]

\begin{definition}[$F$-invariant and partial $F$-invariant] \label{def:F-inv}
Let $\mathcal A$ be a cluster algebra and $D=diag(d_1,\ldots,d_n)$ a fixed skew-symmetrizer for the exchange matrices of $\mathcal A$.
Let $u$ and $v$ be two cluster monomials of $\mathcal A$, and let 
\[
 u={\bf x}_t^{{\bf g}_u^t}F_u^t(\hat y_{1;t},\ldots,\hat y_{n;t}) \;\;\;\;\text{and}\;\;\; v={\bf x}_t^{{\bf g}_{v}^t}F_{v}^t(\hat y_{1;t},\ldots,\hat y_{n;t})
\]
be the canonical expressions of $u$ and $v$ with repect to a vertex $t\in \mathbb T_n$.  
\begin{itemize}
    \item [(i)] 
The integer 
\begin{eqnarray}\label{eqn:def-F-inv}
    (u\mid\mid v)_F=F_u^t[D{\bf g}_{v}^t]+F_{v}^t[D{\bf g}_{u}^t]\nonumber
\end{eqnarray}
is called the {\em $F$-invariant} of  $(u,v)$, which is independent of the choice of $t\in\mathbb T_n$ by Theorem \ref{thm:F-inv} below.
\item[(ii)]  The integer $F_u^t[D{\bf g}_{v}^t]$ is called the {\em partial $F$-invariant} of $(u,v)$ at vertex $t\in\mathbb T_n$.
\end{itemize}
\end{definition}
Since the definition of $F$-invariant depends on the choice of a skew-symmetrizer $D$, {\em we always fix a skew-symmetrizer for the exchange matrices of $\mathcal A$ in this paper.}

\begin{theorem}[{\cite[Theorem 4.10, Proposition 4.30]{Cao-2023}}]
\label{thm:F-inv}
    Let $u$ and $v$ be two cluster monomials of $\mathcal A$ and $D=diag(d_1,\ldots,d_n)$ a fixed skew-symmetrizer for the exchange matrices of $\mathcal A$. Then 
        for any two vertices $t,t'\in\mathbb T_n$, we have 
\begin{eqnarray}\label{eqn:F-inv-mut}
            F_u^t[D{\bf g}_{v}^t]+F_{v}^t[D{\bf g}_u^t]=F_u^{t'}[D{\bf g}_{v}^{t'}]+F_{v}^{t'}[D{\bf g}_u^{t'}].\nonumber
        \end{eqnarray}
        In particular, the $F$-invariant $(u\mid\mid v)_F=F_u^t[D{\bf g}_{v}^t]+F_{v}^t[D{\bf g}_u^t]$ only depends on $u$ and $v$, not on the choice of vertex $t\in\mathbb T_n$.
\end{theorem}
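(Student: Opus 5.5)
The plan is to prove the invariance under a single mutation and then propagate it along paths. Since $\mathbb T_n$ is connected, it suffices to treat adjacent vertices $t' = \mu_k(t)$, that is, the two vertices joined by an edge labelled $k$. So the first step is to fix such a pair and express ${\bf g}^{t'}$ and $F^{t'}$ of a cluster monomial in terms of ${\bf g}^{t}$ and $F^{t}$. For this I will use the Fomin--Zelevinsky mutation rules for $g$-vectors and $F$-polynomials under a change of initial seed, \cite[Conjecture 6.10, Proposition 6.8]{fomin_zelevinsky_2007}, which hold in the skew-symmetrizable case by \cite{GHKK18}. Write ${\bf g}={\bf g}_u^t$ and $F=F_u^t$. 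Let $F|_{y_k=0}$ denote $F$ with $y_k$ set to zero, and let $F$ at $y_k=\infty$ denote the top coefficient of $F$ in $y_k$.

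By sign-coherence, the coefficient ${\bf g}_k$ is nonnegative exactly when the leading term of $F$ in $y_k$ is compatible with the mutation in the standard way. The two rules then read
\[
{\bf g}' = \mu\text{-piecewise linear image of } {\bf g}, \qquad
F'({\bf y}') = (1+y_k)^{h}\, F(\text{birational substitution}),
\]
where the exponent $h$ is the $k$-th coordinate of the $h$-vector. Concretely, $h=[-g_k]_+$ or $[g_k]_+$ depending on the sign conventions attached to $B_t$.

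The second step is to tropicalize. Corollary \ref{cor-F-prod} turns the products into sums, and the substitution $y_j\mapsto y_j y_k^{[b_{kj}]_+}(1+y_k)^{-b_{kj}}$ tropicalizes to a piecewise-linear change of variable. The expected outcome is the formula
\[
F_u^{t'}[D{\bf g}_v^{t'}] = F_u^t[D{\bf g}_v^t] + \text{(correction)},
\]
where the correction is an expression like $d_k\big([g_{u,k}]_+[-g_{v,k}]_+ - [\,\cdot\,]\big)$ that depends only on the $k$-th coordinates $g_{u,k}$, $g_{v,k}$ of the two $g$-vectors. To derive it, I will evaluate the tropicalization of each piece at $D{\bf g}_v^{t'}$. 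The skew-symmetrizer enters because $D B$ is skew-symmetric, so the terms $\langle {\bf e}_j, D\,(\text{column } b_{\cdot k})\rangle$ pair up as $d_j b_{jk} = -d_k b_{kj}$.

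The third step is to add the formula for $(u,v)$ to the symmetric formula for $(v,u)$ and check that the two corrections cancel. I expect the total correction to be
\[
d_k\big([g_{u,k}]_+[-g_{v,k}]_+ + [-g_{u,k}]_+[g_{v,k}]_+\big) - (\text{same terms after mutation}),
\]
and the $k$-th coordinate only changes sign under mutation, $g'_k=-g_k$. Under that sign change the expression becomes antisymmetric in the two swapped pairs, so the correction for the sum vanishes. Note that the individual partial $F$-invariants are not invariant; only their sum is.

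The main obstacle is the second step. The tropical evaluation of the substituted polynomial $F(\dots)$ at $D{\bf g}'$ is not simply $F[\text{linear image}]$, because the factor $(1+y_k)$ tropicalizes to $\max(0,r_k)$, which introduces case splits. To handle this, I would exploit sign-coherence of $c$-vectors, Theorem \ref{thm:GHKK}(iii). It tells me on which side of $r_k=0$ the relevant vertices of $\mathsf P(F)$ lie, and I expect it to identify the correct leading monomial in $y_k$ (degree equal to the $f$-vector entry). If that approach gets messy, the fallback is to cite \cite{Cao-2023} directly, since the statement is exactly Theorem 4.10 there. Within this paper, then, the theorem is imported and this plan only sketches its proof.
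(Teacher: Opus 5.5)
The paper gives no argument for this statement; it imports it from \cite{Cao-2023}, as your fallback does. Your strategy is sound: reduce to one edge $t'=\mu_k(t)$, tropicalize the Fomin--Zelevinsky change-of-initial-seed formulas, and note that only the symmetric sum is invariant. The correction you predict is also the right one. The gap is in the step you flag yourself. As written, the crux is only ``expected'', and the tool you propose for it does not address the real issue. That tool is sign-coherence of $c$-vectors, used to locate vertices of $\mathsf P(F)$ relative to $r_k=0$. But the sign of $r_k=d_kg_{v,k}$ is fixed by $v$, and the case split on it is harmless. What must be excluded is cancellation after the birational substitution, and the input for that is positivity, Theorem \ref{thm:GHKK}(iv).

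Your displayed rule is also not precise enough to produce the correction. Let $g_k$ be the $k$-th coordinate of ${\bf g}_u^t$ and $b_{kj}=b^t_{kj}$. The correct relation is
\[
(y_k+1)^{\min(g_k,0)}F_u^t({\bf y})=(y_k'+1)^{\min(-g_k,0)}F_u^{t'}({\bf y}'),\qquad y_k'=y_k^{-1},\quad y_j'=y_jy_k^{[b_{kj}]_+}(y_k+1)^{-b_{kj}}.
\]
The one-sided form $(1+y_k)^hF(\cdots)$ loses the monomial hidden in $(1+y_k^{-1})^{[g_k]_+}$. That monomial is exactly where the term $[g_k]_+[-r_k]_+$ below comes from. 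Passing from cluster variables to cluster monomials, here and in the $g$-vector rule, needs row sign-coherence of $G$-matrices, Theorem \ref{thm:GHKK}(ii). Only then are the exponents $\min(g_k,0)$ and the piecewise-linear $g$-rule additive on a cluster.

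With these inputs the step closes. Both sides are subtraction-free, and max-plus evaluation at ${\bf r}$ is a semifield homomorphism, so
\[
F_u^{t'}[{\bf r}']=F_u^{t}[{\bf r}]-[-g_k]_+[r_k]_++[g_k]_+[-r_k]_+,\qquad r_k'=-r_k,\quad r_j'=r_j+[b_{kj}]_+r_k-b_{kj}[r_k]_+ .
\]
Now multiply the $g$-vector rule $g_j'=g_j+[b_{jk}]_+g_k-b_{jk}\min(g_k,0)$ by $d_j$ and use $d_jb_{jk}=-d_kb_{kj}$. This shows that ${\bf r}=D{\bf g}_v^t$ is sent to ${\bf r}'=D{\bf g}_v^{t'}$. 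Therefore
\[
F_u^{t'}[D{\bf g}_v^{t'}]=F_u^{t}[D{\bf g}_v^{t}]+d_k\bigl([g_{u,k}]_+[-g_{v,k}]_+-[-g_{u,k}]_+[g_{v,k}]_+\bigr).
\]
The correction is antisymmetric in $(u,v)$, so it cancels in $(u\mid\mid v)_F$. With these three inputs made explicit, your outline becomes a complete proof.
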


Since the $F$-polynomials of cluster monomials have constant term $1$, we have \[(u\mid\mid v)_F=F_u^t[D{\bf g}_{v}^t]+F_{v}^t[D{\bf g}_{u}^t]\geq 0.\]
\begin{theorem}[{\cite[Theorem 4.19]{Cao-2023}}]
\label{thm:F-compatible}
     Let $u$ and $v$ be two cluster monomials of $\mathcal A$. Then the product $u\cdot v$ remains a cluster monomial of $\mathcal A$ if and only if $(u\mid\mid v)_F=0$.
\end{theorem}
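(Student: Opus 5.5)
The plan is to prove the two implications separately, working at a vertex adapted to $u$ and using the mutation invariance of Theorem \ref{thm:F-inv}.

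\emph{($\Rightarrow$).} Suppose $uv$ is a cluster monomial. Then $u$ and $v$ are monomials in the variables of one cluster $[{\bf x}_t]$. With respect to $t$, every cluster variable $x_{i;t}$ has $F$-polynomial $1$, so $F_u^t=F_v^t=1$. Both partial $F$-invariants at $t$ therefore vanish, and Theorem \ref{thm:F-inv} gives $(u\mid\mid v)_F=0$.

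\emph{($\Leftarrow$), reduction.} Suppose $(u\mid\mid v)_F=0$. Write $u={\bf x}_t^{\bf h}$ with ${\bf h}\in\mathbb N^n$, so that $F_u^t=1$ and ${\bf g}_u^t={\bf h}$. By Theorem \ref{thm:F-inv} the hypothesis becomes
\[F_v^t[D{\bf h}]=0.\]
By Theorem \ref{thm:GHKK}(iv), $F_v^t$ has non-negative coefficients and constant term $1$. Since $D{\bf h}\geq 0$, the vanishing of the tropical value means that every exponent vector of $F_v^t$ has zero $k$-th coordinate whenever $h_k>0$. In other words, $F_v^t$ does not involve $y_k$ for any $k$ in the support of ${\bf h}$.

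For each such $k$, I then compute $(x_{k;t}\mid\mid v)_F$ at the vertex $t$. This gives $d_k\cdot\deg_{y_k}F_v^t=0$. So the problem reduces to two claims:
\begin{itemize}
\item[(a)] if a cluster variable $x$ satisfies $(x\mid\mid v)_F=0$ for a cluster monomial $v$, then $x$ is compatible with every cluster variable in $\supp(v)$;
\item[(b)] a set of pairwise compatible cluster variables lies in a common cluster.
\end{itemize}
Once (a) and (b) hold, $\{x_{k;t}\mid h_k>0\}\cup\supp(v)$ lies in one cluster, and hence $uv$ is a cluster monomial. Claim (b) is a known result for skew-symmetrizable cluster algebras, and I would invoke it directly.

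\emph{Main obstacle: claim (a).} I would first reduce to $v$ a single cluster variable $z$. Corollary \ref{cor-F-prod} gives $F_v^t=\prod F_z^{t\,a_z}$, so the vanishing of $\deg_{y_k}F_v^t$ passes to each factor. It then remains to show that if $F_z^t$ does not involve $y_k$, then $z$ and $x_{k;t}$ are compatible. My first attempt would be to mutate at $k$. Using the separation formula and the mutation rule for $F$-polynomials and $g$-vectors, an $F_z^t$ free of $y_k$ should make the $g$-vector ${\bf g}_z^t$ and $F_z^t$ transform as if $x_{k;t}$ were a frozen direction. I would then argue, using the sign-coherence statements in Theorem \ref{thm:GHKK}, that ${\bf g}_z^t$ lies in a cone of the $g$-vector fan that has ${\bf e}_k$ as a ray. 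If this direct argument does not go through, I would instead identify $\deg_{y_k}F_z^t$ with the compatibility degree of $x_{k;t}$ and $z$, as in the known $F$-polynomial / $d$-vector comparison results for skew-symmetrizable cluster algebras. Vanishing of the compatibility degree is exactly compatibility, which would give (a).
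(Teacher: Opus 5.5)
\textbf{Verdict: genuine gap.} Your reduction is correct but leaves the essential case unproved, and neither of your two routes for claim (a) works as stated.

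\textbf{What is correct.} Write $u=\mathbf x_t^{\mathbf h}$. Then the hypothesis becomes $F_v^t[D\mathbf h]=0$, i.e.\ $F_z^t$ is free of $y_k$ for every $z\in\supp(v)$ and every $k$ with $h_k>0$. The clique property (b), which is available in the literature for skew-symmetrizable cluster algebras, then finishes. The paper itself does not prove this statement; it only quotes it from \cite{Cao-2023}.

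\textbf{Claim (a) is the whole theorem.} Since $(x_{k;t}\mid\mid z)_F=d_k\deg_{y_k}F_z^t$, claim (a) for a single cluster variable $z$ is \emph{exactly} the ``if'' direction of the statement for the pair of cluster variables $x_{k;t},z$. So you have reduced the theorem to its own two-variable case and left that case open.

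\textbf{The direct sketch does not close it.} Compatibility of $z$ and $x_{k;t}$ means that $\mathbf g_z^t$ and $\mathbf e_k$ are rays of a common cone of the $g$-vector fan. That is a global property of the fan. Sign-coherence (Theorem \ref{thm:GHKK}) and the one-step mutation rules for $g$-vectors and $F$-polynomials only control how these data change along a given mutation. They never produce a cluster containing both $z$ and $x_{k;t}$. Mutating at $k$ is also the wrong move, since it is the one direction that does not preserve $x_{k;t}$. You need some global input, for example the enough $g$-pairs property of Cao--Li, which is proved with scattering diagrams.

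\textbf{The fallback points at the wrong tool.} $\deg_{y_k}F_z^t$ is not in general a $d$-vector component: Fomin--Zelevinsky's conjecture $\mathbf d=\mathbf f$ fails in general, so there is no identification to transport. What you need is precisely the theorem that the $f$-compatibility degree $\deg_{y_k}F_z^t$ vanishes if and only if $z$ and $x_{k;t}$ are compatible. This is the compatibility property of Fu--Gyoda, whose proof relies on the Cao--Li results. If you cite that together with (b), your outline becomes a valid derivation, but all of the substance is then imported.

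\textbf{Comparison with $\tau$-tilting.} The analogue in the paper, Corollary \ref{cor-F=E}(ii), shows why the cluster case is hard. There your (a) is trivial: for a $\tau$-rigid module $N$, the polynomial $F_N$ is free of $y_k$ iff $\Hom_A(P_k,N)=0$, iff $(N,0)\oplus(0,P_k)$ is $\tau$-rigid. Your (b) is also automatic, because $\tau$-rigidity is a pairwise Hom-vanishing condition. In a general skew-symmetrizable cluster algebra there is no such categorification, and both (a) and (b) become deep theorems.
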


\begin{example}
Let us continue the example of cluster algebra $\mathcal A$  of type $A_2$ in Example \ref{ex:A2}. We know that 
the canonical expressions of $x_3,x_4,x_5$ with respect to  $t_0=({\bf x}, B)$ are given as follows:
\[x_3={x_1^{-1}x_2\cdot(1+\widehat y_1),}\;\;\;  x_4={x_1^{-1}\cdot (1+\widehat y_1+\widehat y_1\widehat y_2),}\;\;\;
    x_5={x_2^{-1}\cdot (1+\widehat y_2)}.\]
  We take $D=I_2$ to be the fixed skew-symmetrizer for the exchange matrices of $\mathcal A$. We have
\begin{align}
    (x_3\mid\mid x_4)_F&=F_{x_3}^{t_0}[D{\bf g}_{x_4}^{t_0}]+F_{x_4}^{t_0}[D{\bf g}_{x_3}^{t_0}]\nonumber\\
    &=(1+y_1)\begin{bmatrix}
        -1\\0
    \end{bmatrix}+(1+y_1+y_1y_2)\begin{bmatrix}
        -1\\
        1
    \end{bmatrix}\nonumber\\
    &=\max\{0,\;-1\}+\max\{0,\;-1,\;0\}\nonumber\\
    &=0,\nonumber\\
    (x_3\mid\mid x_5)_F&=(1+y_1)\begin{bmatrix}
        0\\
        -1
    \end{bmatrix}+(1+y_2)\begin{bmatrix}
        -1\\
        1
    \end{bmatrix}\nonumber\\
    &=\max\{0,\;0\}+\max\{0,\;1\}\nonumber\\
    &=1.\nonumber
\end{align}
Then by Theorem \ref{thm:F-compatible}, we know that $x_3x_4$ is a cluster monomial, while $x_3x_5$ is not.
\end{example}

\begin{proposition}[{\cite[Propositions 4.9, 4.16]{Cao-2023}}] \label{pro:F-additive}
   Let $u=\prod_{i=1}^nx_{i;t}^{h_i}$ and $v$ be two cluster monomials of $\mathcal A$. Then we have $(u\mid\mid v)_F=\sum_{i=1}^nh_i(x_{i;t}\mid\mid v)_F$. Moreover, we have
   \[ F_{v}^{w}[D{\bf g}_u^{w}]=\sum_{i=1}^nh_iF_{v}^w[D{\bf g}_{x_{i;t}}^w]\;\;\;\text{and}\;\;\;F_u^w[D{\bf g}_{v}^w]=\sum_{i=1}^nh_iF_{x_{i;t}}^w[D{\bf g}_{v}^w]
   \]
   for any vertex $w\in\mathbb T_n$.
\end{proposition}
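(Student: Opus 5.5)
The statement to prove is Proposition \ref{pro:F-additive}, which is cited from \cite{Cao-2023}. My plan is to derive it from the additivity of $g$-vectors and $F$-polynomials under products, together with the tropical product rule Corollary \ref{cor-F-prod}. Everything reduces to two identities of partial $F$-invariants at an arbitrary vertex $w$; the statement about $(u\mid\mid v)_F$ then follows by adding them.

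\emph{First identity.} Write $u=\prod_{i=1}^n x_{i;t}^{h_i}$. Since the canonical expression is multiplicative, the $g$-vector is additive and the $F$-polynomial is multiplicative:
\[
{\bf g}_u^w=\sum_i h_i{\bf g}_{x_{i;t}}^w, \qquad F_u^w=\prod_i (F_{x_{i;t}}^w)^{h_i}.
\]
This holds because $F$-polynomials are defined via principal coefficients, where the product of cluster variables has $g$-vector equal to the sum and $F$-polynomial equal to the product \cite{fomin_zelevinsky_2007}. Applying Corollary \ref{cor-F-prod} repeatedly, which is valid since all the factors are nonzero, gives
\[
F_u^w[D{\bf g}_v^w]=\sum_i h_iF_{x_{i;t}}^w[D{\bf g}_v^w].
\]

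\emph{Second identity.} Here we need
\[
F_v^w\Bigl[\sum_i h_iD{\bf g}_{x_{i;t}}^w\Bigr]=\sum_i h_iF_v^w[D{\bf g}_{x_{i;t}}^w].
\]
A tropical polynomial is a max of linear functions, so it is only convex and piecewise linear. It is therefore linear exactly on cones where a single vertex of $\mathsf P(F_v^w)$ attains the maximum. The key claim is that the whole cone $D\cdot\mathrm{cone}({\bf g}_{x_{1;t}}^w,\ldots,{\bf g}_{x_{n;t}}^w)$, that is, $D$ times the $g$-vector cone of the seed $t$, lies in one linearity domain of $F_v^w[-]$.

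\emph{Main obstacle.} Proving this claim is the hard step, and I would attack it by mutation invariance. First I would establish the mutation rule for tropical $F$-polynomials under a change of initial seed $w\to w'$. Tropicalizing the Fomin--Zelevinsky separation formula gives a rule of the form
\[
F_v^{w'}[{\bf r}']=F_v^w[{\bf r}]+(\text{a piecewise-linear correction depending only on } {\bf g}_v \text{ and } {\bf r}),
\]
where ${\bf r}\mapsto{\bf r}'$ is the tropical ($D$-twisted) $g$-vector mutation. That mutation is linear on each $g$-vector cone, by sign-coherence (Theorem \ref{thm:GHKK} (ii),(iii)). With this rule, linearity on a $g$-cone is preserved under moving $w$. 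So it suffices to choose $w=t$. In that case ${\bf g}_{x_{i;t}}^t={\bf e}_i$, and the cone in question is $D\mathbb R_{\geq0}^n$.

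On $D\mathbb R_{\ge0}^n$ we have $F_v^t[{\bf r}]=\max\langle{\bf v},{\bf r}\rangle$ over exponents ${\bf v}\in\mathbb N^n$. The constant term $1$ contributes ${\bf v}=0$, and the maximum is attained by the top exponent. I would justify that $F_v^t$ has a unique componentwise-maximal monomial by the $F$-polynomial degree results in \cite{fomin_zelevinsky_2007}: that monomial ${\bf y}^{{\bf f}}$ dominates every other exponent, so $F_v^t[{\bf r}]=\langle{\bf f},{\bf r}\rangle$ is linear there. Transporting this linearity back to a general $w$ via the mutation rule is the delicate verification, and I expect that to be the bulk of the work.
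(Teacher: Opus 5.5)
The paper gives no proof of this proposition; it quotes it from \cite{Cao-2023}. So your argument has to stand on its own, and in outline it is correct. Two points need to be made explicit.
\begin{itemize}
\item In the transport step, the correction term must be linear on $D\,\mathrm{cone}({\bf g}^w_{x_{1;t}},\ldots,{\bf g}^w_{x_{n;t}})$, not only the map ${\bf r}\mapsto{\bf r}'$. Both are non-linear only across the wall $r_k=0$. Row sign-coherence of $G_t^w$ (Theorem \ref{thm:GHKK} (ii)) keeps that cone inside one closed half-space $\pm r_k\geq 0$. That observation is the whole ``delicate verification''.
\item The top-monomial property is only a conjecture in \cite{fomin_zelevinsky_2007}. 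It holds because it follows from the constant-term-$1$ property, Theorem \ref{thm:GHKK} (iv). The non-negativity of coefficients there is also what justifies tropicalizing the substitution $F_v^{w'}({\bf y}')$.
\end{itemize}

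Compared with the short route through Theorem \ref{thm:F-inv}, which is stated just before this proposition, yours is the long way around. Evaluate the $F$-invariant at $w=t$, where $F_u^t=\prod_i(F^t_{x_{i;t}})^{h_i}=1$ and ${\bf g}_u^t={\bf h}$. This gives $(u\mid\mid v)_F=F_v^t[D{\bf h}]$ and $(x_{i;t}\mid\mid v)_F=F_v^t[D{\bf e}_i]$. Your top-monomial argument on $\mathbb R^n_{\geq 0}$ then yields the first claim at once. Your product-rule identity at an arbitrary $w$ and a subtraction finish the proof:
\[
F_v^w[D{\bf g}_u^w]=(u\mid\mid v)_F-F_u^w[D{\bf g}_v^w]=\sum_i h_i\bigl((x_{i;t}\mid\mid v)_F-F^w_{x_{i;t}}[D{\bf g}_v^w]\bigr)=\sum_i h_iF_v^w[D{\bf g}^w_{x_{i;t}}].
\]
This reverses your logical order: the hard partial identity is deduced from the symmetric one, not the other way round.

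The mutation bookkeeping you redo one-sidedly is exactly what Theorem \ref{thm:F-inv} already packages for the symmetric sum, where the corrections cancel. Your route does give explicit control of each partial invariant under a single mutation. Its cone-linearity conclusion is not stronger, though. Applying the short proof to all ${\bf h}\in\mathbb N^n$ and using positive homogeneity and continuity of $F_v^w[-]$ recovers linearity of $F_v^w[-]$ on every $D$-twisted $g$-cone. Both routes rest on the same base-seed input: the top monomial of $F_v^t$.
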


\subsection{Newton polytopes of $F$-polynomials}

\begin{lemma} \label{lem:F-zx=0}
Let $\mathcal A$ be a cluster algebra with initial seed $({\bf x}_{t_0}, B_{t_0})$. If two cluster variables $x$ and $z$ are contained in the same cluster,
then $F_z^{t_0}[D{\bf g}_x^{t_0}]=0=F_x^{t_0}[D{\bf g}_z^{t_0}]$.
\end{lemma}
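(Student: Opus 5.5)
We combine Theorem \ref{thm:F-compatible} with the non-negativity of tropicalized $F$-polynomials. Since $x$ and $z$ lie in a common cluster, the product $x\cdot z$ is a cluster monomial. Theorem \ref{thm:F-compatible} then gives $(x\mid\mid z)_F=0$.

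Next we evaluate this $F$-invariant at the vertex $t=t_0$. By Theorem \ref{thm:F-inv}, the $F$-invariant does not depend on the vertex, so
\[
0=(x\mid\mid z)_F=F_x^{t_0}[D{\bf g}_z^{t_0}]+F_z^{t_0}[D{\bf g}_x^{t_0}].
\]

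Both summands are non-negative. By Theorem \ref{thm:GHKK} (iv), each of $F_x^{t_0}$ and $F_z^{t_0}$ has constant term $1$. The vectors $D{\bf g}_z^{t_0}$ and $D{\bf g}_x^{t_0}$ lie in $\mathbb Z^n$, because $D$ is an integer diagonal matrix and $g$-vectors are integral. Remark \ref{rmk:cons-1} (c) therefore shows that each summand lies in $\mathbb Z_{\geq 0}$.

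Two non-negative integers with sum $0$ are both $0$, which is the claim.

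The argument rests entirely on the cited theorems, so there is no real obstacle. The one point to check is the degenerate case $x=z$: here $x\cdot x=x^2$ is still a cluster monomial, so the same argument applies.
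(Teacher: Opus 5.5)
Your proof is correct and matches the paper's argument: $xz$ is a cluster monomial, so Theorem \ref{thm:F-compatible} gives $(x\mid\mid z)_F=F_x^{t_0}[D{\bf g}_z^{t_0}]+F_z^{t_0}[D{\bf g}_x^{t_0}]=0$, and since both summands are non-negative, each must vanish. Your integrality remark is not actually needed, since constant term $1$ already gives $F[{\bf r}]\geq 0$ for every real ${\bf r}$.
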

\begin{proof}
 Since the two cluster variables $x$ and $z$ are contained in the same cluster and by Theorem \ref{thm:F-compatible}, we have $F_x^{t_0}[D{\bf g}_z^{t_0}]+F_z^{t_0}[D{\bf g}_x^{t_0}]=(x\mid\mid z)_F=0$.  Since $F_x^{t_0}[D{\bf g}_z^{t_0}]\geq 0$ and $F_z^{t_0}[D{\bf g}_x^{t_0}]\geq 0$, we get $F_z^{t_0}[D{\bf g}_x^{t_0}]=0=F_x^{t_0}[D{\bf g}_z^{t_0}]$.
\end{proof}

\begin{proposition}\label{prop:exchange-pair}
    Let $\mathcal A$ be a cluster algebra with initial seed $({\bf x}_{t_0}, B_{t_0})$ and $D=diag(d_1,\ldots,d_n)$ a fixed skew-symmetrizer for the exchange matrices of $\mathcal A$. Let $({\bf x}_{t'}, B_{t'})=\mu_k({\bf x}_t, B_t)$ be a mutation in $\mathcal A$. Then $$(x_{k;t}\mid\mid x_{k;t'})_F=d_k.$$ If moreover, the $k$-th column of $C_t^{t_0}$ is non-positive, then we have  
    \[ F_{x_{k;t'}}^{t_0}[D{\bf g}_{x_{k;t}}^{t_0}]=0\quad\text{and}\quad  F_{x_{k;t}}^{t_0}[D{\bf g}_{x_{k;t'}}^{t_0}]=d_k.
    \]
\end{proposition}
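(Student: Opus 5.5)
The first claim, $(x_{k;t}\mid\mid x_{k;t'})_F=d_k$, will be computed at a convenient vertex. Theorem \ref{thm:F-inv} says the $F$-invariant does not depend on the vertex, so I choose the vertex $t$ itself. There $x_{k;t}$ is an initial cluster variable, hence $F^t_{x_{k;t}}=1$ and $F^t_{x_{k;t}}[-]\equiv 0$. Therefore
\[(x_{k;t}\mid\mid x_{k;t'})_F=F^t_{x_{k;t'}}[D{\bf g}^t_{x_{k;t}}]=F^t_{x_{k;t'}}[d_k{\bf e}_k].\]
From the exchange relation, $x_{k;t'}={\bf x}_t^{-{\bf e}_k+\sum_j[-b^t_{jk}]_+{\bf e}_j}(1+\hat y_{k;t})$. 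So with respect to $t$ the $F$-polynomial is $1+y_k$, and $(1+y_k)[d_k{\bf e}_k]=\max\{0,d_k\}=d_k$.

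For the refined statement, assume the $k$-th column ${\bf c}$ of $C_t^{t_0}$ is non-positive. I would compute $F^{t_0}_{x_{k;t'}}[D{\bf g}^{t_0}_{x_{k;t}}]$ directly and then get the other partial invariant by subtraction. The separation formula for the exchange relation expressed at $t_0$ (Fomin–Zelevinsky, \cite{fomin_zelevinsky_2007}, Prop.~5.1/(6.x)) gives
\[F^{t_0}_{x_{k;t}}F^{t_0}_{x_{k;t'}}={\bf y}^{[{\bf c}]_+}\prod_j (F^{t_0}_{x_{j;t}})^{[b^t_{jk}]_+}+{\bf y}^{[-{\bf c}]_+}\prod_j (F^{t_0}_{x_{j;t}})^{[-b^t_{jk}]_+}.\]
All $F$-polynomials here have nonnegative coefficients (Theorem \ref{thm:GHKK}(iv)), so no cancellation occurs and the Newton polytope of a sum is the convex hull of the union. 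Tropicalizing with Corollary \ref{cor-F-prod} then gives, for every ${\bf r}$,
\[F_{x_{k;t}}[{\bf r}]+F_{x_{k;t'}}[{\bf r}]=\max\Big\{\langle[{\bf c}]_+,{\bf r}\rangle+\sum_j[b^t_{jk}]_+F_{x_{j;t}}[{\bf r}],\ \langle[-{\bf c}]_+,{\bf r}\rangle+\sum_j[-b^t_{jk}]_+F_{x_{j;t}}[{\bf r}]\Big\},\]
with all $F$'s taken at $t_0$.

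Now set ${\bf r}=D{\bf g}^{t_0}_{x_{k;t}}$. On the left, $F_{x_{k;t}}[{\bf r}]=0$ by Lemma \ref{lem:F-zx=0}, applied to $x_{k;t}$ with itself. On the right, for $j\neq k$ we have $F_{x_{j;t}}[{\bf r}]=0$ by Lemma \ref{lem:F-zx=0}, and $b^t_{kk}=0$, so all the product terms vanish. Since ${\bf c}\le 0$, we have $[{\bf c}]_+=0$ and $[-{\bf c}]_+=-{\bf c}$. Hence
\[F_{x_{k;t'}}[{\bf r}]=\max\{0,\langle -{\bf c},D{\bf g}^{t_0}_{x_{k;t}}\rangle\}.\]
By Theorem \ref{thm:GHKK}(i), $(G^{t_0}_t)^TDC^{t_0}_tD^{-1}=I_n$, so $({\bf g}^{t_0}_{x_{k;t}})^TD{\bf c}=d_k$ and $\langle -{\bf c},D{\bf g}^{t_0}_{x_{k;t}}\rangle=-d_k<0$. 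This gives $F^{t_0}_{x_{k;t'}}[D{\bf g}^{t_0}_{x_{k;t}}]=0$.

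Finally, Theorem \ref{thm:F-inv} expresses $(x_{k;t}\mid\mid x_{k;t'})_F$ as the sum of the two partial invariants at $t_0$. The total is $d_k$ by the first part and one summand is $0$, so $F^{t_0}_{x_{k;t}}[D{\bf g}^{t_0}_{x_{k;t'}}]=d_k$.

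The main obstacle is justifying the $F$-polynomial exchange relation at $t_0$ with the correct placement of the ${\bf y}^{[\pm{\bf c}]_+}$ factors. This is standard (the $Y$-seed/$C$-matrix form of \cite[Prop.~5.1]{fomin_zelevinsky_2007}), but the sign conventions need care. The convex-hull step also needs the positivity from Theorem \ref{thm:GHKK}(iv), so that the Newton polytope of the sum is exactly the convex hull of the union.
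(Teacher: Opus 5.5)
Your proposal is correct and follows the paper's proof essentially step for step. You compute the $F$-invariant at vertex $t$ using $F^t_{x_{k;t}}=1$ and $F^t_{x_{k;t'}}=1+y_k$, then tropicalize the Fomin--Zelevinsky exchange relation for $F$-polynomials at $t_0$, evaluate at $D{\bf g}^{t_0}_{x_{k;t}}$ via Lemma \ref{lem:F-zx=0} and $(G^{t_0}_t)^TDC^{t_0}_tD^{-1}=I_n$, and get the other partial invariant by subtraction. Your explicit appeal to nonnegativity of coefficients (Theorem \ref{thm:GHKK}(iv)) to justify that tropicalizing the sum gives a maximum fills a point the paper leaves implicit when it cites only Corollary \ref{cor-F-prod}.
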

\begin{proof}
 Let us use the vertex $t$ to calculate the $F$-invariant $(x_{k;t}\mid\mid x_{k;t'})_F$. Since
 $F_{x_{k;t}}^t=1, F_{x_{k;t'}}^t=1+y_k$ and ${\bf g}_{x_{k;t}}^t={\bf e}_k$,  We have
 \[
  (x_{k;t}\mid\mid x_{k;t'})_F=F_{x_{k;t}}^t[D{\bf g}_{x_{k;t'}}^t]+F_{x_{k;t'}}^t[D{\bf g}_{x_{k;t}}^t]=0+(1+y_k)[D{\bf e}_k]=d_k.
 \]
 On the other hand, if we use the vertex $t_0$ to calculate $(x_{k;t}\mid\mid x_{k;t'})_F$, we have
\[d_k=(x_{k;t}\mid\mid x_{k;t'})_F=F_{x_{k;t}}^{t_0}[D{\bf g}_{x_{k;t'}}^{t_0}]+F_{x_{k;t'}}^{t_0}[D{\bf g}_{x_{k;t}}^{t_0}].\]

Now let us prove that $F_{x_{k;t'}}^{t_0}[D{\bf g}_{x_{k;t}}^{t_0}]=0$ under the assumption that the $k$-th column ${\bf c}_{k;t}$ of $C_t^{t_0}$ is non-positive. In this case, we have $-{\bf c}_{k;t}\in\mathbb Z_{\geq 0}^n$. By the mutation rule \cite[Proposition 5.1]{fomin_zelevinsky_2007} of $F$-polynomials, we have
\[ F_{x_{k;t}}^{t_0}\cdot F_{x_{k;t'}}^{t_0}=\prod_{b_{ik}^t>0}(F_{x_{i;t}}^{t_0})^{b_{ik}^t}+{\bf y}^{-{\bf c}_{k;t}}\cdot \prod_{b_{ik}^t<0}(F_{x_{i;t}}^{t_0})^{-b_{ik}^t}.
\]
By Corollary \ref{cor-F-prod}, we have the tropical version of the above equality:
\begin{eqnarray}\label{eqn:trop-exchange}
    F_{x_{k;t}}^{t_0}[{\bf r}]+F_{x_{k;t'}}^{t_0}[{\bf r}]=\max\{\sum_{b_{ik}^t>0}b_{ik}^tF_{x_{i;t}}^{t_0}[{\bf r}],\;\langle -{\bf c}_{k;t},{\bf r}\rangle+\sum_{b_{ik}^t<0}(-b_{ik}^t)F_{x_{i;t}}^{t_0}[{\bf r}]\}
\end{eqnarray}
for any ${\bf r}\in\mathbb R^n$.  By Lemma \ref{lem:F-zx=0}, we know that $F_{x_{i;t}}^{t_0}[D{\bf g}_{x_{k;t}}^{t_0}]=0$ for any $i\in[1,n]$. Then by taking ${\bf r}=D{\bf g}_{x_{k;t}}^{t_0}$ in \eqref{eqn:trop-exchange}, we get
\[0+ F_{x_{k;t'}}^{t_0}[D{\bf g}_{x_{k;t}}^{t_0}]=\max\{0,\;\langle -{\bf c}_{k;t},D{\bf g}_{x_{k;t}}^{t_0}\rangle +0\}=\max\{0,\;\langle -{\bf c}_{k;t},D{\bf g}_{x_{k;t}}^{t_0}\rangle\}.\]
By Theorem \ref{thm:GHKK} (i), we have $(G_t^{t_0})^TDC_t^{t_0}D^{-1}=I_n$. Thus $\langle -{\bf c}_{k;t},D{\bf g}_{x_{k;t}}^{t_0}\rangle\}=-d_k$. Hence, we have \[F_{x_{k;t'}}^{t_0}[D{\bf g}_{x_{k;t}}^{t_0}]=\max\{0,\;\langle -{\bf c}_{k;t},D{\bf g}_{x_{k;t}}^{t_0}\rangle\}=\max\{0,\;-d_k\}=0.\]
Thus $F_{x_{k;t}}^{t_0}[D{\bf g}_{x_{k;t'}}^{t_0}]=(x_{k;t}\mid\mid x_{k;t'})_F-F_{x_{k;t'}}^{t_0}[D{\bf g}_{x_{k;t}}^{t_0}]=d_k-0=d_k$.
\end{proof}

Recall that for a non-zero polynomial $F\in\mathbb Z[y_1,\ldots, y_n]$, $\mathsf P(F)$ denotes the Newton polytope of $F$.
\begin{lemma}\label{lem:ak=bk}
Let $u=\prod_{i=1}^nx_{i;t}^{a_i}$ and $v=\prod_{i=1}^nx_{i;t}^{b_i}$ be two cluster monomials of $\mathcal A$ in some cluster ${\bf x}_t$. If $\mathsf P(F_u^{t_0})=\mathsf P(F_{v}^{t_0})$, then $a_k=b_k$ whenever the $k$-th column of the $C$-matrix $C_t^{t_0}$ is non-positive.
\end{lemma}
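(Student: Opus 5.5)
Since the tropical polynomial of a non-zero polynomial is the support function of its Newton polytope (Proposition \ref{pro:trop-support}), the hypothesis $\mathsf P(F_u^{t_0})=\mathsf P(F_v^{t_0})$ gives
\[F_u^{t_0}[{\bf r}]=F_v^{t_0}[{\bf r}]\quad\text{for all } {\bf r}\in\mathbb R^n.\]
The plan is to evaluate both sides at the single vector ${\bf r}=D{\bf g}_{x_{k;t'}}^{t_0}$, where $t'$ is the vertex with $({\bf x}_{t'},B_{t'})=\mu_k({\bf x}_t,B_t)$. This is the vector singled out by Proposition \ref{prop:exchange-pair}.

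First, expand both sides using the additivity in Proposition \ref{pro:F-additive} with $w=t_0$:
\[F_u^{t_0}[D{\bf g}_{x_{k;t'}}^{t_0}]=\sum_{i=1}^n a_i\,F_{x_{i;t}}^{t_0}[D{\bf g}_{x_{k;t'}}^{t_0}],\]
and similarly for $v$ with the $b_i$.

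Next, compute each summand.
\begin{itemize}
\item For $i\neq k$, the variables $x_{i;t}$ and $x_{k;t'}$ lie in the common cluster ${\bf x}_{t'}$. Lemma \ref{lem:F-zx=0} then gives $F_{x_{i;t}}^{t_0}[D{\bf g}_{x_{k;t'}}^{t_0}]=0$.
\item For $i=k$, the $k$-th column of $C_t^{t_0}$ is non-positive by assumption. The second part of Proposition \ref{prop:exchange-pair} then gives $F_{x_{k;t}}^{t_0}[D{\bf g}_{x_{k;t'}}^{t_0}]=d_k$.
\end{itemize}
Hence $F_u^{t_0}[D{\bf g}_{x_{k;t'}}^{t_0}]=a_kd_k$ and $F_v^{t_0}[D{\bf g}_{x_{k;t'}}^{t_0}]=b_kd_k$. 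Equating these and using $d_k>0$ yields $a_k=b_k$.

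The only point needing care is that the additivity of Proposition \ref{pro:F-additive} applies to the partial $F$-invariant in the first slot, $F_u^w[D{\bf g}_{v}^w]=\sum_i h_iF_{x_{i;t}}^w[D{\bf g}_{v}^w]$, with $v$ taken to be the cluster variable $x_{k;t'}$. This is exactly the form stated there, so I do not expect a real obstacle. The substantive input is Proposition \ref{prop:exchange-pair}, which is already available.
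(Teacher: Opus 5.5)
Your proof is correct and follows the paper's argument: you evaluate at ${\bf r}=D{\bf g}_{x_{k;t'}}^{t_0}$ and expand with Proposition \ref{pro:F-additive}. The off-diagonal terms vanish by Lemma \ref{lem:F-zx=0}, and Proposition \ref{prop:exchange-pair} gives the value $d_k>0$ for $i=k$, which forces $a_kd_k=b_kd_k$.
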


\begin{proof}
Suppose that $k$-th column of the $C$-matrix $C_t^{t_0}$ is non-positive. By Proposition \ref{prop:exchange-pair}, we have \[F_{x_{k;t}}^{t_0}[D{\bf g}_{x_{k;t'}}^{t_0}]=d_k>0.\] For $i\neq k$, we know that 
$\{x_{i;t}, x_{k;t'}\}\subseteq [{\bf x}_{t'}]$. 
Then by Lemma \ref{lem:F-zx=0}, we have
\begin{eqnarray}\label{eqn:Fdg=0}
   F_{x_{i;t}}^{t_0}[D{\bf g}_{x_{k;t'}}^{t_0}]=0, \quad \forall i\neq k. 
\end{eqnarray}
Since $u=\prod_{i=1}^nx_{i;t}^{a_i}$ and $v=\prod_{i=1}^nx_{i;t}^{b_i}$ and
by Proposition \ref{pro:F-additive}, we have
\begin{eqnarray*}
 F_u^{t_0}[D{\bf g}_{x_{k;t'}}^{t_0}]&=&\sum_{i=1}^na_iF_{x_{i;t}}^{t_0}[D{\bf g}_{x_{k;t'}}^{t_0}] \overset{\eqref{eqn:Fdg=0}}{=}a_kF_{x_{k;t}}^{t_0}[D{\bf g}_{x_{k;t'}}^{t_0}]=a_kd_k,\\
 F_{v}^{t_0}[D{\bf g}_{x_{k;t'}}^{t_0}]&=&\sum_{i=1}^nb_iF_{x_{i;t}}^{t_0}[D{\bf g}_{x_{k;t'}}^{t_0}] \overset{\eqref{eqn:Fdg=0}} {=}b_kF_{x_{k;t}}^{t_0}[D{\bf g}_{x_{k;t'}}^{t_0}]=b_kd_k.
\end{eqnarray*}
Since the two $F$-polynomials $F_{u}^{t_0}$ and  $F_{v}^{t_0}$ have the same Newton polytope, we know that $F_u^{t_0}[{\bf r}]=F_{v}^{t_0}[{\bf r}]$ for any vector ${\bf r}\in\mathbb Z^n$. In particular, we have
\[ a_kd_k= F_u^{t_0}[D{\bf g}_{x_{k;t'}}^{t_0}]=F_{v}^{t_0}[D{\bf g}_{x_{k;t'}}^{t_0}]=b_kd_k.
\]
Since $d_k>0$, we obtain $a_k=b_k$.
\end{proof}

\begin{lemma}\label{lem:v-v1}
   Let $u=\prod_{i=1}^nx_{i;t}^{a_i}$ and $v=\prod_{i=1}^nx_{i;t}^{b_i}$ be two cluster monomials of $\mathcal A$ in some cluster ${\bf x}_t$. If  $\mathsf P(F_u^{t_0})=\mathsf P(F_{v}^{t_0})$ and $a_k=b_k$ for some $k\in[1,n]$, then  $\mathsf P(F_{u'}^{t_0})=\mathsf P(F_{v'}^{t_0})$, where $u'=u/x_{k;t}^{a_k}=\prod_{i\neq k}^nx_{i;t}^{a_i}$ and $v'=v/x_{k;t}^{b_k}=\prod_{i\neq k }^nx_{i;t}^{b_i}$.
\end{lemma}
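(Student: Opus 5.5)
The plan is to factor off the common power of $x_{k;t}$ at the level of $F$-polynomials and then cancel it with the Minkowski-sum cancellation law.

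First, write $w:=x_{k;t}^{a_k}=x_{k;t}^{b_k}$, so that $u=u'\cdot w$ and $v=v'\cdot w$. All of $u,u',v,v',w$ are cluster monomials in the same cluster ${\bf x}_t$.

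Second, show that $F$-polynomials are multiplicative within a cluster: for cluster monomials $p,q$ in a common cluster,
\[
F_{pq}^{t_0}=F_p^{t_0}F_q^{t_0}.
\]
This follows from the separation formula. Write $p={\bf x}_{t_0}^{{\bf g}_p^{t_0}}F_p^{t_0}(\hat y)$ and $q={\bf x}_{t_0}^{{\bf g}_q^{t_0}}F_q^{t_0}(\hat y)$. Their product has the form ${\bf x}_{t_0}^{{\bf g}_p^{t_0}+{\bf g}_q^{t_0}}(F_p^{t_0}F_q^{t_0})(\hat y)$. Since $F$-polynomials and $g$-vectors are defined canonically via principal coefficients \cite{fomin_zelevinsky_2007}, and cluster monomials with principal coefficients multiply the same way, we get ${\bf g}_{pq}^{t_0}={\bf g}_p^{t_0}+{\bf g}_q^{t_0}$ and $F_{pq}^{t_0}=F_p^{t_0}F_q^{t_0}$. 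Equivalently, $F_{pq}^{t_0}$ is the product of the $F$-polynomials of the individual cluster variables, raised to the exponents. Applying this gives
\[
F_u^{t_0}=F_{u'}^{t_0}F_w^{t_0}\quad\text{and}\quad F_v^{t_0}=F_{v'}^{t_0}F_w^{t_0}.
\]

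Third, pass to Newton polytopes. All these polynomials are nonzero, since they have constant term $1$ by Theorem \ref{thm:GHKK} (iv). Proposition \ref{pro:GKZ} then gives
\[
\mathsf P(F_u^{t_0})=\mathsf P(F_{u'}^{t_0})+\mathsf P(F_w^{t_0})\quad\text{and}\quad \mathsf P(F_v^{t_0})=\mathsf P(F_{v'}^{t_0})+\mathsf P(F_w^{t_0}).
\]
The hypothesis $\mathsf P(F_u^{t_0})=\mathsf P(F_v^{t_0})$ makes the two right-hand sides equal, and Corollary \ref{cor:cancel-law} cancels $\mathsf P(F_w^{t_0})$. This yields $\mathsf P(F_{u'}^{t_0})=\mathsf P(F_{v'}^{t_0})$.

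An alternative route avoids the multiplicativity of $F$-polynomials and works at the tropical level. Proposition \ref{pro:F-additive} gives
\[
F_u^{t_0}[D{\bf r}']=F_{u'}^{t_0}[D{\bf r}']+a_kF_{x_{k;t}}^{t_0}[D{\bf r}']
\]
for ${\bf r}'$ ranging over $g$-vectors. That only controls finitely many directions, however, so the polytope route is the one to use.

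The main point needing care is the multiplicativity $F_{pq}^{t_0}=F_p^{t_0}F_q^{t_0}$ within a cluster. It is standard (\cite[Prop. 3.13/6.?]{fomin_zelevinsky_2007}) and follows directly from the definition of $F$-polynomials of cluster monomials as specializations of principal-coefficient expressions. The rest is formal.
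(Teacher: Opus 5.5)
Your proposal is correct and follows the paper's proof: the paper also writes $F_u^{t_0}=F_{u'}^{t_0}H$ and $F_v^{t_0}=F_{v'}^{t_0}H$ with $H=(F_{x_{k;t}}^{t_0})^{a_k}$, applies Proposition \ref{pro:GKZ}, and cancels $\mathsf P(H)$ using Corollary \ref{cor:cancel-law}. Your added justification of multiplicativity of $F$-polynomials within a cluster, via principal coefficients, covers a step the paper takes for granted.
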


\begin{proof}
We know that $u=u'\cdot x_{k;t}^{a_k}$ and $v=v'\cdot x_{k;t}^{b_k}=v'\cdot x_{k;t}^{a_k}$. Let $H:=(F_{x_{k;t}}^{t_0})^{a_k}$. Then we know
 $F_u^{t_0}=F_{u'}^{t_0}\cdot H$ and $F_{v}^{t_0}=F_{v'}^{t_0}\cdot H$. By Proposition \ref{pro:GKZ}, we have $\mathsf P(F_u^{t_0})=\mathsf P(F_{u'}^{t_0})+\mathsf P(H)$ and $\mathsf P(F_{v}^{t_0})=\mathsf P(F_{v'}^{t_0})+\mathsf P(H)$. Since $\mathsf P(F_u^{t_0})=\mathsf P(F_{v}^{t_0})$ and by Corollary \ref{cor:cancel-law}, we get $\mathsf P(F_{u'}^{t_0})=\mathsf P(F_{v'}^{t_0})$.
\end{proof}

\begin{lemma}[{\cite[Corollary 4.21]{Cao-2023}}] \label{lem:uu1}
    Let $u$ and $v$ be two cluster monomials of $\mathcal A$.
  If $\mathsf P(F_u^{t_0})=\mathsf P(F_{v}^{t_0})$, then the product $uv$ remains a cluster monomial of $\mathcal A$.  
\end{lemma}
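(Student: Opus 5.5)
The plan is to show that the $F$-invariant $(u\mid\mid v)_F$ vanishes and then invoke Theorem \ref{thm:F-compatible}. Work at the initial vertex $t_0$:
\[
(u\mid\mid v)_F=F_u^{t_0}[D{\bf g}_v^{t_0}]+F_v^{t_0}[D{\bf g}_u^{t_0}].
\]
By Remark \ref{rmk:cons-1}(a) and Proposition \ref{pro:trop-support}, the tropical polynomial depends only on the Newton polytope. Since $\mathsf P(F_u^{t_0})=\mathsf P(F_v^{t_0})$, we have $F_u^{t_0}[{\bf r}]=F_v^{t_0}[{\bf r}]$ for every ${\bf r}\in\mathbb R^n$. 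Hence
\[
F_u^{t_0}[D{\bf g}_v^{t_0}]=F_v^{t_0}[D{\bf g}_v^{t_0}]\qquad\text{and}\qquad F_v^{t_0}[D{\bf g}_u^{t_0}]=F_u^{t_0}[D{\bf g}_u^{t_0}].
\]
So $(u\mid\mid v)_F=F_v^{t_0}[D{\bf g}_v^{t_0}]+F_u^{t_0}[D{\bf g}_u^{t_0}]$.

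The next step is to identify each summand with half of a self $F$-invariant:
\[
(u\mid\mid u)_F=F_u^{t_0}[D{\bf g}_u^{t_0}]+F_u^{t_0}[D{\bf g}_u^{t_0}]=2F_u^{t_0}[D{\bf g}_u^{t_0}].
\]
Since $u\cdot u=u^2$ is again a cluster monomial (same cluster), Theorem \ref{thm:F-compatible} gives $(u\mid\mid u)_F=0$, hence $F_u^{t_0}[D{\bf g}_u^{t_0}]=0$. Alternatively, write $u=\prod x_{i;t}^{h_i}$ and use Proposition \ref{pro:F-additive} together with Lemma \ref{lem:F-zx=0}; each term $F_{x_{i;t}}^{t_0}[D{\bf g}_{x_{j;t}}^{t_0}]$ vanishes because the two variables lie in the same cluster, including the case $i=j$. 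The same holds for $v$. Therefore $(u\mid\mid v)_F=0$, and Theorem \ref{thm:F-compatible} yields that $uv$ is a cluster monomial.

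There is no real obstacle here. The only point needing care is the double use of bilinearity in Proposition \ref{pro:F-additive}, applied once in each argument, to see that $F_u^{t_0}[D{\bf g}_u^{t_0}]=\sum_{i,j}h_ih_jF_{x_{i;t}}^{t_0}[D{\bf g}_{x_{j;t}}^{t_0}]$. The direct route through $(u\mid\mid u)_F=0$ avoids even this.
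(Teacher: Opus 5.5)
Your proposal is correct and follows essentially the same route as the paper: you use $(u\mid\mid u)_F=(v\mid\mid v)_F=0$ to get $F_u^{t_0}[D{\bf g}_u^{t_0}]=F_v^{t_0}[D{\bf g}_v^{t_0}]=0$, and equality of tropical polynomials to swap arguments, giving $(u\mid\mid v)_F=0$. Theorem \ref{thm:F-compatible} then finishes the argument; your alternative via Proposition \ref{pro:F-additive} and Lemma \ref{lem:F-zx=0} is also valid but unnecessary.
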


\begin{proof}
    Since $u$ and $v$ are cluster monomials and by Theorem \ref{thm:F-compatible}, we have
\[
(u\mid\mid u)_F=2F_u^{t_0}[D{\bf g}_u^{t_0}]=0\;\;\;\text{and}\;\;\;(v\mid\mid v)_F=2F_{v}^{t_0}[D{\bf g}_{v}^{t_0}]=0.
\]
We get $F_u^{t_0}[D{\bf g}_u^{t_0}]=0=F_{v}^{t_0}[D{\bf g}_{v}^{t_0}]$.
Since $F_u^{t_0}$ and $F_{v}^{t_0}$ have the same Newton polytope, we have $F_u^{t_0}[{\bf r}]=F_{v}^{t_0}[{\bf r}]$ for any ${\bf r}\in\mathbb Z^n$. Hence,
we have
\[
(u\mid\mid v)_F=F_u^{t_0}[D{\bf g}_{v}^{t_0}]+F_{v}^{t_0}[D{\bf g}_u^{t_0}]=F_{v}^{t_0}[D{\bf g}_{v}^{t_0}]+F_u^{t_0}[D{\bf g}_u^{t_0}]=0.
\]
Then by Theorem \ref{thm:F-compatible}, we know that the product $uv$ is  a cluster monomial. 
\end{proof}

Denote by $\mathcal X$ the set of  cluster variables of $\mathcal A$. Given a cluster monomial $u={\bf x}_t^{\bf h}=\prod x_{i;t}^{h_i}$ in a seed $({\bf x}_t, B_t)$, we define its {\em support set} $\supp(u)$ by
\[\supp(u):=\{x_{i;t}\mid i\in[1,n], \;h_i\neq 0\}\subseteq \mathcal X.\] We remark that this set only depends on $u$, not on the choice of $t\in\mathbb T_n$ and ${\bf h}\in\mathbb Z^{n}$ such that $u={\bf x}_t^{\bf h}$.

\begin{theorem}
Let $\mathcal A$ be a skew-symmetrizable cluster algebra with initial seed $({\bf x}_{t_0}, B_{t_0})$. Let $u$ and $v$ be two cluster monomials in non-initial cluster variables.
 If the two $F$-polynomials $F_u^{t_0}$ and $F_{v}^{t_0}$ have the same Newton polytope, i.e., $\mathsf P(F_u^{t_0})=\mathsf P(F_{v}^{t_0})$, then $u=v$.   
\end{theorem}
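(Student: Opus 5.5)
We argue by induction on $|\supp(uv)|$, following the strategy outlined in the introduction. By Lemma \ref{lem:uu1}, the product $uv$ is a cluster monomial, so $u$ and $v$ lie in a common cluster. If $\supp(uv)=\emptyset$, then $u=v=1$ and there is nothing to prove. Otherwise, let $U:=\supp(uv)$, which is a partial cluster consisting of non-initial cluster variables. By Theorem \ref{thm:CGY-completion}, $U$ has a left Bongartz completion $[{\bf x}_s]$ with respect to $t_0$. Write $u=\prod_i x_{i;s}^{a_i}$ and $v=\prod_i x_{i;s}^{b_i}$; then $a_i=b_i=0$ whenever $x_{i;s}\notin U$.

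The key claim is the following: there exists an index $k$ with $x_{k;s}\in U$ such that the $k$-th column of $C_s^{t_0}$ is non-positive. To prove it, suppose instead that every column of $C_s^{t_0}$ indexed by an element of $U$ is non-negative (by Theorem \ref{thm:GHKK}(iii), each column is sign-coherent). Columns indexed outside $U$ are non-negative by condition (b) of Definition \ref{def:completion}. Then $C_s^{t_0}$ is a non-negative matrix, and Proposition \ref{prop:C-positive} gives $[{\bf x}_s]=[{\bf x}_{t_0}]$. This forces $U\subseteq[{\bf x}_{t_0}]$, contradicting the assumption that $U$ is a non-empty set of non-initial cluster variables. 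Hence such a $k$ exists; equivalently, the mutation $\mu_k$ at $s$ is red.

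With this $k$ fixed, Lemma \ref{lem:ak=bk} applied at vertex $s$ gives $a_k=b_k$. Since $x_{k;s}\in U$, at least one of $a_k,b_k$ is non-zero, so both are positive. Set $u'=u/x_{k;s}^{a_k}$ and $v'=v/x_{k;s}^{b_k}$. By Lemma \ref{lem:v-v1}, $\mathsf P(F_{u'}^{t_0})=\mathsf P(F_{v'}^{t_0})$. Moreover, $u'$ and $v'$ are again cluster monomials in non-initial variables, and $\supp(u'v')=U\setminus\{x_{k;s}\}$ is strictly smaller than $U$. The induction hypothesis then yields $u'=v'$, and therefore $u=u'x_{k;s}^{a_k}=v'x_{k;s}^{b_k}=v$.

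The main obstacle is the existence of the red index $k$ inside $U$. This step is exactly where the hypothesis that the variables are non-initial is used. It relies on combining the defining property of the left Bongartz completion with Proposition \ref{prop:C-positive}. All the other steps are formal consequences of Lemmas \ref{lem:ak=bk}, \ref{lem:v-v1} and \ref{lem:uu1}.
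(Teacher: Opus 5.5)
Your proposal is correct and follows essentially the same route as the paper. You use the same steps in the same order: Lemma \ref{lem:uu1}, then the left Bongartz completion of $\supp(uv)$, then the existence of a non-positive $C$-column indexed inside $\supp(uv)$ via Proposition \ref{prop:C-positive}, then Lemma \ref{lem:ak=bk} and the cancellation in Lemma \ref{lem:v-v1}, and finally induction on $|\supp(uv)|$.
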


\begin{proof}
Since $\mathsf P(F_u^{t_0})=\mathsf P(F_{v}^{t_0})$ and by Lemma \ref{lem:uu1}, the product $uv$ remains a cluster monomial. Thus its support set $\supp(uv)$ is a partial cluster of $\mathcal A$.

Let $[{\bf x}_{s}]$ be the left Bongartz completion of $\supp(uv)$ with respect to vertex $t_0$. 
Then $\supp(uv)\subseteq [{\bf x}_s]$. We can write $u=\prod_{i=1}^nx_{i;s}^{a_i}$ and $v=\prod_{i=1}^nx_{i;s}^{b_i}$. Thus $uv=\prod_{i=1}^nx_{i;s}^{a_i+b_i}$. We know that $a_i+b_i>0$ if and only if $x_{i;s}\in\supp(uv)$.

Since $[{\bf x}_{s}]$ is the left Bongartz completion of $\supp(uv)$ with respect to vertex $t_0$, 
we know that the $j$-th column of the $C$-matrix $C_s^{t_0}$ is non-negative for any $j$ with $x_{j;s}\in [{\bf x}_s]\setminus \supp(uv)$. For the signs of the other columns of $C_s^{t_0}$, we have the following claim.

{\em Claim:} there exist some $k$ with $x_{k;s}\in \supp(uv)$ such that the $k$-th column of $C_s^{t_0}$ is non-positive.
Otherwise, we know that $C_s^{t_0}$ is a non-negative matrix. Then by Proposition \ref{prop:C-positive}, we have $[{\bf x}_s]=[{\bf x}_{t_0}]$. Thus $\supp(uv)\subseteq [{\bf x}_s]=[{\bf x}_{t_0}]$. This contradicts that $u$ and $v$ are cluster monomials in non-initial cluster variables. This finishes the proof of the claim.

Now we prove $u=v$ by reducing the number $|\supp (uv)|$ of cluster variables in $\supp(uv)$.  By the claim, we can take an integer $k$
with $x_{k;s}\in\supp(uv)$ such that $k$-th column of $C_s^{t_0}$ is non-positive.
Since $\mathsf P(F_u^{t_0})=\mathsf P(F_{v}^{t_0})$ and by Lemma \ref{lem:ak=bk}, we know that $a_k=b_k$. 
Then by Lemma \ref{lem:v-v1}, we have
\[ \mathsf P(F_{u'}^{t_0})=\mathsf P(F_{v'}^{t_0}),\]
where $u'=u/x_{k;s}^{a_k}=\prod_{i\neq k}x_{i;s}^{a_i}$ and $v'=v/x_{k;s}^{b_k}=\prod_{i\neq k}x_{i;s}^{b_i}$.  Notice that we have 
\[ u=u'\cdot x_{k;s}^{a_k},\;\;\;v=v'\cdot x_{k;s}^{b_k}=v'\cdot x_{k;s}^{a_k}\;\;\;\text{and}\;\;\; \supp(u'v')\subsetneq \supp(uv).\]
Thus to show $u=v$, it suffices to show $u'=v'$. Now $u'$ and $v'$ are two cluster monomials in non-initial cluster variables satisfying $\mathsf P(F_{u'}^{t_0})=\mathsf P(F_{v'}^{t_0})$ and $|\supp(u'v')|< |\supp(uv)|$. Thus, by repeatedly applying the reduction argument, we complete the proof.
\end{proof}

\section{$F$-invariant and Newton polytopes in $\tau$-tilting theory}

\subsection{$F$-invariant of decorated modules} \label{sec:F-tau-tilting}

Recall that for two modules $M,N\in\mod A$, we denote by $\hom_A(M,N)=\dim_{\mathsf k}\Hom_A(M,N)$.

\begin{definition}[$E$-invariant and partial $E$-invariant, \cite{DWZ10,DK-2015,air_2014}]
    Let $\mathcal M=(M,P)$ and $\mathcal N=(N,Q)$ be two decorated $A$-modules. The {\em partial $E$-invariant} $E^{\rm proj}(\mathcal M,\mathcal N)$ and the {\em $E$-invariant} $E^{\rm sym}(\mathcal M,\mathcal N)$ of the ordered pair $(\mathcal M,\mathcal N)$ are defined as follows:
\begin{eqnarray}
   E^{\rm proj}(\mathcal M,\mathcal N)&:=&\hom_A(N,\tau M)+\hom_A(P,N),\\
  E^{\rm sym}(\mathcal M,\mathcal N)&:=& E^{\rm proj}(\mathcal M,\mathcal N)+ E^{\rm proj}(\mathcal N,\mathcal M)\\
  &=&\hom_A(N,\tau M)+\hom_A(P,N)+\hom_A(M,\tau N)+\hom_A(Q,M).\nonumber
\end{eqnarray}

\end{definition}

\begin{definition}[$F$-invariant and partial $F$-invariant]
   Let $\mathcal M=(M,P)$ and $\mathcal N=(N,Q)$ be two decorated $A$-modules.  
   Let $F_{\mathcal M}=F_M=\sum_{{\bf v}\in\mathbb N^n}c_{\bf v}{\bf y}^{\bf v}\in\mathbb Z[y_1,\ldots,y_n]$ be the $F$-polynomial of $\mathcal M$ and
 ${\bf g}_{\mathcal N}\in\mathbb Z^n$ the $g$-vector of $\mathcal N$. The {\em partial $F$-invariant} $F_{\mathcal M}[{\bf g}_{\mathcal N}]$ and the {\em $F$-invariant} $(\mathcal M\mid\mid \mathcal N)_F$ of the ordered pair $(\mathcal M,\mathcal N)$ are defined as follows:
    \begin{eqnarray}
   F_{\mathcal M}[{\bf g}_{\mathcal N}]&:=&\max\{\langle {\bf v},{\bf g}_{\mathcal N}\rangle\mid c_{\bf v}\neq 0\}=\max\{\langle {\bf v},{\bf g}_{\mathcal N}\rangle\mid {\bf v}\in \mathsf P(M)\},\\
   (\mathcal M\mid\mid \mathcal N)_F&:=& F_{\mathcal M}[{\bf g}_{\mathcal N}]+ F_{\mathcal N}[{\bf g}_{\mathcal M}].
\end{eqnarray}
\end{definition}

For a module $X\in\mod A$, we denote by $\underline{\dim} X\in\mathbb N^n$ the dimension vector of $X$.

\begin{lemma}[{\cite[Prop. 2.4]{air_2014}}, {\cite[Thm. 1.4]{AR-1985}}] \label{lem:g-MX}
 Let $M$ and $X$ be two modules in $\mod A$. Then\footnote{Note that $g$-vectors of modules in this paper are the negative of the $g$-vectors used in \cite{air_2014}.} 
 \[ \langle {\bf g}_M, \underline{\dim} X\rangle=\hom_A(X,\tau M)-\hom_A(M, X).
 \]
\end{lemma}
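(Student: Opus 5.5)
Let $P^M_1\xrightarrow{f}P^M_0\to M\to 0$ be the minimal projective presentation of $M$. By definition ${\bf g}_M=\underline{\dim}$-pairing with $[P^M_1]-[P^M_0]$ in the sense that, for each indecomposable projective $P_i$, $\hom_A(P_i,X)=(\underline{\dim}X)_i$. Hence
\[
\langle {\bf g}_M,\underline{\dim}X\rangle=\hom_A(P^M_1,X)-\hom_A(P^M_0,X).
\]
So the task is to compute $\hom_A(P^M_1,X)-\hom_A(P^M_0,X)$ in terms of $M$, $X$ and $\tau M$.

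Apply $\Hom_A(-,X)$ to the presentation. By left exactness this gives an exact sequence
\[
0\to\Hom_A(M,X)\to\Hom_A(P^M_0,X)\xrightarrow{f^*}\Hom_A(P^M_1,X)\to \operatorname{Coker}(f^*)\to 0 .
\]
Taking dimensions yields
\[
\hom_A(P^M_1,X)-\hom_A(P^M_0,X)=\dim\operatorname{Coker}(f^*)-\hom_A(M,X).
\]
It therefore remains to identify $\dim\operatorname{Coker}(f^*)$ with $\hom_A(X,\tau M)$.

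This identification is the main (and only nontrivial) step. I would use the construction of $\tau$ via the Nakayama functor $\nu=D\Hom_A(-,A)$. Because the presentation is minimal, the sequence
\[
0\to\tau M\to\nu P^M_1\xrightarrow{\nu f}\nu P^M_0
\]
is exact. Applying $\Hom_A(X,-)$ and the natural isomorphism $\Hom_A(X,\nu P)\cong D\Hom_A(P,X)$ gives
\[
\Hom_A(X,\tau M)=\ker\bigl(\Hom_A(X,\nu P^M_1)\to\Hom_A(X,\nu P^M_0)\bigr)\cong\ker\bigl(D\Hom_A(P^M_1,X)\xrightarrow{D f^*}D\Hom_A(P^M_0,X)\bigr).
\]
The last kernel is $D\operatorname{Coker}(f^*)$. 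Hence $\dim\operatorname{Coker}(f^*)=\hom_A(X,\tau M)$.

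This is exactly the Auslander–Reiten formula in the form of \cite[Thm. 1.4]{AR-1985}, and one could simply cite it. The care needed here is the naturality of the isomorphism $\Hom(X,\nu P)\cong D\Hom(P,X)$ in $P$, and the sign convention: the paper's ${\bf g}_M$ is the negative of the one in \cite{air_2014}, which produces the stated signs. Combining the two displays gives
\[
\langle {\bf g}_M,\underline{\dim}X\rangle=\hom_A(X,\tau M)-\hom_A(M,X).
\]
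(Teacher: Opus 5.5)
Your proof is correct. It is the standard argument behind the results the paper cites (the paper gives no proof of its own): minimality of the presentation gives $\tau M=\ker(\nu f)$, and together with $\Hom_A(X,\nu P)\cong D\Hom_A(P,X)$ this produces the exact sequence $0\to\Hom_A(M,X)\to\Hom_A(P_0^M,X)\to\Hom_A(P_1^M,X)\to D\Hom_A(X,\tau M)\to 0$ of Adachi--Iyama--Reiten, Prop.~2.4; combining it with $\hom_A(P_i,X)=(\underline{\dim}X)_i$ gives the formula with the paper's sign convention.
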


\begin{lemma}\label{lem:N0-Nf}
Let $M\in\mod A$ be a $\tau$-rigid module and let \begin{eqnarray}\label{eqn:tf-seq}
    \xymatrix{0\ar[r]&N_t\ar[r]& N\ar[r]&N_f\ar[r]&0,}
\end{eqnarray}
be the canonical sequence of a module $N$ with respect to the torsion pair  $(\Fac M, M^\bot)$. Then  for any quotient module $N_0$ of $N$, we have 
\[
\langle {\bf g}_M, \underline{\dim} N_0\rangle\leq \langle{\bf g}_M, \underline{\dim} N_f\rangle=
 \hom(N,\tau M).
\]
\end{lemma}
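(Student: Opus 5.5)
The plan is to apply Lemma \ref{lem:g-MX} to the two modules $N_0$ and $N_f$ and compare the two resulting expressions term by term. First, for any module $X$, Lemma \ref{lem:g-MX} gives
\[\langle {\bf g}_M,\underline{\dim} X\rangle=\hom_A(X,\tau M)-\hom_A(M,X).\]

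For the equality, take $X=N_f$. Since $N_f\in M^\bot$, we have $\hom_A(M,N_f)=0$, so $\langle{\bf g}_M,\underline{\dim} N_f\rangle=\hom_A(N_f,\tau M)$. It remains to show $\hom_A(N_f,\tau M)=\hom_A(N,\tau M)$. Applying $\Hom_A(-,\tau M)$ to the canonical sequence \eqref{eqn:tf-seq} gives the left exact sequence
\[0\to\Hom_A(N_f,\tau M)\to\Hom_A(N,\tau M)\to\Hom_A(N_t,\tau M).\]
Because $M$ is $\tau$-rigid, $\Fac M\subseteq \prescript{\bot}{}{(\tau M)}$ (Theorem \ref{thm-air-mutation}(i) with $Q=0$). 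Since $N_t\in\Fac M$, the last term vanishes, which yields the isomorphism $\Hom_A(N_f,\tau M)\cong\Hom_A(N,\tau M)$.

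For the inequality, let $N_0$ be any quotient of $N$, say via a surjection $N\twoheadrightarrow N_0$. Since $\Hom_A(-,\tau M)$ is left exact, $\Hom_A(N_0,\tau M)\hookrightarrow\Hom_A(N,\tau M)$, so $\hom_A(N_0,\tau M)\le \hom_A(N,\tau M)$. The term $\hom_A(M,N_0)$ is nonnegative. Therefore
\[\langle{\bf g}_M,\underline{\dim} N_0\rangle\le \hom_A(N_0,\tau M)\le\hom_A(N,\tau M)=\langle{\bf g}_M,\underline{\dim}N_f\rangle.\]

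No step is a real obstacle. The only input beyond Lemma \ref{lem:g-MX} and left exactness is the vanishing $\Hom_A(N_t,\tau M)=0$. This follows from $\tau$-rigidity: every object of $\Fac M$ is a quotient of some $M^r$, and $\Hom_A(M^r,\tau M)=0$ injects into nothing larger after composing with the surjection, so $\Hom_A(N_t,\tau M)=0$. This can also be seen directly without citing Theorem \ref{thm-air-mutation}. Note also that $N_f$ is itself a quotient of $N$, so the bound is attained, as claimed.
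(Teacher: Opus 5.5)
Your proof is correct and follows the paper's argument essentially step for step: you apply Lemma \ref{lem:g-MX} to $N_f$ and $N_0$, obtain $\Hom_A(N_f,\tau M)\cong\Hom_A(N,\tau M)$ from $N_t\in\Fac M\subseteq\prescript{\bot}{}{(\tau M)}$, and use left exactness of $\Hom_A(-,\tau M)$ for the quotient $N_0$. Your closing justification of $\Hom_A(N_t,\tau M)=0$ is worded confusingly; it should say that a surjection $M^r\twoheadrightarrow N_t$ induces an injection $\Hom_A(N_t,\tau M)\hookrightarrow\Hom_A(M^r,\tau M)=0$.
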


\begin{proof}
 We know that $N_t\in\Fac M$ and $N_f\in M^{\bot}$.  Since $N_f\in M^{\bot}$ and by Lemma \ref{lem:g-MX}, we have
\[\langle {\bf g}_M, \underline{\dim} N_f\rangle=\hom_A(N_f,\tau M)-\hom_A(M,N_f)=\hom_A(N_f,\tau M).\]
Since $N_t\in \Fac M\subseteq \prescript{\bot}{}(\tau M)$ and by applying the functor $\Hom_A(-,\tau M)$ to \eqref{eqn:tf-seq}, we get $\Hom_A(N_f,\tau M)\cong \Hom_A(N,\tau M)$. Hence, we have 
\[\langle {\bf g}_M,\underline{\dim} N_f\rangle=\hom_A(N_f,\tau M)=\hom_A(N,\tau M).\]

Suppose that $N_0$ is a quotient module of $N$, i.e., we have an exact sequence $N\to N_0\to 0$. By applying the functor $\Hom_A(-,\tau M)$, we see that 
\[ \hom(N_0,\tau M)\leq \hom(N,\tau M).\]
By Lemma \ref{lem:g-MX}, we have
\begin{eqnarray*}\label{eqn:N0-Nf}
    \langle {\bf g}_M, \underline{\dim} N_0\rangle&=&\hom_A(N_0,\tau M)-\hom_A(M,N_0)\\
    &\leq& \hom_A(N_0,\tau M) \\
    &\leq&  \hom_A(N,\tau M)=\langle{\bf g}_M, \underline{\dim} N_f\rangle.
\end{eqnarray*}
This completes the proof.
\end{proof}

\begin{lemma}\label{lem:N0-Nt}
Let $M\in\mod A$ be a $\tau$-rigid module and  let \begin{eqnarray}\label{eqn:tf-seq-1}
\xymatrix{0\ar[r]&N_t\ar[r]& N\ar[r]&N_f\ar[r]&0,}
\end{eqnarray}
be the canonical sequence of a module $N$ with respect to the torsion pair  $(\prescript{\bot}{}{(\tau M)}, \Sub \tau M)$. Then for any submodule $N_0$ of $N$, we have 
\[
\langle -{\bf g}_M, \underline{\dim} N_0\rangle\leq \langle -{\bf g}_M, \underline{\dim} N_t\rangle=
 \hom(M, N).
\]
\end{lemma}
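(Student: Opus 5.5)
The plan mirrors the proof of Lemma \ref{lem:N0-Nf}, with submodules in place of quotients and the torsion pair $(\prescript{\bot}{}{(\tau M)}, \Sub \tau M)$ in place of $(\Fac M, M^\bot)$. Since $M$ is $\tau$-rigid, $\prescript{\bot}{}{(\tau M)}$ is a torsion class by Proposition \ref{pro-minmax}(i), with torsion-free class $\Sub\tau M$ (standard, \cite{air_2014}). Moreover $M\in\prescript{\bot}{}{(\tau M)}$, because $\Hom_A(M,\tau M)=0$. The key tool throughout is Lemma \ref{lem:g-MX}, rewritten as
\[\langle -{\bf g}_M,\underline{\dim} X\rangle=\hom_A(M,X)-\hom_A(X,\tau M).\]

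First I will compute the value at $N_t$. Since $N_t\in\prescript{\bot}{}{(\tau M)}$, we have $\hom_A(N_t,\tau M)=0$, so $\langle -{\bf g}_M,\underline{\dim} N_t\rangle=\hom_A(M,N_t)$. Next I apply $\Hom_A(M,-)$ to the canonical sequence, which gives
\[0\to\Hom_A(M,N_t)\to\Hom_A(M,N)\to\Hom_A(M,N_f).\]
It then suffices to show $\Hom_A(M,N_f)=0$. This holds because $N_f\in\Sub\tau M=(\prescript{\bot}{}{(\tau M)})^\bot$ and $M\in\prescript{\bot}{}{(\tau M)}$. Hence $\hom_A(M,N_t)=\hom_A(M,N)$, which gives the equality in the statement.

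For the inequality, let $N_0\subseteq N$. Applying $\Hom_A(M,-)$ to the monomorphism $N_0\hookrightarrow N$ gives an injection $\Hom_A(M,N_0)\hookrightarrow\Hom_A(M,N)$, so $\hom_A(M,N_0)\le\hom_A(M,N)$. Combining this with the rewritten Lemma \ref{lem:g-MX},
\[\langle -{\bf g}_M,\underline{\dim} N_0\rangle=\hom_A(M,N_0)-\hom_A(N_0,\tau M)\le\hom_A(M,N_0)\le \hom_A(M,N)=\langle -{\bf g}_M,\underline{\dim} N_t\rangle.\]

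The only step that is not routine is identifying the torsion-free class of $\prescript{\bot}{}{(\tau M)}$ as $\Sub\tau M$. If I want to avoid citing this, the argument only needs $\Hom_A(M,N_f)=0$, and this follows from the definition of a torsion pair alone: $N_f$ lies in the torsion-free class $(\prescript{\bot}{}{(\tau M)})^\bot$, and $M$ lies in the torsion class $\prescript{\bot}{}{(\tau M)}$.
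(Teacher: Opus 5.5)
Your proof is correct and follows essentially the same route as the paper. You evaluate Lemma~\ref{lem:g-MX} at $N_t$, use $\Hom_A(M,N_f)=0$ to get $\Hom_A(M,N_t)\cong\Hom_A(M,N)$, and bound $\langle -{\bf g}_M,\underline{\dim} N_0\rangle\le\hom_A(M,N_0)\le\hom_A(M,N)$; the only cosmetic difference is that the paper gets $\Hom_A(M,N_f)=0$ from $\Hom_A(M,\tau M)=0$ and $N_f\in\Sub\tau M$, whereas you get it from $M$ lying in the torsion class and $N_f$ in the torsion-free class, which is the same fact.
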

\begin{proof}
     We know that $N_t\in\prescript{\bot}{}{(\tau M)}$ and $N_f\in \Sub \tau M$.  Since $N_t\in \prescript{\bot}{}{(\tau M)}$ and by Lemma \ref{lem:g-MX}, we have
\[\langle {-\bf g}_M, \underline{\dim} N_t\rangle=-\hom_A(N_t,\tau M)+\hom_A(M,N_t)=\hom_A(M,N_t).\]
Since $\Hom_A(M,\tau M)=0$, $N_f\in \Sub \tau M$ and by applying the functor $\Hom_A(M,-)$ to \eqref{eqn:tf-seq-1}, we get $\Hom_A(M,N_t)\cong \Hom_A(M,N)$. Hence, we have 
\[\langle -{\bf g}_M,\underline{\dim} N_t\rangle=\hom_A(M,N_t)=\hom_A(M,N).\]

Suppose that $N_0$ is a submodule of $N$, i.e., we have an exact sequence $0\to N_0\to N$. By applying the functor $\Hom_A(M,-)$, we see that 
\[ \hom(M,N_0)\leq \hom(M,N).\]
By Lemma \ref{lem:g-MX}, we have
\begin{eqnarray*}\label{eqn:N0-Nf}
    \langle -{\bf g}_M, \underline{\dim} N_0\rangle&=&-\hom_A(N_0,\tau M)+\hom_A(M,N_0)\\
    &\leq& \hom_A(M, N_0) \\
    &\leq&  \hom_A(M, N)=\langle- {\bf g}_M, \underline{\dim} N_t\rangle.
\end{eqnarray*}
This completes the proof.
\end{proof}

\begin{proposition}\label{pro:check-F}
    Let $M\in\mod A$ be a $\tau$-rigid module. Then for any module $N\in\mod A$, we have 
    \[ F_N[{\bf g}_M]=\hom_A(N,\tau M)\quad\text{and}\quad \check F_N[-{\bf g}_M]=\hom_A(M,N),
    \]
    where $\check{F}_N$ is the dual $F$-polynomial of $N$. 
\end{proposition}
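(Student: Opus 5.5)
The plan is to read both identities directly off Lemmas \ref{lem:N0-Nf} and \ref{lem:N0-Nt}, using the description of the tropical polynomial as a maximum over the support of the polynomial. The key input is that a tropical polynomial depends only on the vertices of the Newton polytope (Remark \ref{rmk:cons-1}(a)). By Proposition \ref{lem:Fei-vertex}, $\mathsf P(F_N)=\mathsf P(N)$, the convex hull of dimension vectors of quotient modules of $N$. Hence
\[F_N[{\bf g}_M]=h_{\mathsf P(N)}({\bf g}_M)=\max\{\langle {\bf g}_M,\underline{\dim} N_0\rangle\mid N_0 \text{ a quotient of } N\},\]
since a linear functional attains its maximum over a convex hull at one of the generating points.

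For the first identity, take the canonical sequence $0\to N_t\to N\to N_f\to 0$ of $N$ with respect to the torsion pair $(\Fac M, M^\bot)$. This is a torsion pair because $\Fac M$ is a torsion class, by Proposition \ref{pro-minmax}(i). Lemma \ref{lem:N0-Nf} says every quotient $N_0$ satisfies $\langle{\bf g}_M,\underline{\dim}N_0\rangle\le \langle{\bf g}_M,\underline{\dim}N_f\rangle=\hom_A(N,\tau M)$. Since $N_f$ is itself a quotient of $N$, the maximum is attained there, which gives $F_N[{\bf g}_M]=\hom_A(N,\tau M)$.

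For the dual statement we need the analogue of Fei's result for $\check F_N$: the Newton polytope of $\check F_N$ is the convex hull of dimension vectors of submodules of $N$. One way to get it is to apply Proposition \ref{lem:Fei-vertex} to the dual module $DN$ over $A^{op}$. Quotients of $DN$ correspond to duals of submodules of $N$ with the same dimension vectors, and the Grassmannians are identified, so $\check F_N=F_{DN}$.

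With this in hand, $\check F_N[-{\bf g}_M]=\max\{\langle -{\bf g}_M,\underline{\dim}N_0\rangle\mid N_0\subseteq N\}$. Now take the canonical sequence of $N$ for the torsion pair $(\prescript{\bot}{}{(\tau M)},\Sub\tau M)$. This is a torsion pair by Proposition \ref{pro-minmax}(i) with $Q=0$, its torsion-free class being $\Sub\tau M$ (standard, AIR/Auslander–Smalø). Lemma \ref{lem:N0-Nt} bounds every submodule by $N_t$. Since $N_t$ is a submodule of $N$, the maximum equals $\hom_A(M,N)$.

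The only step that is not immediate is the dual Fei statement: that nonzero coefficients of $\check F_N$ occur exactly at dimension vectors of submodules, up to taking the convex hull. I would handle it through the duality $D$ as above, which reduces it to Proposition \ref{lem:Fei-vertex} for $A^{op}$. Everything else is a direct combination of the two preceding lemmas.
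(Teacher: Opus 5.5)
Your proposal is correct and follows the paper's route: the paper's one-line proof likewise reads both identities off Lemmas \ref{lem:N0-Nf} and \ref{lem:N0-Nt}, tacitly using that $F_N[-]$ (resp.\ $\check F_N[-]$) is the maximum of the pairing over dimension vectors of quotients (resp.\ submodules) of $N$, which is what you make explicit via Proposition \ref{lem:Fei-vertex}. Your reduction $\check F_N=F_{DN}$ over $A^{op}$ is a sound way to justify the dual of Fei's theorem, a step the paper leaves unstated.
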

\begin{proof}
    This follows from Lemma \ref{lem:N0-Nf} and Lemma \ref{lem:N0-Nt}.
\end{proof}

\begin{theorem}
\label{thm:FE-hom}
Let $\mathcal M=(M,P)$ be a $\tau$-rigid pair in $\mod A$. Then for any decorated $A$-module $\mathcal N=(N,Q)$, we have 
 \[F_{\mathcal N}[{\bf g}_{\mathcal M}]=E^{\rm proj}(\mathcal M,\mathcal N)=\hom_A(N,\tau M)+\hom_A(P,N).\] 
\end{theorem}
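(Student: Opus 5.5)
We need $F_N[{\bf g}_M-{\bf g}_P]=\hom_A(N,\tau M)+\hom_A(P,N)$. The plan is to reduce this to Proposition \ref{pro:check-F} by adapting the proof of Lemma \ref{lem:N0-Nf} to include the projective $P$. By definition $F_{\mathcal N}=F_N$ and ${\bf g}_{\mathcal M}={\bf g}_M-{\bf g}_P$. By Proposition \ref{lem:Fei-vertex}, $F_N[{\bf r}]=\max\{\langle \underline{\dim} N_0,{\bf r}\rangle\}$, where $N_0$ ranges over the quotient modules of $N$, since the maximum of a linear functional over a polytope is attained at a vertex. So it suffices to bound $\langle {\bf g}_M-{\bf g}_P,\underline{\dim}N_0\rangle$ from above and to exhibit a quotient attaining the bound.

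For the upper bound, note that ${\bf g}_P=-\underline{\dim}$-type data: for projective $P$ we have $\tau P=0$, so Lemma \ref{lem:g-MX} gives $\langle {\bf g}_P,\underline{\dim}X\rangle=-\hom_A(P,X)$. Hence, for any quotient $N_0$ of $N$,
\[\langle {\bf g}_{\mathcal M},\underline{\dim}N_0\rangle=\hom_A(N_0,\tau M)-\hom_A(M,N_0)+\hom_A(P,N_0)\le \hom_A(N,\tau M)+\hom_A(P,N).\]
This uses $\hom_A(N_0,\tau M)\le \hom_A(N,\tau M)$, which holds because $N\to N_0$ is surjective, and $\hom_A(P,N_0)\le\hom_A(P,N)$, which holds by exactness of $\Hom_A(P,-)$.

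For attainment, I would use the torsion pair $(\Fac M\ast\Fac P,\ \cdot)$, or more simply the torsion class $\mathcal T$ generated by $M\oplus P$. The natural candidate is $\mathcal T=\Fac(M\oplus P)$, with torsion-free class $(M\oplus P)^\bot=M^\bot\cap P^\bot$. The worry is that $\Fac(M\oplus P)$ need not be closed under extensions; since $P$ is projective, however, I would instead take the canonical sequence of $N$ with respect to the smallest torsion class containing $M$ and $P$, which is $\mathsf T(M\oplus P)$ with $\mathcal F=M^\bot\cap P^\bot$, and let $N_f$ be the torsion-free quotient. Then $\hom(M,N_f)=0=\hom(P,N_f)$, and this is the delicate part. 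Setting $N_0=N_f$, we get $\langle{\bf g}_{\mathcal M},\underline{\dim}N_f\rangle=\hom(N_f,\tau M)+0$, which does not recover the $\hom(P,N)$ term.

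So the correct choice must instead keep $\hom(P,N_0)=\hom(P,N)$. I therefore take the quotient $N_f$ from Lemma \ref{lem:N0-Nf}, with respect to $(\Fac M, M^\bot)$, and need $\hom(P,N_t)=0$, i.e. $\Hom_A(P,\Fac M)=0$. This holds because $\Hom(P,M)=0$ and $P$ is projective. Then $\hom(P,N_f)=\hom(P,N)$ by exactness, $\hom(M,N_f)=0$, and $\hom(N_f,\tau M)=\hom(N,\tau M)$ as in Lemma \ref{lem:N0-Nf}. The bound is therefore attained, which proves the theorem. The main obstacle is identifying this attaining quotient; the vanishing $\Hom(P,\Fac M)=0$ is the key observation.
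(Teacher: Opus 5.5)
Your proof is correct and is essentially the paper's argument. The upper bound comes from $\langle{\bf g}_P,\underline{\dim}N_0\rangle=-\hom_A(P,N_0)\le 0$ combined with the estimate of Lemma \ref{lem:N0-Nf}, and attainment is at the torsion-free quotient $N_f$ for $(\Fac M, M^\bot)$, via $\Hom_A(P,\Fac M)=0$. The detour through the torsion class generated by $M\oplus P$ is not needed and can simply be deleted.
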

\begin{proof}
Since $F_{\mathcal N}=F_N$, it suffices to show that $F_{N}[{\bf g}_{\mathcal M}]=\hom_A(N,\tau M)+\hom_A(P,N)$. For any quotient module $N_0$ of $N$, we have $\hom_A(P,N_0)\leq \hom_A(P,N)$. By Lemma \ref{lem:g-MX}, we have $\langle {\bf g}_{P},\underline{\dim} N_0\rangle=\hom_A(N_0,\tau P)-\hom_A(P, N_0)=-\hom_A(P, N_0)$.
Thus
\begin{eqnarray*}
    \langle {\bf g}_{\mathcal M},\underline{\dim} N_0\rangle&=& \langle {\bf g}_{M},\underline{\dim} N_0\rangle-\langle {\bf g}_{P},\underline{\dim} N_0\rangle \quad\;\;(\text{by\;} {\bf g}_{\mathcal M}={\bf g}_M-{\bf g}_{P})\\
    &=&\langle {\bf g}_{M},\underline{\dim} N_0\rangle+ \hom_A(P,N_0)\\
    &\leq&\langle {\bf g}_{M},\underline{\dim} N_0\rangle+\hom_A(P,N)\\
    &\leq& \hom_A(N,\tau M)+\hom_A(P,N) \quad\;\text{(by Lemma \ref{lem:N0-Nf}).}
\end{eqnarray*}
Since $N_0$ is an arbitrary quotient module of $N$, we have \[F_N[{\bf g}_{\mathcal M}]\leq \hom_A(N,\tau M)+\hom_A(P,N).\]

Now let us show the converse inequality. Consider the canonical sequence of $N$ with respect to the torsion pair  $(\Fac M, M^\bot)$:
\begin{eqnarray}\label{eqn:tf-seq-b}
\xymatrix{0\ar[r]&N_t\ar[r]& N\ar[r]&N_f\ar[r]&0,}
\end{eqnarray}
where $N_t\in\Fac M$ and $N_f\in M^\bot$. 

Since $\mathcal M=(M,P)$ is a $\tau$-rigid pair, we have $\Hom_A(P,M)=0$ and thus $\Hom_A(P, N_t)=0$. Applying the functor $\Hom_A(P,-)$ to \eqref{eqn:tf-seq-b}, we see $\Hom_A(P,N)\cong \Hom_A(P, N_f)$. By Lemma \ref{lem:g-MX}, we have
\begin{eqnarray*}
    \langle {\bf g}_P, \underline{\dim} N_f\rangle&=&\hom_A(N_f,\tau P)-\hom_A(P, N_f)\\
    &=&-\hom_A(P, N_f)=-\hom_A(P, N).
\end{eqnarray*}
By Lemma \ref{lem:N0-Nf}, we know that $ \langle {\bf g}_{M},\underline{\dim} N_f\rangle=\hom_A(N,\tau M)$. Thus
\[
 \langle {\bf g}_{\mathcal M},\underline{\dim} N_f\rangle= \langle {\bf g}_{M},\underline{\dim} N_f\rangle- \langle {\bf g}_{P},\underline{\dim} N_f\rangle=\hom_A(N,\tau M)+\hom_A(P, N).
\]
Since $N_f$ is a quotient module of $N$, we have $\langle {\bf g}_{\mathcal M},\underline{\dim} N_f\rangle\leq F_N[{\bf g}_{\mathcal M}]$. Thus
\[
\hom_A(N,\tau M)+\hom_A(P, N)=\langle {\bf g}_{\mathcal M},\underline{\dim} N_f\rangle\leq F_N[{\bf g}_{\mathcal M}].
\]
Hence, we have $F_{\mathcal N}[{\bf g}_{\mathcal M}]=F_N[{\bf g}_{\mathcal M}]=\hom_A(N,\tau M)+\hom_A(P,N)=E^{\rm proj}(\mathcal M,\mathcal N)$.
\end{proof}

\begin{remark}
  We remark that the above theorem can be deduced from \cite[Theorem 3.6]{fei_2019b}. However, the proofs here and that in \cite{fei_2019b} are quite different.
\end{remark}

\begin{corollary}\label{cor:additive}
Let $\mathcal M=\mathcal M'\oplus \mathcal M''$ be a $\tau$-rigid pair. Then for any decorated $A$-module $\mathcal N=\mathcal N'\oplus \mathcal N''$, we have
\[ F_{\mathcal N}[{\bf g}_{\mathcal M}]=F_{\mathcal N}[{\bf g}_{\mathcal M'}]+F_{\mathcal N}[{\bf g}_{\mathcal M''}]\;\;\;\text{and}\;\;\;F_{\mathcal N}[{\bf g}_{\mathcal M}]=F_{\mathcal N'}[{\bf g}_{\mathcal M}]+F_{\mathcal N''}[{\bf g}_{\mathcal M}].
\]
\end{corollary}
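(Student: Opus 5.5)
The plan is to derive Corollary \ref{cor:additive} directly from Theorem \ref{thm:FE-hom}, which gives $F_{\mathcal N}[{\bf g}_{\mathcal M}]$ as a sum of $\hom$-dimensions. Such a sum is additive in each argument.

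Write $\mathcal M'=(M',P')$ and $\mathcal M''=(M'',P'')$, so that $\mathcal M=(M'\oplus M'',P'\oplus P'')$. Since $\mathcal M$ is a $\tau$-rigid pair, both summands are $\tau$-rigid pairs. Indeed, $\Hom_A(M',\tau M')$ is a direct summand of $\Hom_A(M,\tau M)=0$, because $\tau$ commutes with finite direct sums. Likewise, $\Hom_A(P',M')$ is a direct summand of $\Hom_A(P,M)=0$, and the same holds for $\mathcal M''$. This check is needed so that Theorem \ref{thm:FE-hom} applies to $\mathcal M'$ and $\mathcal M''$ individually, and it is the only point requiring any care.

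\emph{First identity.} By Theorem \ref{thm:FE-hom} applied three times, to $\mathcal M$, $\mathcal M'$ and $\mathcal M''$ (all $\tau$-rigid pairs), we have
\[F_{\mathcal N}[{\bf g}_{\mathcal M}]=\hom_A(N,\tau M'\oplus\tau M'')+\hom_A(P'\oplus P'',N).\]
Because $\hom_A$ is additive in each variable, the right-hand side splits as
\[\bigl(\hom_A(N,\tau M')+\hom_A(P',N)\bigr)+\bigl(\hom_A(N,\tau M'')+\hom_A(P'',N)\bigr),\]
which equals $F_{\mathcal N}[{\bf g}_{\mathcal M'}]+F_{\mathcal N}[{\bf g}_{\mathcal M''}]$.

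\emph{Second identity.} Write $\mathcal N'=(N',Q')$ and $\mathcal N''=(N'',Q'')$, so $N=N'\oplus N''$. Theorem \ref{thm:FE-hom} places no hypothesis on the second argument, so it applies to $\mathcal N$, $\mathcal N'$ and $\mathcal N''$ with the same $\tau$-rigid pair $\mathcal M$. Additivity of $\hom_A(-,\tau M)$ and $\hom_A(P,-)$ then gives $F_{\mathcal N}[{\bf g}_{\mathcal M}]=F_{\mathcal N'}[{\bf g}_{\mathcal M}]+F_{\mathcal N''}[{\bf g}_{\mathcal M}]$.

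As an alternative check for the second identity, we could use $F_{N'\oplus N''}=F_{N'}F_{N''}$ (Proposition \ref{pro:DWZ-FmFn}) together with Corollary \ref{cor-F-prod}. That route needs no rigidity at all, but the $\hom$ argument is uniform and suffices.
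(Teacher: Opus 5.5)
Your proof is correct and matches the paper's: both derive the two identities from Theorem \ref{thm:FE-hom} together with the additivity of $E^{\rm proj}(\mathcal M,\mathcal N)=\hom_A(N,\tau M)+\hom_A(P,N)$ in each argument. You also verify explicitly that $\mathcal M'$ and $\mathcal M''$ are $\tau$-rigid, which the theorem requires and the paper leaves implicit, and your remark is correct that the second identity holds without any rigidity, by Proposition \ref{pro:DWZ-FmFn} and Corollary \ref{cor-F-prod}.
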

\begin{proof}
It is easy to check that $E^{\rm proj}(\mathcal M, \mathcal N)=E^{\rm proj}(\mathcal M', \mathcal N)+E^{\rm proj}(\mathcal M'', \mathcal N)$ and $E^{\rm proj}(\mathcal M, \mathcal N)=E^{\rm proj}(\mathcal M, \mathcal N')+E^{\rm proj}(\mathcal M, \mathcal N'')$. Then the desired results follow from Theorem \ref{thm:FE-hom}.  
\end{proof}

\begin{corollary}\label{cor-F=E}
Let $\mathcal M=(M,P)$ and $\mathcal N=(N,Q)$ be two $\tau$-rigid pairs in  $\mod A$. Then the following statements hold.
\begin{itemize}
    \item [(i)] We have $(\mathcal M\mid\mid \mathcal N)_F=F_{\mathcal M}[{\bf g}_{\mathcal N}]+F_{\mathcal N}[{\bf g}_{\mathcal M}]=E^{\rm sym}(\mathcal M,\mathcal N)$.
    \item[(ii)] The direct sum $\mathcal M\oplus \mathcal N$ is a $\tau$-rigid pair in $\mod A$ if and only if $(\mathcal M\mid\mid \mathcal N)_F=0$.
\end{itemize}
\end{corollary}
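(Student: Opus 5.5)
Part (i) follows by applying Theorem \ref{thm:FE-hom} twice. Since $\mathcal M=(M,P)$ is a $\tau$-rigid pair, taking $\mathcal N$ as the arbitrary decorated module gives
\[F_{\mathcal N}[{\bf g}_{\mathcal M}]=E^{\rm proj}(\mathcal M,\mathcal N)=\hom_A(N,\tau M)+\hom_A(P,N).\]
Since $\mathcal N=(N,Q)$ is also $\tau$-rigid, swapping roles gives
\[F_{\mathcal M}[{\bf g}_{\mathcal N}]=E^{\rm proj}(\mathcal N,\mathcal M)=\hom_A(M,\tau N)+\hom_A(Q,M).\]
Adding the two identities and comparing with the definition of $E^{\rm sym}(\mathcal M,\mathcal N)$ gives $(\mathcal M\mid\mid\mathcal N)_F=E^{\rm sym}(\mathcal M,\mathcal N)$.

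For (ii), I will use (i) and the fact that $E^{\rm sym}(\mathcal M,\mathcal N)$ is a sum of four non-negative integers. It therefore vanishes if and only if all four terms vanish:
\[\Hom_A(N,\tau M)=0,\quad \Hom_A(M,\tau N)=0,\quad \Hom_A(P,N)=0,\quad \Hom_A(Q,M)=0.\]
I then compare this with the definition of $\tau$-rigidity for $\mathcal M\oplus\mathcal N=(M\oplus N,P\oplus Q)$. This pair is $\tau$-rigid exactly when $\Hom_A(M\oplus N,\tau(M\oplus N))=0$ and $\Hom_A(P\oplus Q,M\oplus N)=0$.

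Because $\tau$ commutes with finite direct sums, both Hom spaces split into four summands each. The summands $\Hom_A(M,\tau M)$, $\Hom_A(N,\tau N)$, $\Hom_A(P,M)$ and $\Hom_A(Q,N)$ already vanish, since $\mathcal M$ and $\mathcal N$ are $\tau$-rigid pairs. The remaining summands are exactly the four terms of $E^{\rm sym}(\mathcal M,\mathcal N)$. Hence $\mathcal M\oplus\mathcal N$ is $\tau$-rigid if and only if $E^{\rm sym}(\mathcal M,\mathcal N)=0$, which by (i) is equivalent to $(\mathcal M\mid\mid\mathcal N)_F=0$.

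I do not expect any real obstacle, since all the substance is in Theorem \ref{thm:FE-hom}. The only points needing care are checking that the $\tau$-rigidity hypothesis is available in the direction in which each instance of the theorem is applied, and the additivity of $\tau$ over direct sums.
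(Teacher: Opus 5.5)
Your proposal is correct and follows the paper's proof: part (i) applies Theorem \ref{thm:FE-hom} once in each direction and adds the two identities, and part (ii) uses that $E^{\rm sym}(\mathcal M,\mathcal N)$ is a sum of four non-negative dimensions. Your expansion of $\tau$-rigidity of $(M\oplus N,P\oplus Q)$ into eight Hom summands just spells out the one-line justification the paper gives for (ii).
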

\begin{proof}
 (i) By Theorem \ref{thm:FE-hom}, we have $F_{\mathcal N}[{\bf g}_{\mathcal M}]=E^{\rm proj}(\mathcal M,\mathcal N)$ and $F_{\mathcal M}[{\bf g}_{\mathcal N}]=E^{\rm proj}(\mathcal N, \mathcal M)$. Thus 
 \[(\mathcal M\mid\mid \mathcal N)_F=F_{\mathcal N}[{\bf g}_{\mathcal M}]+F_{\mathcal M}[{\bf g}_{\mathcal N}]=E^{\rm proj}(\mathcal M,\mathcal N)+E^{\rm proj}(\mathcal N,\mathcal M) =E^{\rm sym}(\mathcal M,\mathcal N).\]

 (ii) Since $E^{\rm sym}(\mathcal M,\mathcal N)=\hom_A(M,\tau N)+\hom_A(Q,M)+\hom_A(N,\tau M)+\hom_A(P,N)$, we know that $\mathcal M\oplus \mathcal N$ is $\tau$-rigid if and only if $E^{\rm sym}(\mathcal M,\mathcal N)=0$. Then the desirted result follows from (i).
\end{proof}

\begin{corollary}\label{cor:dom-hom}
Let $\mathcal M=(M,P)$ be a $\tau$-rigid pair in $\mod A$. Then a module $N\in \mod A$ belongs to   $\prescript{\bot}{}(\tau M)\cap P^{\bot}$ if and only if $F_N[{\bf g}_{\mathcal M}]=0$, that is, $\langle{\bf g}_{\mathcal M},\underline{\dim} N_0\rangle\leq 0$ for any quotient module $N_0$ of $N$.
\end{corollary}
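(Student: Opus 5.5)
The statement is an immediate consequence of Theorem \ref{thm:FE-hom}, which gives $F_N[{\bf g}_{\mathcal M}]=\hom_A(N,\tau M)+\hom_A(P,N)$. The plan is to apply it to the decorated module $\mathcal N=(N,0)$ and then read off the two characterizations.

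\textbf{The equivalence with the torsion class.} Since $F_{(N,0)}=F_N$, Theorem \ref{thm:FE-hom} yields
\[F_N[{\bf g}_{\mathcal M}]=\hom_A(N,\tau M)+\hom_A(P,N).\]
Both summands are non-negative integers. Hence $F_N[{\bf g}_{\mathcal M}]=0$ if and only if $\Hom_A(N,\tau M)=0$ and $\Hom_A(P,N)=0$. This says exactly that $N\in\prescript{\bot}{}(\tau M)\cap P^{\bot}$.

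\textbf{The reformulation via quotients.} By definition,
\[F_N[{\bf g}_{\mathcal M}]=\max\{\langle{\bf v},{\bf g}_{\mathcal M}\rangle\mid c_{\bf v}\neq 0\}.\]
By Proposition \ref{lem:Fei-vertex}, $\mathsf P(F_N)=\mathsf P(N)$, so this maximum equals the maximum of the linear functional $\langle -,{\bf g}_{\mathcal M}\rangle$ over $\mathsf P(N)$. A linear functional attains its maximum over a convex hull at one of the generating points. Therefore
\[F_N[{\bf g}_{\mathcal M}]=\max\{\langle{\bf g}_{\mathcal M},\underline{\dim} N_0\rangle\mid N_0 \text{ a quotient of } N\}.\]

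This reduction can also be done without Fei's result. The proof of Theorem \ref{thm:FE-hom} already shows that $\langle{\bf g}_{\mathcal M},\underline{\dim} N_0\rangle\le \hom_A(N,\tau M)+\hom_A(P,N)$ for every quotient $N_0$. It also shows that equality holds for the quotient $N_f$.

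Since $N_0=0$ is a quotient of $N$, the maximum above is always at least $0$. So the maximum equals $0$ precisely when $\langle{\bf g}_{\mathcal M},\underline{\dim} N_0\rangle\le 0$ for every quotient $N_0$. This gives the second form of the statement.

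The only point needing any care is this identification of the tropical value with the maximum over all quotient dimension vectors. Either route above handles it, so there is no real obstacle.
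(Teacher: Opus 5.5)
Your proposal is correct and follows the paper's proof, which also just reads the equivalence off $F_N[{\bf g}_{\mathcal M}]=\hom_A(N,\tau M)+\hom_A(P,N)$ from Theorem \ref{thm:FE-hom} using non-negativity of both terms. Your extra justification of the ``that is'' clause spells out what the paper builds into its definition of the partial $F$-invariant: Fei's theorem gives $\mathsf P(F_N)=\mathsf P(N)$, and the zero quotient makes the maximum nonnegative.
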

\begin{proof}
By Theorem \ref{thm:FE-hom}, we have $F_N[{\bf g}_{\mathcal M}]=\hom_A(N,\tau M)+\hom_A(P,N)$. Then the result follows.
\end{proof}

\subsection{Newton polytopes of $\tau$-rigid modules}

Recall that we use $\mathsf P(M)$ to denote the Newton polytope of a module $M\in\mod A$.
\begin{lemma}[{\cite[Theorem 3.2]{cao-2025b}}] \label{lem:sum-rigid}
 Let $U$ and $V$ be two $\tau$-rigid modules in $\mod A$. If  $\mathsf P(U)=\mathsf P(V)$, the direct sum $U\oplus V$ remains $\tau$-rigid in $\mod A$.    
\end{lemma}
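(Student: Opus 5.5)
The plan is to mimic the proof of Lemma \ref{lem:uu1} in the cluster setting, now using the module-theoretic $F$-invariant. Regard $U$ and $V$ as $\tau$-rigid pairs $\mathcal U=(U,0)$ and $\mathcal V=(V,0)$. By Corollary \ref{cor-F=E} (ii), $U\oplus V$ is $\tau$-rigid if and only if
\[
(\mathcal U\mid\mid \mathcal V)_F=F_U[{\bf g}_V]+F_V[{\bf g}_U]=0 .
\]
So it suffices to show that both partial $F$-invariants vanish.

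The first step is self-compatibility. Since $U$ is $\tau$-rigid, $\mathcal U\oplus\mathcal U=(U\oplus U,0)$ is again $\tau$-rigid: indeed $\Hom_A(U\oplus U,\tau(U\oplus U))$ is a direct sum of copies of $\Hom_A(U,\tau U)=0$. Corollary \ref{cor-F=E} (ii) therefore gives $2F_U[{\bf g}_U]=0$, so $F_U[{\bf g}_U]=0$. One can also see this directly from Theorem \ref{thm:FE-hom}, which gives $F_U[{\bf g}_U]=\hom_A(U,\tau U)=0$. In the same way, $F_V[{\bf g}_V]=\hom_A(V,\tau V)=0$.

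The second step uses the hypothesis on polytopes. By Proposition \ref{lem:Fei-vertex} we have $\mathsf P(F_U)=\mathsf P(U)=\mathsf P(V)=\mathsf P(F_V)$. By Remark \ref{rmk:cons-1} (a) (equivalently Proposition \ref{pro:trop-support}), this gives $F_U[{\bf r}]=F_V[{\bf r}]$ for every ${\bf r}\in\mathbb R^n$. In particular,
\[
F_U[{\bf g}_V]=F_V[{\bf g}_V]=0 \quad\text{and}\quad F_V[{\bf g}_U]=F_U[{\bf g}_U]=0 .
\]
Hence $(\mathcal U\mid\mid\mathcal V)_F=0$, and $U\oplus V$ is $\tau$-rigid. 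If one wants to be explicit, Theorem \ref{thm:FE-hom} translates these two vanishings into $\hom_A(V,\tau U)=0$ and $\hom_A(U,\tau V)=0$. Together with $\Hom_A(U,\tau U)=0$ and $\Hom_A(V,\tau V)=0$, these are exactly the four components of $\Hom_A(U\oplus V,\tau(U\oplus V))$.

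There is no real obstacle here, since the heavy lifting was already done in Theorem \ref{thm:FE-hom} and in Fei's identification of the two Newton polytopes. The only point needing care is that Theorem \ref{thm:FE-hom} is applied with the $\tau$-rigid module in the $g$-vector slot. That requirement holds in each of the four evaluations above, because both $U$ and $V$ are $\tau$-rigid.
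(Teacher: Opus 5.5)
Your proof is correct and follows the paper's argument: Theorem \ref{thm:FE-hom} gives $F_U[{\bf g}_U]=\hom_A(U,\tau U)=0$ and $F_V[{\bf g}_V]=0$, the equal Newton polytopes let you swap $F_U$ and $F_V$ in the tropical evaluations, and Corollary \ref{cor-F=E} (ii) then gives the conclusion. Your explicit appeal to Proposition \ref{lem:Fei-vertex} only makes visible a step the paper uses implicitly, via its definition of the partial $F$-invariant as a maximum over $\mathsf P(M)$.
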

\begin{proof}
  This result is first proved in \cite{cao-2025b}. Here we give a different proof.  Since $U$ and $V$ are $\tau$-rigid modules, we know that $E^{\rm proj}(U,U)=\hom_A(U,\tau U)=0$ and $E^{\rm proj}(V,V)=\hom_A(V,\tau V)=0$.
Then by Theorem \ref{thm:FE-hom},
we obtain $F_U[{\bf g}_U]= E^{\rm proj}(U,U)=0$ and $F_V[{\bf g}_V]=E^{\rm proj}(V,V)=0$.

  Since $U$ and $V$ have the same Newton polytope, we know that $F_U[{\bf r}]=F_V[{\bf r}]$ for any ${\bf r}\in\mathbb R^n$. In particular, we have $F_U[{\bf g}_V]=F_V[{\bf g}_V]=0$ and $F_V[{\bf g}_U]=F_U[{\bf g}_U]=0$. Thus 
  \[(U\mid\mid V)_F=F_U[{\bf g}_V]+F_V[{\bf g}_U]=0.\]
By Corollary \ref{cor-F=E} (ii), we know that $U\oplus V$ is a $\tau$-rigid module.
\end{proof}

\begin{lemma}\label{lem:exchange}
 Let $\mathcal M=\oplus_{i=1}^n \mathcal M_i$ be a basic $\tau$-tilting pair in $\mod A$ and $\mathcal M'=\mu_k(\mathcal M)=(\oplus_{i\neq k}\mathcal M_i)\oplus \mathcal M_k'$ a left mutation of $\mathcal M$. Then we have  
 \begin{itemize}
     \item [(i)]  $E^{\rm proj}(\mathcal M_k, \mathcal M_k')=0$ and $E^{\rm proj}(\mathcal M_k',\mathcal M_k)>0$. 
     \item [(ii)] $F_{\mathcal M_k'}[{\bf g}_{\mathcal M_k}]=0$ and $F_{\mathcal M_k}[{\bf g}_{\mathcal M_k'}]>0$.
 \end{itemize}
\end{lemma}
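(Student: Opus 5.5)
Part (ii) follows from (i) by Theorem \ref{thm:FE-hom}. Both $\mathcal M_k$ and $\mathcal M_k'$ are $\tau$-rigid pairs, being summands of $\tau$-tilting pairs, so
\[F_{\mathcal M_k'}[{\bf g}_{\mathcal M_k}]=E^{\rm proj}(\mathcal M_k,\mathcal M_k')\quad\text{and}\quad F_{\mathcal M_k}[{\bf g}_{\mathcal M_k'}]=E^{\rm proj}(\mathcal M_k',\mathcal M_k).\]
It therefore suffices to prove (i).

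For (i), write $\mathcal M=(M,P)$, $\mathcal M'=(M',P')$, and $\mathcal U=(U,Q)=\oplus_{i\neq k}\mathcal M_i$, which is almost $\tau$-tilting. Since $\mathcal M'$ is a left mutation, $\Fac M'\subsetneq\Fac M$. By Theorem \ref{thm-air-mutation}(ii) this forces $\Fac M'=\Fac U$ and $\Fac M={}^{\bot}(\tau U)\cap Q^{\bot}$.

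\emph{Vanishing.} We show $E^{\rm proj}(\mathcal M_k,\mathcal M_k')=0$. Write $\mathcal M_k=(X,R)$ and $\mathcal M_k'=(X',R')$, where exactly one component of each is nonzero; then $E^{\rm proj}(\mathcal M_k,\mathcal M_k')=\hom_A(X',\tau X)+\hom_A(R,X')$. By Theorem \ref{thm-air-mutation}(i) applied to the $\tau$-tilting pair $\mathcal M$, we have $\Fac M={}^{\bot}(\tau M)\cap P^{\bot}$. Moreover $X'\in\Fac M'=\Fac U\subseteq\Fac M$. Since $\tau X$ is a summand of $\tau M$ and $R$ is a summand of $P$, it follows that $\Hom_A(X',\tau X)=0$ and $\Hom_A(R,X')=0$.

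\emph{Positivity.} We show $E^{\rm proj}(\mathcal M_k',\mathcal M_k)>0$, and this is the main step. Suppose instead that $E^{\rm proj}(\mathcal M_k',\mathcal M_k)=0$. Together with the vanishing just proved, this gives $E^{\rm sym}(\mathcal M_k,\mathcal M_k')=0$. Each of $\mathcal M_k,\mathcal M_k'$ is also compatible with $\mathcal U$, since both $\mathcal M$ and $\mathcal M'$ are $\tau$-rigid. Hence $\mathcal U\oplus\mathcal M_k\oplus\mathcal M_k'$ is a $\tau$-rigid pair; this uses that $E^{\rm sym}$ vanishing is exactly pairwise Hom-vanishing, i.e.\ Corollary \ref{cor-F=E}(ii) applied summandwise. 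But $\mathcal M_k\not\cong\mathcal M_k'$, so this $\tau$-rigid pair has $|A|+1$ non-isomorphic indecomposable summands. This contradicts the fact that a basic $\tau$-rigid pair has at most $|A|$ summands \cite{air_2014}. Hence $E^{\rm proj}(\mathcal M_k',\mathcal M_k)\neq 0$, and since it is a sum of dimensions it is positive.

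If one prefers to avoid the summand count, an alternative is available. The pair $\mathcal U\oplus\mathcal M_k\oplus\mathcal M_k'$, being $\tau$-rigid, would be a direct summand of some $\tau$-tilting pair (Bongartz completion). That $\tau$-tilting pair contains both $\mathcal M$ and $\mathcal M'$, and hence by uniqueness of the basic $\tau$-tilting pair with a given torsion class (Theorem \ref{thm:air-torsion}) it equals both, contradicting $\Fac M'\neq\Fac M$.
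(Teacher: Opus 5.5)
Your proof is correct. Part (ii) and the vanishing half of (i) go as in the paper. The paper first notes $\mathcal M_k=(M_k,0)$, so the $\hom_A(R,X')$ term is absent; you kill that term via $P^{\bot}$ instead, and both work.

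The positivity half is where you take a different route. The paper argues directly. It has $M_k\in\Fac M\subseteq{}^{\bot}(\tau U)\cap Q^{\bot}$ but $M_k\notin\Fac M'={}^{\bot}(\tau U)\cap Q^{\bot}\cap{}^{\bot}(\tau M_k')\cap (Q_k')^{\bot}$. Hence $\hom_A(M_k,\tau M_k')+\hom_A(Q_k',M_k)\neq 0$, an explicit witness that uses nothing beyond Theorem \ref{thm-air-mutation} and shows $\mathcal M_k=(M_k,0)$ along the way.

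Your contradiction argument in effect shows that $E^{\rm sym}(\mathcal M_k,\mathcal M_k')>0$ for every mutation, left or right. Otherwise $\mathcal M\oplus\mathcal M_k'$ would be a $\tau$-rigid pair with $|A|+1$ pairwise non-isomorphic indecomposable summands. Left-ness then only serves to kill one direction. To get $|A|+1$ you should also say $\mathcal M_k'\not\cong\mathcal M_i$ for $i\neq k$, which follows from basicness of $\mathcal M'$.

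Your route relies on the bound $|M|+|P|\le|A|$ for $\tau$-rigid pairs from \cite{air_2014}. The paper never states this bound, though it follows from Proposition \ref{pro-minmax} and Theorem \ref{thm:air-torsion}. In exchange, your argument mirrors the paper's cluster-side Proposition \ref{prop:exchange-pair}. There the symmetric invariant of an exchange pair is positive, and the sign of the $c$-vector forces one partial invariant to vanish.

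In your fallback via Bongartz completion, write $(T,S)$ for the completion. The step ``it equals both'' first needs the sandwich $\Fac M\subseteq\Fac T\subseteq{}^{\bot}(\tau M)\cap P^{\bot}=\Fac M$, from Proposition \ref{pro-minmax}(ii) and Theorem \ref{thm-air-mutation}(i). Only then does the bijection of Theorem \ref{thm:air-torsion}(ii) apply.
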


\begin{proof}
(i) We write $\oplus_{i\neq k}\mathcal M_i=(U,Q)$, $\mathcal M_k=(M_k, Q_k)$ and $\mathcal M_k'=(M_k', Q_k')$. Thus \[\mathcal M=(U\oplus M_k, Q\oplus Q_k)\;\;\;\text{and}\;\;\;\mathcal M'=(U\oplus M_k', Q\oplus Q_k').\]
Since $\mathcal M_k$ is  indecomposable, we know that either $M_k$ or $Q_k$ is zero. Similarly, either $M_k'$ or $Q_k'$ is zero. 

Since the mutation $\mathcal M'=\mu_k(\mathcal M)$ is a left mutation, we have $\Fac (U\oplus M_k')\subsetneq \Fac (U\oplus M_k)$.
This implies that $M_k\neq 0$. Thus $\mathcal M_k=(M_k, 0)$ and  $\mathcal M=(U\oplus M_k, Q)$. Since $\mathcal M'=(U\oplus M_k', Q\oplus Q_k')$ and $\mathcal M=(U\oplus M_k, Q)$ are $\tau$-tilting pairs and by Theorem \ref{thm-air-mutation} (i), we know that 
\begin{eqnarray*}
    \Fac (U\oplus M_k')&=&\prescript{\bot}{}{(\tau U\oplus \tau M_k')}\cap (Q\oplus Q_k')^\bot=[\prescript{\bot}{}{(\tau U)}\cap Q^\bot] \cap [\prescript{\bot}{}{(\tau M_k')}\cap (Q_k')^\bot],\\
    \Fac (U\oplus M_k)&=&\prescript{\bot}{}{(\tau U\oplus \tau M_k)}\cap Q^\bot=[\prescript{\bot}{}{(\tau U)}\cap Q^\bot] \cap \prescript{\bot}{}{(\tau M_k)},
\end{eqnarray*}
Since  $M_k'\in\Fac (U\oplus M_k')\subsetneq \Fac(U\oplus M_k)$ and $M_k\in \Fac(U\oplus M_k)\setminus\Fac(U\oplus M_k')$,
we have
\[ M_k'\in \prescript{\bot}{}{(\tau M_k)}\;\;\;\text{and}\;\;\; M_k\notin \prescript{\bot}{}{(\tau M_k')}\cap (Q_k')^\bot.
\]
Thus we have $E^{\rm proj}(\mathcal M_k, \mathcal M_k')=\hom_A(M_k',\tau M_k)=0$ and
\[
E^{\rm proj}(\mathcal M_k', \mathcal M_k)=\hom_A(M_k,\tau M_k')+\hom_A(Q_k', M_k)>0.
\]

(ii) This follows from (i) and Theorem \ref{thm:FE-hom}.
\end{proof}

\begin{theorem}
 Let $U$ and $V$ be two $\tau$-rigid modules in $\mod A$. If $U$ and $V$ have the same Newton polytope, i.e., $\mathsf P(U)=\mathsf P(V)$, then $U\cong V$.    
\end{theorem}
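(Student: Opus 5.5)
We mirror the proof of Theorem \ref{A-thm} and argue by induction on $|U\oplus V|$. If $|U\oplus V|=0$, then $U=V=0$ and there is nothing to prove. Otherwise, Lemma \ref{lem:sum-rigid} shows that $U\oplus V$ is $\tau$-rigid, so $(\mathrm{basic}(U\oplus V),0)$ is a basic $\tau$-rigid pair. Let $\mathcal M=(M,P)=\oplus_{i=1}^n\mathcal M_i$ be its left Bongartz completion (Definition \ref{def:comp}), so that $\Fac M=\Fac(U\oplus V)$. Every indecomposable summand of $U$ or $V$ is one of the $\mathcal M_i$, so we may write $U=\oplus_i M_i^{a_i}$ and $V=\oplus_i M_i^{b_i}$, where $a_i=b_i=0$ whenever $\mathcal M_i$ is not a summand of $(U\oplus V,0)$.

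The first step is to produce a left mutation. Since $U\oplus V\neq 0$, we have $\Fac M\neq 0=\Fac 0$, where $0$ is the positive part of the basic $\tau$-tilting pair $(0,A)$. Proposition \ref{pro-air} then gives a left mutation $\mathcal M'=\mu_k(\mathcal M)$. By Lemma \ref{lem:exchange}, the summand $\mathcal M_k=(M_k,0)$ has nonzero positive part. Moreover, $M_k$ must be a summand of $U\oplus V$. Otherwise $(U\oplus V,0)$ would be a direct summand of $\mathcal M'$, and then $\Fac M'\supseteq\Fac(U\oplus V)=\Fac M$ would contradict $\Fac M'\subsetneq\Fac M$ (compare Remark \ref{rmk:left-completion}). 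Hence $a_k+b_k>0$.

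The second step is to show $a_k=b_k$, in analogy with Lemma \ref{lem:ak=bk}. By Corollary \ref{cor:additive}, applied to the decorated modules $(U,0)$ and $(V,0)$ with the summands $\mathcal M_i$ and the $\tau$-rigid pair $\mathcal M_k'$,
\[F_U[{\bf g}_{\mathcal M_k'}]=\sum_i a_iF_{\mathcal M_i}[{\bf g}_{\mathcal M_k'}],\qquad F_V[{\bf g}_{\mathcal M_k'}]=\sum_i b_iF_{\mathcal M_i}[{\bf g}_{\mathcal M_k'}].\]
For $i\neq k$, the pair $\mathcal M_i\oplus\mathcal M_k'$ is $\tau$-rigid, being a summand of $\mathcal M'$. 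Corollary \ref{cor-F=E} and the nonnegativity of both partial $F$-invariants then give $F_{\mathcal M_i}[{\bf g}_{\mathcal M_k'}]=0$. Lemma \ref{lem:exchange}(ii) gives $c:=F_{\mathcal M_k}[{\bf g}_{\mathcal M_k'}]>0$. Since $\mathsf P(U)=\mathsf P(V)$, Proposition \ref{lem:Fei-vertex} and Remark \ref{rmk:cons-1}(a) show that $U$ and $V$ have the same tropical $F$-polynomial, so $a_kc=b_kc$ and hence $a_k=b_k$.

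The final step is the cancellation. Set $U=U'\oplus M_k^{a_k}$ and $V=V'\oplus M_k^{a_k}$. Corollary \ref{cor:minkowski-sum} together with the cancellation law, Corollary \ref{cor:cancel-law}, yields $\mathsf P(U')=\mathsf P(V')$. Both $U'$ and $V'$ are $\tau$-rigid, and $|U'\oplus V'|<|U\oplus V|$ because $M_k$ has been removed. By induction $U'\cong V'$, and therefore $U\cong V$.

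The main obstacle is the first step: guaranteeing that some left mutation of the left Bongartz completion exchanges a summand that actually occurs in $U\oplus V$. The argument through the minimality of $\Fac M$ should handle this. The rest is routine bookkeeping with the additivity of Corollary \ref{cor:additive}, where care is needed only in applying it to the decorated modules $(U,0)$ and $(V,0)$.
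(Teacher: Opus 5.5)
Your proposal is correct and follows the paper's proof essentially step for step. The shared steps are the left Bongartz completion of $U\oplus V$, a left mutation from Proposition \ref{pro-air} whose exchanged summand must occur in $U\oplus V$ by minimality of $\Fac M$, evaluation of the tropical $F$-polynomials at ${\bf g}_{\mathcal M_k'}$ to force $a_k=b_k$, and Minkowski cancellation with induction on $|U\oplus V|$. The differences are cosmetic: you get additivity from Corollary \ref{cor:additive} rather than from Proposition \ref{pro:DWZ-FmFn} with Corollary \ref{cor-F-prod}, and you make explicit the nonnegativity needed to split $(\mathcal M_i\mid\mid\mathcal M_k')_F=0$ into two vanishing partial $F$-invariants.
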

\begin{proof}
By Lemma \ref{lem:sum-rigid}, we know that the direct sum $U\oplus V$ is $\tau$-rigid.  We will prove that $U\cong V$ by reducing the number $|U\oplus V|$ of iso-classes of indecomposable direct summands of $U\oplus V$.
We may assume $|U\oplus V|>0$; otherwise, we have $U=V=0$ and the desired result holds trivially.

Let $(U\oplus V)^\flat$ be the basic $\tau$-rigid module such that $\add (U\oplus V)^\flat=\add U\oplus V$. Let $\mathcal M=(M,P)$ be the left Bongartz completion of $(U\oplus V)^\flat$, which is the basic $\tau$-tilting pair satisfying that $(U\oplus V)^\flat$ is a direct summand of $M$ and $\Fac M=\Fac (U\oplus V)$. 

Let us write $\mathcal M=\oplus_{i=1}^n\mathcal M_i$, where each $\mathcal M_i$ is an indecomposable $\tau$-rigid pair. We can assume that 
\begin{eqnarray}\label{eqn:MN-ab}
  \mathcal U:=(U,0)=\oplus_{i=1}^n \mathcal M_i^{a_i}\;\;\; \text{and}\;\;\;\mathcal V:=(V,0)=\oplus_{i=1}^n \mathcal M_i^{b_i},  
\end{eqnarray}
where $a_i, b_i\in\mathbb Z_{\geq 0}$.

Since $|U\oplus V|>0$, we know that $\{0\}\subsetneq \Fac M=\Fac (U\oplus V)$. Then by Proposition \ref{pro-air}, there exists a left mutation $\mathcal M'=(M',P')=\mu_k(\mathcal M)$ of $\mathcal M$ such that $\{0\}\subseteq \Fac M'\subsetneq \Fac M$. 

We claim that $\mathcal M_k$ is a direct summand of $((U\oplus V)^\flat,0)$. Otherwise,  $((U\oplus V)^\flat,0)$ is also a direct summand of $\mathcal M'=(M', P')$. Then by Remark \ref{rmk:left-completion} and the fact that  $\mathcal M=(M,P)$ is the left Bongartz completion of $(U\oplus V)^\flat$, we have $\Fac M\subseteq \Fac M'$. This contradicts  $\Fac M'\subsetneq \Fac M$.

Since $\mathcal M_k$ is a direct summand of $((U\oplus V)^\flat,0)$, we can assume that $\mathcal M_k=(M_k,0)$, where $M_k$ is an indecomposable direct summand of $(U\oplus V)^\flat$. Let $\mathcal M_k'$ be the new indecomposable $\tau$-rigid pair obtained in $\mathcal M'=\mu_k(\mathcal M)$.

By Lemma \ref{lem:exchange}, we know that $F_{\mathcal M_k}[{\bf g}_{\mathcal M_k'}]>0$.
For $i\neq k$, we know that $\mathcal M_i\oplus \mathcal M_k'$ is $\tau$-rigid. Then by Corollary \ref{cor-F=E} (ii), we have $F_{\mathcal M_i}[{\bf g}_{\mathcal M_k'}]=0$ for any $i\neq k$.

By \eqref{eqn:MN-ab} and Proposition \ref{pro:DWZ-FmFn}, we have $F_{\mathcal U}=\prod_{i=1}^nF_{\mathcal M_i}^{a_i}$ and $F_{\mathcal V}=\prod_{i=1}^nF_{\mathcal M_i}^{b_i}$. Then by Corollary \ref{cor-F-prod}, we have 
\begin{eqnarray*}
 F_{\mathcal U}[{\bf g}_{\mathcal M_k'}]&=&\sum_{i=1}^na_iF_{\mathcal M_i}[{\bf g}_{\mathcal M_k'}] =a_k F_{\mathcal M_k}[{\bf g}_{\mathcal M_k'}],\\ 
 F_{\mathcal V}[{\bf g}_{\mathcal M_k'}]&=&\sum_{i=1}^nb_iF_{\mathcal M_i}[{\bf g}_{\mathcal M_k'}] =b_k F_{\mathcal M_k}[{\bf g}_{\mathcal M_k'}].
\end{eqnarray*}
Since $U$ and $V$ have the same Newton polytope, we know that $F_U[{\bf r}]=F_V[{\bf r}]$ for any ${\bf r}\in\mathbb Z^n$. In particular, we have
\[
a_k F_{\mathcal M_k}[{\bf g}_{\mathcal M_k'}]= F_{\mathcal U}[{\bf g}_{\mathcal M_k'}]= F_{U}[{\bf g}_{\mathcal M_k'}]= F_{V}[{\bf g}_{\mathcal M_k'}]= F_{\mathcal V}[{\bf g}_{\mathcal M_k'}]=b_k F_{\mathcal M_k}[{\bf g}_{\mathcal M_k'}].
\]
Since $F_{\mathcal M_k}[{\bf g}_{\mathcal M_k'}]>0$, we get $a_k=b_k$. So we have 
 following decompositions \[U=U'\oplus M_k^{a_k},\quad V=V'\oplus M_k^{b_k}=V'\oplus M_k^{a_k}.\] Note that $U'$ and $V'$ do not contain $M_k$ as  a direct summand. In particular, we have \[|U'\oplus V'|<|U\oplus V|.\]
By Corollary \ref{cor:minkowski-sum}, we know that $\mathsf P(U)=\mathsf P(U')+\mathsf P(M_k^{a_k})$ and $\mathsf P(V)=\mathsf P(V')+\mathsf P(M_k^{a_k})$. Since $\mathsf P(U)=\mathsf P(V)$ and by Corollary \ref{cor:cancel-law}, we get $\mathsf P(U')=\mathsf P(V')$.

In order to show $U\cong V$, it suffices to show $U'\cong V'$.
Now $U'$ and $V'$ are two $\tau$-rigid modules satisfying  $\mathsf P(U')=\mathsf P(V')$ and $|U'\oplus V'|<|U\oplus V|$. We can therefore apply the reduction and complete the proof.
\end{proof}

\subsection{Newton polytopes of left finite multi-semibricks}
A module $M\in\mod A$ is called {\em left finite}, if the smallest torsion class $\langle M\rangle_{\rm tors }$ containing $M$ is a functorially finite torsion class in $\mod A$. A module $C\in\mod A$ is called a {\em brick}, if $\End_A(C)\cong \mathsf k$.  A set $\Omega$ of iso-classes of bricks is called a {\em semibrick}, if $\Hom_A(C,C')=0$ for any $C,C'\in\Omega$ with $C\neq C'$. A semibrick $\Omega$ is {\em left finite}, if the smallest torsion class $\langle \Omega\rangle_{\rm tors}$ containing the bricks in $\Omega$ is a functorially finite torsion class.

\begin{proposition}[{\cite[Proposition 2.9]{Asai-semibrick}}] \label{pro:asai-bijection}
The map $\Omega\mapsto \langle \Omega\rangle_{\rm tors}$ gives a bijection from the left finite semibricks in $\mod A$ to functorially finite torsion classes in $\mod A$. In particular, the left finite semibricks in $\mod A$ are in bijection with the basic $\tau$-tilting pairs in $\mod A$.
\end{proposition}

Let $\mathcal M=(M,Q)$ be a basic $\tau$-tilting pair in $\mod A$ and $\Omega$ a left finite semibrick in $\mod A$. The set $\Omega$ is called the {\em labeling semibrick} of $\mathcal M=(M,Q)$, if $\Fac M=\langle\Omega\rangle_{\rm tors}$ holds. This name is justified by the following result.

\begin{proposition}[{\cite[Lemma 2.5, Proposition 2.13]{Asai-semibrick}}] \label{pro:asai-brick}
    Let $\mathcal M=\oplus_{j=1}^n(M_j, Q_j)$ a basic $\tau$-tilting pair in $\mod A$. Denote by $I\subseteq [1,n]$ the subset such that $i\in I$ if and only if $\mu_i(\mathcal M)$ is a left mutation of $\mathcal M$.
    Then the following statements hold.
    \begin{itemize}
        \item [(i)] For each $k\in I$, there exists a unique brick $C_k$, called a labeling brick, satisfying that 
\begin{eqnarray*}
            C_k\in\Fac M_k\quad\text{and}\quad \Hom_A(M_j,C_k)=0\quad \forall j\in[1,n]\setminus k,
        \end{eqnarray*}
        i.e., $C_k$ belongs to $(\oplus_{j\neq k}M_j)^\bot\cap \Fac M_k$.
        \item [(ii)] Let $\Omega:=\{C_i\}_{i\in I}$ be the set of labeling bricks for the left mutations of $\mathcal M$. Then $\Omega$ is a left finite semibrick and we have $\langle \Omega\rangle_{\rm tors}=\Fac M$.
    \end{itemize}
\end{proposition}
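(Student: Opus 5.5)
The statement is Asai's result. I would prove it using only the mutation theory of Theorem \ref{thm-air-mutation}, the torsion pair $(\Fac U,U^\bot)$, and Proposition \ref{pro-air}, together with one reduction input that I would import rather than derive.

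\emph{Part (i).} Fix $k\in I$ and put $U_k:=\oplus_{j\neq k}M_j$, with $(U_k,\oplus_{j\neq k}Q_j)$ the almost $\tau$-tilting summand. By Theorem \ref{thm-air-mutation} (ii), the two completions of this summand have torsion classes $\Fac U_k$ and ${}^\bot(\tau U_k)\cap(\oplus_{j\ne k}Q_j)^\bot$. Since $\mu_k(\mathcal M)$ is a left mutation, $\Fac M$ is the larger one and the mutation has torsion class $\Fac U_k$. In particular $M_k\neq 0$ and $M_k\notin\Fac U_k$.

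Next take the canonical sequence of $M_k$ for the torsion pair $(\Fac U_k,U_k^\bot)$:
\[0\to t M_k\to M_k\to C_k\to 0.\]
Then $C_k\neq 0$, $C_k\in\Fac M_k$, and $\Hom_A(M_j,C_k)=0$ for every $j\neq k$.

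To see that $C_k$ is a brick and is the unique such module, I would study the subcategory $\mathcal W_k:=\Fac M\cap U_k^\bot$. Since $\Fac U_k\subseteq\Fac M$ is a pair of functorially finite torsion classes differing by one mutation, $\tau$-tilting reduction (Jasso) identifies $\mathcal W_k$ with $\mod \Gamma$ for a rank-one algebra $\Gamma$. Such a $\Gamma$ is local, so $\mathcal W_k$ has a unique simple object $S$. This is the input I would cite rather than reprove. Over a local algebra, every non-simple module $X$ has a nonzero nilpotent endomorphism, namely $X\to\mathrm{top}\,X\to\mathrm{soc}\,X\to X$. Hence the only brick in $\mathcal W_k$ is $S$.

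It remains to place $C_k$ inside this picture:
\begin{itemize}
\item Any module in $\Fac M_k\cap U_k^\bot$ lies in $\mathcal W_k$, so any brick satisfying the conditions in (i) equals $S$. This gives uniqueness.
\item $C_k$ itself lies in $\mathcal W_k$. I would show it is exactly the projective generator of $\mathcal W_k$, i.e.\ $\Gamma$ viewed as a $\Gamma$-module. If $\Gamma\neq\mathbf k$, then I would replace $C_k$ by the top of $C_k$ in $\mathcal W_k$, which is still a quotient of $M_k$ and lies in $U_k^\bot$, and which is $S$.
\end{itemize}
So for existence I would define the labeling brick as the simple object $S$ of $\mathcal W_k$, which lies in $\Fac M_k\cap U_k^\bot$ by the second point.

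\emph{Part (ii), semibrick property.} For $i\neq k$ in $I$ we have $C_i\in\Fac M_i$ and $\Hom_A(M_i,C_k)=0$. Hence $\Hom_A(\Fac M_i,C_k)=0$, and so $\Hom_A(C_i,C_k)=0$.

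\emph{Part (ii), torsion class.} Each $C_k$ lies in $\Fac M$, so $\langle\Omega\rangle_{\rm tors}\subseteq\Fac M$. For the reverse inclusion, suppose $\mathcal T:=\langle\Omega\rangle_{\rm tors}\subsetneq\Fac M$. If $\mathcal T$ is functorially finite, Proposition \ref{pro-air} gives a left mutation $\mu_k(\mathcal M)$ with $\mathcal T\subseteq\Fac U_k\subsetneq\Fac M$. Then $C_k\in\mathcal T\subseteq\Fac U_k$ and also $C_k\in U_k^\bot$, which forces $C_k=0$, a contradiction.

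\emph{Main obstacle.} The difficulty is that $\mathcal T$ is not known in advance to be functorially finite. I would handle this with the Demonet--Iyama--Jasso version of Proposition \ref{pro-air} for arbitrary torsion classes. Its content is that if $\mathcal T\subsetneq\Fac M$ with $\Fac M$ functorially finite, then some arrow of the Hasse quiver starting at $\Fac M$ still ends at a torsion class containing $\mathcal T$, and every such arrow is a left mutation. Alternatively, I would argue directly through the brick labelling of arrows in the lattice of torsion classes. The contradiction above then applies verbatim.
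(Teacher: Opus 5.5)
The paper gives no argument for this proposition; it is simply quoted from Asai. Your proof is correct as an independent derivation. It rests on two imported theorems:
\begin{itemize}
\item Jasso's reduction, applied to $(U_k,\oplus_{j\neq k}Q_j)$. Since $\Fac M={}^{\bot}(\tau U_k)\cap(\oplus_{j\neq k}Q_j)^{\bot}$ for $k\in I$, this gives $\mathcal W_k=\Fac M\cap U_k^{\bot}\simeq\mod\Gamma$ with $\Gamma$ local.
\item The Demonet--Iyama--Jasso version of Proposition~\ref{pro-air} for arbitrary torsion classes below a functorially finite one.
\end{itemize}
You correctly identify the one point where the paper's own tools fall short: $\langle\Omega\rangle_{\rm tors}$ is not known in advance to be functorially finite.

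A more explicit route, matching the description of labeling bricks via $M/\mathrm{rad}_{\End_A(M)}M$ in the semibrick literature, avoids the Hasse-quiver theorem. Take $B=\End_A(M)$ and $J=\mathrm{rad}B$, and set $C_k:=M_k/\sum_{f\in\mathrm{rad}_A(M,M_k)}\mathrm{Im}f$, the $k$-th summand of $M/JM$. The kernel lies in $\Fac M$, where $M$ is Ext-projective. Hence every map $M_j\to C_k$ or $C_k\to C_k$ lifts to a map into $M_k$. This gives $\Hom_A(M_j,C_k)=0$ for $j\neq k$ and $\End_A(C_k)=\mathbf k$ at once. A Nakayama argument shows $C_k\neq 0$ exactly when $M_k\notin\Fac U_k$. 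The inclusion $\Fac M\subseteq\langle\Omega\rangle_{\rm tors}$ then follows from the filtration $M\supseteq JM\supseteq J^2M\supseteq\cdots\supseteq 0$, whose layers are quotients of direct sums of copies of $M/JM$.

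Comparing the two routes:
\begin{itemize}
\item The explicit route gives a formula for the brick and an elementary proof of (ii).
\item Yours gives the cleaner picture for (i): the labeling brick is the unique simple object of the rank-one category $\mathcal W_k$, so uniqueness is automatic. The explicit route still needs an input of exactly this kind, or DIJ's brick labelling, for the uniqueness clause.
\end{itemize}

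Two minor remarks.
\begin{itemize}
\item For existence you need not identify $C_k$ with the projective generator of $\mathcal W_k$. Since $S\in\Fac(U_k\oplus M_k)$ and $\Hom_A(U_k,S)=0$, any epimorphism $U_k^a\oplus M_k^b\to S$ is already surjective on $M_k^b$.
\item The nilpotent endomorphism $X\to\mathrm{top}X\to\mathrm{soc}X\to X$ should be taken for indecomposable non-simple $X$. There $\mathrm{soc}X\subseteq\mathrm{rad}X$, so the composite squares to zero. For $X=S\oplus S$ it can be idempotent. This is harmless, since bricks are indecomposable.
\end{itemize}
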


\begin{lemma}\label{lem:multi-brick}
Let $\mathcal M=\oplus_{j=1}^n \mathcal M_j$ be a basic $\tau$-tilting pair in $\mod A$ and let $\Omega=\{C_i\}_{i\in I}$ be the labeling semibrick of $\mathcal M$. Fix a labeling brick $C_k\in\Omega$ and let  $\mathcal M'=\mu_k(\mathcal M)=(\oplus_{j\neq k}\mathcal M_j)\oplus\mathcal M_k'$ be the corresponding left mutation of $\mathcal M$. Then the following statements hold.
\begin{itemize}
    \item [(i)] Denote by $\mathcal M_k'=(M_k',Q_k')$. Then  $C_k\notin \prescript{\bot}{}(\tau M_k')\cap Q_k'^{\bot}$ and $C_i\in \prescript{\bot}{}(\tau M_k')\cap Q_k'^{\bot}$ for any $i\in I\setminus \{k\}$.
    \item[(ii)] We have $F_{C_k}[{\bf g}_{{\mathcal M}_k'}]\neq 0$ and  $F_{C_i}[{\bf g}_{{\mathcal M}_k'}]=0$ for any $i\in I\setminus\{k\}$.
\end{itemize}
\end{lemma}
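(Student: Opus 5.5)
Part (ii) follows directly from (i) and Corollary \ref{cor:dom-hom}, applied to the $\tau$-rigid pair $\mathcal M_k'$. That corollary says $F_N[{\bf g}_{\mathcal M_k'}]=0$ if and only if $N\in \prescript{\bot}{}(\tau M_k')\cap Q_k'^{\bot}$. So the real content is (i), and I will work with the torsion classes. Write $\mathcal M=(U\oplus M_k,Q)$ with $\mathcal M_k=(M_k,0)$. This is the first step in the proof of Lemma \ref{lem:exchange}: a left mutation forces $M_k\neq 0$. Also write $\mathcal M'=(U\oplus M_k',Q\oplus Q_k')$. As in that proof, Theorem \ref{thm-air-mutation} (i) gives
\[
\Fac M'=\mathcal T_0\cap \mathcal T_k',\qquad \mathcal T_0:=\prescript{\bot}{}{(\tau U)}\cap Q^\bot,\quad \mathcal T_k':=\prescript{\bot}{}{(\tau M_k')}\cap Q_k'^{\bot},
\]
and $\Fac M=\mathcal T_0\cap\prescript{\bot}{}{(\tau M_k)}\subseteq \mathcal T_0$.

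For $i\in I\setminus\{k\}$, I will show that $C_i\in \Fac M'$, which lies in $\mathcal T_k'$. By Proposition \ref{pro:asai-brick} (i), $C_i\in \Fac M_i$, and $M_i$ is a summand of $U$ (it is nonzero because $i\in I$). Hence $C_i\in\Fac U$. Since $\mathcal M'$ contains $(U,Q)$ as a summand, $\Fac U\subseteq \Fac M'$. So $C_i\in\mathcal T_k'$.

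For $C_k$, I argue by contradiction. Suppose $C_k\in\mathcal T_k'$. We have $C_k\in \Fac M_k\subseteq \Fac M\subseteq \mathcal T_0$, so $C_k\in \mathcal T_0\cap\mathcal T_k'=\Fac M'$. Then every brick of $\Omega$ lies in $\Fac M'$, and since $\Fac M'$ is a torsion class, $\langle\Omega\rangle_{\rm tors}\subseteq \Fac M'$. By Proposition \ref{pro:asai-brick} (ii), $\langle\Omega\rangle_{\rm tors}=\Fac M$, which gives $\Fac M\subseteq\Fac M'$. This contradicts $\Fac M'\subsetneq \Fac M$.

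The main point to double-check is the step $C_i\in\Fac U$. It requires $M_i\neq 0$ for $i\in I$, which holds by the argument of Lemma \ref{lem:exchange}, and $i\neq k$, so that $M_i$ is a summand of $U$ rather than $M_k$. Everything else is formal.
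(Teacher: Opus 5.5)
Your proof is correct. Part (ii) and the claim for $C_i$ with $i\in I\setminus\{k\}$ go exactly as in the paper: $C_i\in\Fac M_i\subseteq\Fac U\subseteq\Fac M'=\mathcal T_0\cap\mathcal T_k'$, followed by Corollary \ref{cor:dom-hom}.

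The difference is in showing $C_k\notin\mathcal T_k'$.

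\emph{The paper's route.} It argues directly, using the orthogonality half of Proposition \ref{pro:asai-brick} (i). Since $C_k\in U^{\bot}$ and $C_k\neq 0$, we get $C_k\notin\Fac U$. For a left mutation, Theorem \ref{thm-air-mutation} (ii) gives $\Fac M'=\Fac U$. Since also $C_k\in\Fac M\subseteq\mathcal T_0$, the decomposition $\Fac M'=\mathcal T_0\cap\mathcal T_k'$ forces $C_k\notin\mathcal T_k'$.

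\emph{Your route.} You never use $C_k\in U^{\bot}$ or the identification $\Fac M'=\Fac U$. Instead you argue by contradiction, combining the generation property $\langle\Omega\rangle_{\rm tors}=\Fac M$ from Proposition \ref{pro:asai-brick} (ii) with the strict inclusion $\Fac M'\subsetneq\Fac M$.

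\emph{Comparison.} The paper's argument is local: it explains why $C_k$ leaves, namely that it is orthogonal to the surviving summands. Yours is more robust. It works verbatim for any family $X_i\in\Fac M_i$ ($i\in I$) that generates $\Fac M$, with no brick or Hom-vanishing hypothesis. It shows that some generator must leave $\Fac M'$, and since all others stay, it must be the $k$-th one.

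\emph{Minor remark.} The condition $M_i\neq 0$ that you flag as the point to double-check is not needed. For $i\neq k$, $M_i$ is a direct summand of $U$, so $\Fac M_i\subseteq\Fac U$ regardless.
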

\begin{proof}
 (i) Let us write 
$\mathcal M_j=(M_j,Q_j)$ for $j\in[1,n]$ and $(U,Q):=\oplus_{j\neq k}(M_j,Q_j)$.
  Then \[\mathcal M=(U\oplus M_k,Q\oplus Q_k)\quad \text{and}\quad \mathcal M'=(U\oplus M_k',Q\oplus Q_k').\]
 Since $\mathcal M'=\mu_k(\mathcal M)$ is a left mutation, we have
\begin{eqnarray*}
   \Fac (U\oplus M_k')=\Fac U\quad\text{and}\quad  \Fac (U\oplus M_k)=\prescript{\bot}{}(\tau U)\cap Q^{\bot}. 
\end{eqnarray*}
 On the other hand, since  $\mathcal M'$ is a $\tau$-tilting pair and by Theorem \ref{thm-air-mutation} (i), we know that 
\begin{eqnarray}\label{eqn:u-nk-1}
    \Fac (U\oplus M_k')=[\prescript{\bot}{}(\tau U)\cap Q^{\bot}]\cap [\prescript{\bot}{}(\tau M_k')\cap Q_k'^{\bot}].
\end{eqnarray}

By Proposition \ref{pro:asai-brick} (i), we see that $C_k\in U^{\bot}\cap \Fac M_k$. On the one hand, we know that 
 \[C_k\in\Fac M_k\subseteq \Fac(U\oplus M_k)=\prescript{\bot}{}(\tau U)\cap Q^{\bot}.\] On the other hand, we know that $C_k\in U^\bot$ and thus $C_k\notin \Fac U=\Fac(U\oplus M_k')$. Then by \eqref{eqn:u-nk-1}, we obtain that $C_k\notin \prescript{\bot}{}(\tau M_k')\cap Q_k'^{\bot}$.

 By Proposition \ref{pro:asai-brick} (i), we know that $C_i\in \Fac M_i\subseteq \Fac U=\Fac (U\oplus M_k')$ for $i\in I\setminus \{k\}$.
 Then by \eqref{eqn:u-nk-1}, we get $C_i\in \prescript{\bot}{}(\tau M_k')\cap Q_k'^{\bot}$.
 
 (ii) This follows from (i) and  Corollary \ref{cor:dom-hom}.
\end{proof}

\begin{definition}[Multi-semibrick]
A module $M\in \mod A$ is called a {\em multi-semibrick} if
$M$ has a decomposition $M\cong \oplus_{i=1}^rC_i^{a_i}$ such that each $C_i\in\mod A$ is a brick and $\Hom_A(C_i,C_j)=0$ for any $i\neq j$. 
\end{definition}

For a module $M\in\mod A$, we define its {\em support set}  $\supp(M)$ as the set of iso-classes of indecomposable modules which appears as a direct summand of $M$. Clearly, if $M$ is a multi-semibrick, then its support set $\supp(M)$ is a semibrick.

\begin{lemma}\label{lem:UV-torsion}
    Let $U$ and $V$ be two left finite modules in $\mod A$. If $U$ and $V$ have the same Newton polytope, then $\langle U\rangle_{\rm tors}=\langle V\rangle_{\rm tors}$.
\end{lemma}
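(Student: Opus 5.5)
Since $U$ is left finite, $\langle U\rangle_{\rm tors}$ is a functorially finite torsion class. By Theorem \ref{thm:air-torsion} it equals $\Fac M$ for a unique basic $\tau$-tilting pair $\mathcal M=(M,P)$. Likewise $\langle V\rangle_{\rm tors}=\Fac N$ for a basic $\tau$-tilting pair $\mathcal N=(N,Q)$. The aim is to show $\Fac M=\Fac N$, and by symmetry it suffices to prove $\Fac M\subseteq \Fac N$.

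The main tool is Corollary \ref{cor:dom-hom}. Because $\mathcal N$ is $\tau$-tilting, Theorem \ref{thm-air-mutation}(i) gives $\Fac N=\prescript{\bot}{}(\tau N)\cap Q^{\bot}$. Hence a module $X$ lies in $\Fac N$ if and only if $F_X[{\bf g}_{\mathcal N}]=0$. Since $V\in\Fac N$, we get $F_V[{\bf g}_{\mathcal N}]=0$. By hypothesis $\mathsf P(U)=\mathsf P(V)$, so Proposition \ref{lem:Fei-vertex} gives $\mathsf P(F_U)=\mathsf P(F_V)$, and the tropical polynomials agree (Remark \ref{rmk:cons-1}(a)). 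Therefore $F_U[{\bf g}_{\mathcal N}]=F_V[{\bf g}_{\mathcal N}]=0$, and Corollary \ref{cor:dom-hom} places $U$ in $\prescript{\bot}{}(\tau N)\cap Q^\bot=\Fac N$. Since $\Fac N$ is a torsion class containing $U$, it contains $\langle U\rangle_{\rm tors}=\Fac M$. Swapping the roles of $U$ and $V$ gives the reverse inclusion, so $\langle U\rangle_{\rm tors}=\langle V\rangle_{\rm tors}$.

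The one step that needs care is the identification $\Fac N=\prescript{\bot}{}(\tau N)\cap Q^\bot$. This is exactly where left finiteness enters: it guarantees that the torsion class is $\Fac$ of the positive part of a $\tau$-tilting pair, so the vanishing test $F_X[{\bf g}_{\mathcal N}]=0$ characterizes membership. Everything else is a direct application of the earlier results, so I expect no real obstacle. Note that only left finiteness of both modules is used; the brick property plays no role in this lemma.
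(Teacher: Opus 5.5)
Your proposal is correct and matches the paper's proof. You realize both torsion classes as $\Fac$ of basic $\tau$-tilting pairs, then use Corollary \ref{cor:dom-hom} and the equality of tropical polynomials forced by $\mathsf P(U)=\mathsf P(V)$ to place $U$ in $\Fac N$ and $V$ in $\Fac M$, concluding by minimality of the generated torsion class; your appeal to Proposition \ref{lem:Fei-vertex} to get $\mathsf P(F_U)=\mathsf P(F_V)$ just spells out a step the paper leaves implicit.
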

\begin{proof}
 Since $U$ and $V$ are left finite modules, there are two basic $\tau$-tilting pairs $\mathcal M=(M,P)$ and $\mathcal N=(N, Q)$ such that $\Fac M=\langle U\rangle_{\rm tors}$ and $\Fac N=\langle V\rangle_{\rm tors}$.
 Since $U\in\Fac M=\prescript{\bot}{}(\tau M)\cap P^{\bot}$, $V\in\Fac N=\prescript{\bot}{}(\tau N)\cap Q^{\bot}$ and
 by Corollary \ref{cor:dom-hom}, we have $F_U[{\bf g}_{\mathcal M}]=0$ and $F_V[{\bf g}_{\mathcal N}]=0$.

 Since $U$ and $V$ have the same Newton polytope, we know that $F_U[{\bf r}]=F_V[{\bf r}]$ for any ${\bf r}\in\mathbb Z^n$. In particular, we have $F_U[{\bf g}_{\mathcal N}]=F_V[{\bf g}_{\mathcal N}]=0$ and $F_V[{\bf g}_{\mathcal M}]=F_U[{\bf g}_{\mathcal M}]=0$.

 Then by Corollary \ref{cor:dom-hom} again, we see that $U\in \prescript{\bot}{}(\tau N)\cap Q^{\bot}=\Fac N=\langle V\rangle_{\rm tors}$ and $V\in \prescript{\bot}{}(\tau M)\cap P^{\bot}=\Fac M=\langle U\rangle_{\rm tors}$. Hence, we have $\langle U\rangle_{\rm tors}=\langle V\rangle_{\rm tors}$.
\end{proof}

\begin{lemma}\label{lem:UV-semibrick}
    Let $U$ and $V$ be two left finite multi-semibricks in $\mod A$. If $U$ and $V$ have the same Newton polytope, then $\supp(U)=\supp(V)$.
\end{lemma}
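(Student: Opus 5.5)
By Lemma \ref{lem:UV-torsion}, we have $\langle U\rangle_{\rm tors}=\langle V\rangle_{\rm tors}$. We will show that $\supp(U)$ and $\supp(V)$ are both the labeling semibrick of one and the same basic $\tau$-tilting pair. Uniqueness in Proposition \ref{pro:asai-bijection} then forces them to coincide.

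First, $\langle U\rangle_{\rm tors}=\langle \supp(U)\rangle_{\rm tors}$, because a torsion class is closed under direct sums and direct summands. Hence $\supp(U)$ is a semibrick generating the functorially finite torsion class $\langle U\rangle_{\rm tors}$. In particular, $\supp(U)$ is a left finite semibrick. The same holds for $\supp(V)$, and $\langle \supp(V)\rangle_{\rm tors}=\langle V\rangle_{\rm tors}=\langle U\rangle_{\rm tors}$.

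By Proposition \ref{pro:asai-bijection}, the map $\Omega\mapsto\langle\Omega\rangle_{\rm tors}$ is injective on left finite semibricks. Since $\supp(U)$ and $\supp(V)$ are left finite semibricks with the same torsion closure, we get $\supp(U)=\supp(V)$.

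The main point to check is that $\supp(U)$ is genuinely a semibrick: the summands $C_i$ in $U\cong\oplus C_i^{a_i}$ must be pairwise non-isomorphic. This is automatic, because $C_i\cong C_j$ with $i\neq j$ would give $\Hom_A(C_i,C_j)\neq 0$, contradicting the definition of a multi-semibrick. It also needs care that left finiteness of the module $U$ transfers to the set $\supp(U)$; this follows from the equality of torsion closures noted above. Both points are routine. The Newton polytope hypothesis enters only through Lemma \ref{lem:UV-torsion}.

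Note that Lemma \ref{lem:multi-brick} is not needed for this statement. It will instead compare the multiplicities $a_k$ and $b_k$ in the next step, via $F_{C_k}[{\bf g}_{\mathcal M_k'}]\neq0$, exactly as in the $\tau$-rigid case.
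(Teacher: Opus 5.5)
Your proof is correct and follows the paper's argument: Lemma~\ref{lem:UV-torsion} gives $\langle U\rangle_{\rm tors}=\langle V\rangle_{\rm tors}$, and then injectivity of $\Omega\mapsto\langle \Omega\rangle_{\rm tors}$ from Proposition~\ref{pro:asai-bijection}, applied to the left finite semibricks $\supp(U)$ and $\supp(V)$, forces $\supp(U)=\supp(V)$. Your extra checks spell out what the paper takes as clear: that $\supp(U)$ is a semibrick, and that $\langle \supp(U)\rangle_{\rm tors}=\langle U\rangle_{\rm tors}$ because torsion classes are closed under quotients and extensions.
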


\begin{proof}
Since $U$ and $V$ are two left finite multi-semibricks, we know that $\supp(U)$ and $\supp(V)$ are left finite semibricks in $\mod A$. Since $U$ and $V$ have the same Newton polytope and by Lemma \ref{lem:UV-torsion}, we know that  $\langle U\rangle_{\rm tors}=\langle V\rangle_{\rm tors}$. Since the two left finite semibricks $\supp(U)$ and $\supp(V)$ generate the same torsion class $\langle U\rangle_{\rm tors}=\langle V\rangle_{\rm tors}$ and by Proposition \ref{pro:asai-bijection}, we have $\supp(U)=\supp(V)$.
\end{proof}

\begin{theorem}
   Let $U$ and $V$ be two left finite multi-semibricks in $\mod A$. If $U$ and $V$ have the same Newton polytope, then $U\cong V$.
\end{theorem}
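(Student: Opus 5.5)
By Lemma \ref{lem:UV-semibrick}, we already know $\supp(U)=\supp(V)$. Write $\Omega:=\supp(U)=\supp(V)=\{C_i\}_{i\in I}$, a left finite semibrick. Then
\[U\cong\oplus_{i\in I}C_i^{a_i},\qquad V\cong \oplus_{i\in I}C_i^{b_i},\]
with all $a_i,b_i>0$. It remains to show $a_k=b_k$ for every $k\in I$. The plan is to test both Newton polytopes against suitable $g$-vectors, exactly as in the $\tau$-rigid case.

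First I set up the relevant $\tau$-tilting pair. By Proposition \ref{pro:asai-bijection}, $\langle\Omega\rangle_{\rm tors}$ is functorially finite, so there is a basic $\tau$-tilting pair $\mathcal M=\oplus_{j=1}^n\mathcal M_j$ with $\Fac M=\langle\Omega\rangle_{\rm tors}$. Thus $\Omega$ is the labeling semibrick of $\mathcal M$. By Proposition \ref{pro:asai-brick} (ii) and the uniqueness in Proposition \ref{pro:asai-bijection}, $\Omega$ coincides with the set of labeling bricks $\{C_i\}_{i\in I}$ attached to the left mutations of $\mathcal M$. Hence each brick $C_k$ in $\Omega$ corresponds to a left mutation $\mathcal M'=\mu_k(\mathcal M)$ with new summand $\mathcal M_k'$.

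Next, fix $k\in I$ and set ${\bf r}:={\bf g}_{\mathcal M_k'}$. By Proposition \ref{pro:DWZ-FmFn} and Corollary \ref{cor-F-prod}, the tropical polynomial is additive:
\[F_U[{\bf r}]=\sum_{i\in I}a_iF_{C_i}[{\bf r}],\qquad F_V[{\bf r}]=\sum_{i\in I}b_iF_{C_i}[{\bf r}].\]
By Lemma \ref{lem:multi-brick} (ii), $F_{C_i}[{\bf r}]=0$ for $i\neq k$ and $F_{C_k}[{\bf r}]\neq 0$. Since $F_{C_k}$ has constant term $1$, we get $F_{C_k}[{\bf r}]>0$. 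The two sums therefore reduce to
\[F_U[{\bf r}]=a_kF_{C_k}[{\bf r}],\qquad F_V[{\bf r}]=b_kF_{C_k}[{\bf r}].\]

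Finally, the Newton polytopes coincide, so $F_U=F_V$ as tropical functions by Propositions \ref{lem:Fei-vertex} and \ref{pro:trop-support}. Hence $a_kF_{C_k}[{\bf r}]=b_kF_{C_k}[{\bf r}]$, and dividing by $F_{C_k}[{\bf r}]>0$ gives $a_k=b_k$. Since $k\in I$ was arbitrary, $U\cong V$.

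The only delicate point is the identification of $\supp(U)$ with the labeling semibrick of $\mathcal M$, since that is what lets Lemma \ref{lem:multi-brick} apply to every brick in $\supp(U)$. This needs the injectivity in Proposition \ref{pro:asai-bijection} applied to the two left finite semibricks $\Omega$ and $\{C_i\}_{i\in I}$, which generate the same torsion class.
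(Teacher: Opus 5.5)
Your proof is correct and follows the paper's argument essentially step for step: you identify $\supp(U)=\supp(V)$ with the labeling semibrick of the $\tau$-tilting pair $\mathcal M$ with $\Fac M=\langle U\rangle_{\rm tors}$, then evaluate the tropical $F$-polynomials at ${\bf g}_{\mathcal M_k'}$ for each left mutation and use Lemma \ref{lem:multi-brick} (ii) to isolate $a_k$ and $b_k$. Your explicit appeal to the injectivity in Proposition \ref{pro:asai-bijection} only spells out what the paper leaves implicit when it says the bricks of $\supp(U)$ are indexed by $I$.
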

\begin{proof}
  By Lemma \ref{lem:UV-semibrick}, we know that $\supp (U)=\supp(V)$. We can assume that $\supp (U)$ is non-empty. Otherwise, $U=V=0$ and the desired result holds trivially.

Since $U$ and $V$ are left finite and $\supp (U)=\supp (V)$, we know that there exists a basic $\tau$-tilting pair $\mathcal M=\oplus_{j=1}^n\mathcal M_j=(M,Q)$ such that $\Fac M=\langle U\rangle_{\rm tors}=\langle V\rangle_{\rm tors}$. Since $\supp(U)$ is non-empty, we know that $\{0\}\subsetneq \Fac M$.

Let $I$ be the subset of $[1,n]$ such that $i\in I$ if only if $\mu_i(\mathcal M)$ is a left mutation of $\mathcal M$. By Proposition \ref{pro:asai-brick}, we know that the bricks in $\supp(U)=\supp (V)$ are indexed by $I$. Say  $\supp(U)=\{C_i\}_{i\in I}$. Then we can assume that
 $U=\oplus_{i\in I}C_i^{a_i}$ and $V=\oplus_{i\in I}C_i^{b_i}$, where $a_i, b_i\in\mathbb Z_{\geq 1}$. 

Let $k\in I$. We know that $\mathcal M':=\oplus_{j=1}^n\mathcal M_j'=\mu_k(\mathcal M)$ is a left mutation. By Lemma \ref{lem:multi-brick} (ii), we have
\[ F_{C_k}[{\bf g}_{{\mathcal M}_k'}]\neq 0 \quad \text{and}
\quad F_{C_i}[{\bf g}_{{\mathcal M}_k'}]=0 \quad \forall i\in I\setminus\{k\}.
\]
Since  $U=\oplus_{i\in I}C_i^{a_i}$ and $V=\oplus_{i\in I}C_i^{b_i}$, we have $F_U=\prod_{i\in I}F_{C_i}^{a_i}$ and $F_V=\prod_{i\in I}F_{C_i}^{b_i}$. Then by Corollary \ref{cor-F-prod}, we have
\begin{eqnarray*}
   F_U[{\bf g}_{{\mathcal M}_k'}] =\sum_{i\in I}a_iF_{C_i}[{\bf g}_{{\mathcal M}_k'}]=a_kF_{C_k}[{\bf g}_{{\mathcal M}_k'}]\quad\text{and}\quad
    F_V[{\bf g}_{{\mathcal M}_k'}] &=&\sum_{i\in I}b_iF_{C_i}[{\bf g}_{{\mathcal M}_k'}]=b_kF_{C_k}[{\bf g}_{{\mathcal M}_k'}].
\end{eqnarray*}
Since $U$ and $V$ have the same Newton polytope, we have 
\[a_kF_{C_k}[{\bf g}_{{\mathcal M}_k'}]=F_U[{\bf g}_{{\mathcal M}_k'}]=F_V[{\bf g}_{{\mathcal M}_k'}]  =b_kF_{C_k}[{\bf g}_{{\mathcal M}_k'}].\]
Since $F_{C_k}[{\bf g}_{{\mathcal M}_k'}]\neq 0$, we obtain $a_k=b_k$. As $k$ is an arbitrary element in $I$, we have $a_i=b_i$ for any $i\in I$. Thus $U\cong V$.
\end{proof}

\bibliographystyle{alpha}
\bibliography{myref}

\end{document}